\newtheorem{thm}{Theorem}[section]
\newtheorem{prop}[thm]{Proposition}
\newtheorem{lem}[thm]{Lemma}
\newtheorem{cor}[thm]{Corollary}
\theoremstyle{definition}
\newtheorem{definition}[thm]{Definition}
\theoremstyle{remark}
\newtheorem{remark}[thm]{Remark}
\numberwithin{equation}{section}
\newcommand{\Q}{\mathbb{Q}} % The rational numbers.
\newcommand{\Z}{\mathbb{Z}} % The integers
\newcommand{\C}{\mathbb{C}} % The complex numbers.
\newcommand{\h}{\mathbb{H}} % The upper half plane
\newcommand{\SL}{\mathrm{SL}_2(\Z)} % Modular group
\newcommand{\Gal}{\mathrm{Gal}} %Galois group
\DeclareMathOperator{\re}{Re} % Real part
\begin{document}
	
	\title[Equations in three singular moduli]{Equations in three singular moduli: the equal exponent case}
	\author{Guy Fowler}
	\address{Leibniz Universit\"{a}t Hannover, Institut für Algebra, Zahlentheorie und Diskrete Mathematik, Welfengarten 1, 30167 Hannover, Germany.}
	\email{\href{mailto:fowler@math.uni-hannover.de}{fowler@math.uni-hannover.de}}
	\urladdr{\url{https://www.guyfowler.uk/}}
	\date{\today}
	\thanks{\textit{Acknowledgements}: The author would like to thank Jonathan Pila for helpful comments and the referee for a careful reading of this paper. The author has received funding from the European Research Council (ERC) under the European Union’s Horizon 2020 research and innovation programme (grant agreement No. 945714).}
	
	\begin{abstract}
	Let $a \in \Z_{>0}$ and $\epsilon_1, \epsilon_2, \epsilon_3 \in \{\pm 1\}$. We classify explicitly all singular moduli $x_1, x_2, x_3$ satisfying either $\epsilon_1 x_1^a + \epsilon_2 x_2^a + \epsilon_3 x_3^a \in \Q$ or $(x_1^{\epsilon_1} x_2^{\epsilon_2} x_3^{\epsilon_3})^{a} \in \Q^{\times}$. In particular, we show that all the solutions in singular moduli $x_1, x_2, x_3$ to the Fermat equations $x_1^a + x_2^a + x_3^a= 0$ and $x_1^a + x_2^a - x_3^a= 0$ satisfy $x_1 x_2 x_3 = 0$. Our proofs use a generalisation of a result of Faye and Riffaut on the fields generated by sums and products of two singular moduli, which we also establish.
	\end{abstract}
	
	\maketitle

\section{Introduction}\label{sec:intro}

Denote by $\h$ the complex upper half plane. Let $j \colon \h \to \C$ be the modular $j$-function. A singular modulus is a complex number $j(\tau)$, where $\tau \in \h$ is such that $[\Q(\tau) : \Q]=2$. Equivalently, a singular modulus is the $j$-invariant of an elliptic curve (over $\C$) with complex multiplication. Singular moduli are algebraic integers which generate the ring class fields of imaginary quadratic fields. For background, see e.g. \cite[Chapter~3]{Cox89}.

Let $n \in \Z_{>0}$. A CM-point of $\C^n$ is a point $(\sigma_1, \ldots, \sigma_n) \in \C^n$ such that each $\sigma_i$ is a singular modulus. A special subvariety of $\C^n$ is an irreducible component of some subvariety of $\C^n$ which is defined by equations of the form $x_i = \sigma$ and $\Phi_N(x_l, x_k)=0$, where $\sigma$ is a singular modulus and $N \in \Z_{>0}$. Here $\Phi_N$ denotes the $N$th classical modular polynomial (see \cite[\S11]{Cox89}). In particular, a CM-point is a zero-dimensional special subvariety.

Let $V \subset \C^n$ be a subvariety. The Andr\'e--Oort conjecture for $\C^n$, which was proved by Pila \cite{Pila11} (see Pila's paper and the references therein for background on the conjecture), states that $V$ contains only finitely many maximal special subvarieties. In particular, $V$ contains only finitely many CM-points which do not lie on positive-dimensional special subvarieties of $V$. Pila's theorem is ineffective; it does not provide an effective means of finding all the special subvarieties of a given $V$. For $n=2$, an effective version of the Andr\'e--Oort conjecture was proved by K\"uhne \cite{Kuhne12, Kuhne13} and Bilu, Masser, and Zannier \cite{BiluMasserZannier13} (see also Andr\'e's earlier ineffective proof \cite{Andre98} of the $n=2$ case). For $n > 2$, effective forms of the Andr\'e--Oort conjecture are known only for some restricted classes of subvarieties \cite{BiluKuhne20, Binyamini19}.

Alongside this work, there have also been a number of results which classify explicitly all the CM-points lying on particular families of algebraic curves contained in $\C^2$, e.g. \cite{AllombertBiluMadariaga15, BiluLucaMadariaga16, BiluLucaMadariaga20, LucaRiffaut19, Riffaut19}. In this article, we prove the following two results, which consider instead two families of algebraic surfaces in $\C^3$. 

\begin{thm}\label{thm:sum}
	Let $a \in \Z_{>0}$ and $\epsilon_1, \epsilon_2, \epsilon_3 \in \{\pm 1\}$. Let $x_1, x_2, x_3$ be singular moduli. Then 
	\[ \epsilon_1 x_1^a + \epsilon_2 x_2^a + \epsilon_3 x_3^a \in \Q \]
	if and only if one of the following holds:
	\begin{enumerate}
		\item $x_1, x_2, x_3 \in \Q$;
		\item some $x_i \in \Q$ and, for the remaining $x_j, x_k$, one has that $x_j = x_k$ and $\epsilon_j + \epsilon_k =0$;
		\item some $x_i \in \Q$ and the remaining $x_j, x_k$ are distinct, of degree $2$, and conjugate over $\Q$ with $\epsilon_j = \epsilon_k$;
		\item $\epsilon_1 = \epsilon_2 = \epsilon_3$ and $x_1, x_2, x_3$ are pairwise distinct, of degree $3$, and conjugate over $\Q$.
	\end{enumerate}
\end{thm}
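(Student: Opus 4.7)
The if direction is a direct verification: case (1) is trivial; case (2) yields $\epsilon_j x_j^a + \epsilon_k x_k^a = 0$; case (3) yields $\epsilon_j(x_j^a + x_k^a) = \epsilon_j \mathrm{Tr}_{\Q(x_j)/\Q}(x_j^a) \in \Q$; and case (4) yields $\epsilon_1 \mathrm{Tr}_{\Q(x_1)/\Q}(x_1^a) \in \Q$.

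For the only if direction, assume $S := \epsilon_1 x_1^a + \epsilon_2 x_2^a + \epsilon_3 x_3^a \in \Q$. The backbone of the argument is the generalised Faye--Riffaut result announced in the abstract, which I expect to describe, for singular moduli $y, z$, the field $\Q(\epsilon y^a + \eta z^a)$: it should equal $\Q(y, z)$ outside an explicit short exceptional list, combining the original Faye--Riffaut exceptions with new ones arising from collisions $x \leftrightarrow \zeta_a x$ among $a$-th powers. Rewriting $S = r$ as $\epsilon_1 x_1^a + \epsilon_2 x_2^a = r - \epsilon_3 x_3^a$ gives $\Q(\epsilon_1 x_1^a + \epsilon_2 x_2^a) \subseteq \Q(x_3)$, and similarly for the other two pairings; then split by how many of the $x_i$ are rational.

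If some $x_i$ is rational, say $x_3 \in \Q$, the problem reduces to the two-variable identity $\epsilon_1 x_1^a + \epsilon_2 x_2^a \in \Q$. Galois-conjugating this identity and applying the two-variable version of the generalised Faye--Riffaut result yields exactly the three options in (1)--(3): either $x_1, x_2 \in \Q$; or $x_1 = x_2$ with $\epsilon_1 + \epsilon_2 = 0$; or $x_1, x_2$ are distinct degree-$2$ conjugates with $\epsilon_1 = \epsilon_2$. If none of the $x_i$ is rational, the Faye--Riffaut generalisation upgrades each inclusion $\Q(\epsilon_j x_j^a + \epsilon_k x_k^a) \subseteq \Q(x_i)$ to $\Q(x_j, x_k) \subseteq \Q(x_i)$, and so $\Q(x_1) = \Q(x_2) = \Q(x_3)$. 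Combined with the known classification of pairs of singular moduli generating the same number field, this forces the three $x_i$ to have a common discriminant $\Delta$, hence to lie in a single $\Gal(\overline{\Q}/\Q)$-orbit of size $h = h(\Delta)$. The standard estimate that exactly one Galois conjugate of a singular modulus of discriminant $\Delta$ has modulus comparable to $e^{\pi\sqrt{|\Delta|}}$ (the others being bounded by an absolute constant) lets one choose, for $h \geq 4$, some $\sigma \in \Gal(\overline{\Q}/\Q)$ sending $x_1$ to the dominant conjugate while sending $x_2, x_3$ elsewhere; the resulting $\sigma(S)$ then has one absolutely dominant term and cannot equal $r$. The cases $h = 1, 2$ are ruled out by the no-$x_i$-rational hypothesis together with a short linear-algebra argument on the non-trivial Galois involution, leaving $h = 3$ with $x_1, x_2, x_3$ a permutation of the three conjugates. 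Averaging the three cyclic Galois translates of $S = r$ against the trace of $x_1^a$ then forces $\epsilon_1 = \epsilon_2 = \epsilon_3$ (any mixed sign pattern $+,+,-$ leads, via subtracting pairs of the translated equations, to all three conjugates having the same $a$-th power, contradicting the existence of a dominant conjugate), and we land in case (4).

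The main obstacles will be: (i) establishing the generalised Faye--Riffaut statement for $a$-th power combinations with a workable exceptional list, which is the new ingredient the author highlights in the abstract and which must handle the collision phenomenon $x^a = (\zeta_a x)^a$; and (ii) discharging the small-discriminant case in the $h \geq 4$ argument, where the asymptotic dominance bound $|x^*| \approx e^{\pi\sqrt{|\Delta|}}$ is not yet sharp enough and a finite computation across the relevant class groups is required.
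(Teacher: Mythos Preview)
Your outline has two genuine gaps that the paper's proof has to work hard to fill.

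First, the step ``the Faye--Riffaut generalisation upgrades each inclusion $\Q(\epsilon_j x_j^a + \epsilon_k x_k^a) \subseteq \Q(x_i)$ to $\Q(x_j, x_k) \subseteq \Q(x_i)$, and so $\Q(x_1)=\Q(x_2)=\Q(x_3)$'' is not correct as stated. The generalised Faye--Riffaut theorem (Theorem~\ref{thm:fieldsumpower} in the paper) has an exception: when $\Delta_j=\Delta_k$ and the signs agree, one only gets $[\Q(x_j,x_k):\Q(x_j^a+x_k^a)]\leq 2$. Consequently the three inclusions combine not to force equal fields, but only to force $h_1=h_2=h_3$, or $h_1=h_2=2h_3$, or $h_1=2h_2=2h_3$ (after ordering). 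The paper devotes substantial effort to the middle case $\Q(x_1)=\Q(x_2)\supsetneq\Q(x_3)$ with $[\Q(x_1):\Q(x_3)]=2$ and $\Delta_1=\Delta_2$, invoking the classification of index-$2$ inclusions of ring class fields (Lemmas~\ref{lem:subfieldsamefund} and~\ref{lem:subfielddiffund}) and case-by-case estimates; your outline does not see this branch at all. (Your worry about ``collisions $x\leftrightarrow\zeta_a x$'' is a red herring: Proposition~\ref{prop:fieldpower} gives $\Q(x^a)=\Q(x)$ directly, and the actual exception in Faye--Riffaut is the same-discriminant symmetric one already present at $a=1$.)

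Second, even in the equal-field case $\Q(x_1)=\Q(x_2)=\Q(x_3)$, the assertion that ``the known classification\ldots forces the three $x_i$ to have a common discriminant'' is false. Lemma~\ref{lem:samefield} allows $\Delta_i/\Delta_j\in\{1,4,1/4\}$ when the fundamental discriminants agree, and a finite but nontrivial table of possibilities when they do not. The paper has to run the dominance argument separately in each of these configurations (e.g.\ $\Delta_1=4\Delta_2=\Delta_3$, $\Delta_1=\Delta_2=4\Delta_3$, etc.), and for the tabulated unequal-fundamental-discriminant cases it resorts to PARI verification of explicit inequalities. Your sketch of the $h\geq 4$ dominance argument is fine for the equal-discriminant subcase, but the non-dominant conjugates are \emph{not} bounded by an absolute constant---they can be as large as roughly $e^{\pi|\Delta|^{1/2}/2}$---so the estimates must be done with the discriminant still in play, and the mixed-discriminant subcases each require their own version.
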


\begin{thm}\label{thm:prod}
	Let $x_1, x_2, x_3$ be singular moduli and $a \in \Z \setminus \{0\}$. Then 
	\[(x_1 x_2 x_3)^a \in \Q^\times\]
	if and only if one of the following holds:
	\begin{enumerate}
		\item $x_1, x_2, x_3 \in \Q^\times$;
		\item some $x_i \in \Q^\times$ and the remaining $x_j, x_k$ are distinct, of degree $2$, and conjugate over $\Q$;
		\item $x_1, x_2, x_3$ are pairwise distinct, of degree $3$, and conjugate over $\Q$.
	\end{enumerate}
\end{thm}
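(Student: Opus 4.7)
The plan for the ``if'' direction is a direct verification. In case~(1) the product $x_1 x_2 x_3$ lies in $\Q^\times$ trivially; in case~(2) one has $x_j x_k = N_{\Q(x_j)/\Q}(x_j) \in \Q^\times$ for the non-rational pair, and multiplication by the rational $x_i$ preserves rationality; in case~(3) the product equals $N_{\Q(x_1)/\Q}(x_1) \in \Q^\times$. Raising to the $a$-th power preserves membership in $\Q^\times$.

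For the converse, I set $y := x_1 x_2 x_3$. Since $y^a \in \Q^\times$ and $a \neq 0$, $y \neq 0$ and each $x_i$ is nonzero, while every $\sigma \in \Gal(\overline{\Q}/\Q)$ satisfies $\sigma(y) = \zeta_\sigma y$ for some $a$-th root of unity $\zeta_\sigma$. Taking absolute values gives
\[ |\sigma(x_1) \sigma(x_2) \sigma(x_3)| = |x_1 x_2 x_3| \]
for every $\sigma$. Since $\sigma(x_i)$ ranges over the singular moduli of the same discriminant $D_i$ as $x_i$, and the singular moduli of a discriminant $D$ admit exactly one dominant conjugate of absolute value $\approx \exp(\pi \sqrt{|D|})$ with all others exponentially smaller, this identity imposes severe combinatorial restrictions on $D_1, D_2, D_3$ and on the Galois-orbit structure of $(x_1, x_2, x_3)$.

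Next, the proof splits according to how many $x_i$ are rational. If all three are, case~(1) follows. If exactly one, say $x_1$, is rational, the absolute-value identity restricted to the factor $x_2 x_3$, combined with the paper's promised generalisation of the Faye--Riffaut theorem on the fields $\Q(x_j x_k)$, forces $x_2 x_3 \in \Q^\times$; then $x_2, x_3$ must be two distinct conjugates of a degree-$2$ singular modulus, yielding case~(2). If none of the $x_i$ is rational, the size constraint forces the discriminants $D_i$ to coincide and $x_1, x_2, x_3$ to lie in a single Galois orbit of $h(D_1)$ elements; the generalised Faye--Riffaut statement then forces $y \in \Q^\times$ and hence $h(D_1) = 3$, giving case~(3). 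In every scenario $y \in \Q^\times$, and the possibility of nontrivial $\zeta_\sigma$ is ruled out by checking that the small ring class fields which could contain higher-order roots of unity yield no new configurations.

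I anticipate the principal obstacle to be the middle step: extracting effective bounds on the discriminants $D_i$ from the single archimedean identity, and combining them with the Faye--Riffaut input to control both the large-$|D_i|$ regime via Bilu--Masser--Zannier-type estimates and the finitely many small-discriminant cases that must be eliminated explicitly. Producing a version of Faye--Riffaut strong enough to handle these residual cases uniformly is the technical heart of the argument.
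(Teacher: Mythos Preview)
Your overall strategy matches the paper's: the ``if'' direction is trivial, and for the converse you correctly isolate the archimedean identity $\lvert \sigma(x_1)\sigma(x_2)\sigma(x_3)\rvert = \lvert A\rvert^{1/a}$, invoke the generalised Faye--Riffaut theorem (the paper's Theorem~\ref{thm:fieldprodpower}), and anticipate a finite residual check. The case split by rationality is also how the paper begins. However, there are two genuine gaps.

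First, you omit the case where the $x_i$ are not pairwise distinct. If, say, $x_1=x_2\neq x_3$ with $x_1\notin\Q$, this does not fall under any of your three headings; the paper disposes of it by a direct appeal to Riffaut's Theorem~\ref{thm:prod2} applied to $x_1^{2a}x_3^{a}\in\Q^\times$.

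Second, and more seriously, your claim in the ``none rational'' case that ``the size constraint forces the discriminants $D_i$ to coincide and $x_1,x_2,x_3$ to lie in a single Galois orbit'' is not justified and is not how the argument actually proceeds. The archimedean identity alone does not rule out, for instance, $\Delta_1=4\Delta_2$ or the configurations coming from \cite[Table~2]{AllombertBiluMadariaga15}. What the paper does instead is use Theorem~\ref{thm:fieldprodpower} to deduce $\Q(x_i)=\Q(x_j^a x_k^a)$, which yields the trichotomy
\[
h_1=h_2=h_3,\qquad h_1=h_2=2h_3,\qquad h_1=2h_2=2h_3,
\]
and then combines this with the field-equality Lemmas~\ref{lem:samefield}--\ref{lem:subfielddiffund} to reduce to an explicit finite list of triples $(\Delta_1,\Delta_2,\Delta_3)$ (exactly the list obtained in \cite{Fowler20}). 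Each surviving triple is then eliminated in PARI by checking that $x_1x_2x_3/\sigma(x_1)\sigma(x_2)\sigma(x_3)$ is never a root of unity in the relevant Galois closure. Your final paragraph acknowledges that effective bounds plus a small-discriminant check are needed, but the intermediate structural step---the class-number trichotomy and the associated casework on $\Delta_i/\Delta_j$---is missing from your outline, and without it one cannot pass from the archimedean identity to ``discriminants coincide''. Likewise, even when the discriminants do coincide, Theorem~\ref{thm:fieldprodpower} only gives $[\Q(x_1,x_2):\Q(x_1^a x_2^a)]\le 2$, not $y\in\Q^\times$ directly; the conclusion $h(\Delta)=3$ again requires the size argument to exclude $h\ge 4$.
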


Since $0=j(e^{2\pi i /3})$ is a singular modulus, we exclude the case of product $0$ in Theorem~\ref{thm:prod}. There are $13$ rational singular moduli (including $0$), $58$ singular moduli of degree $2$, and $75$ singular moduli of degree $3$. The list of these may be computed straightforwardly in PARI \cite{PARI2}. Theorems~\ref{thm:sum} and \ref{thm:prod} are thus completely explicit. The $a=1$ case of Theorem~\ref{thm:prod} was proved previously in the author's paper \cite{Fowler20}. To our knowledge, this is the only other explicit result about CM-points lying on some algebraic surfaces in $\C^3$ to have appeared previously.

We prove the following corollary of Theorem~\ref{thm:sum}, which gives a ``Fermat's last theorem'' for singular moduli.

\begin{cor}\label{cor:sum}
	Let $x_1, x_2, x_3$ be singular moduli. Suppose that there exists $a \in \Z_{>0}$ such that either $x_1^a + x_2^a + x_3^a =0$ or $x_1^a + x_2^a = x_3^a$. Then $x_1 x_2 x_3 = 0$.
\end{cor}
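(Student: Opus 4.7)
The plan is to apply Theorem~\ref{thm:sum} with $(\epsilon_1,\epsilon_2,\epsilon_3) = (1,1,1)$ for the equation $x_1^a + x_2^a + x_3^a = 0$ and with $(1,1,-1)$ for $x_1^a + x_2^a = x_3^a$. In each case the sum is $0 \in \Q$, so one of conclusions~(1)--(4) must hold, and the goal is to show that each either forces $x_1 x_2 x_3 = 0$ or contradicts the assumed equation.

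Several sub-cases dispatch quickly. Case~(2), which requires some $\epsilon_j + \epsilon_k = 0$, is vacuous for the first equation; for the second it forces the rational variable to vanish. Case~(4), which requires all $\epsilon_i$ equal, pertains only to the first equation and reduces to the statement that $\mathrm{Tr}_{\Q(x_1)/\Q}(x_1^a) = 0$ for the Galois orbit of a degree-$3$ singular modulus. Case~(3) reduces, after relabelling, to showing $c^a = \mp\,\mathrm{Tr}_{\Q(\sigma)/\Q}(\sigma^a)$ for every $a \in \Z_{>0}$, every nonzero rational singular modulus $c$, and every degree-$2$ singular modulus $\sigma$. Case~(1), with all three $x_i$ rational, is a finite check on the $13$ explicitly known rational singular moduli.

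The substantive work, and the main obstacle, is the uniform-in-$a$ analysis of Cases~(3) and~(4). My approach is Archimedean. In Case~(3), the conjugates $\sigma, \sigma'$ lie in a real quadratic field, since the class group of any class-number-$2$ order is $2$-torsion, so $\Q(\sigma)$ equals the totally real subfield of the corresponding ring class field. Moreover $|\sigma| \neq |\sigma'|$ (their trace, a computable integer, being nonzero), so $|\sigma^a + \sigma'^a|$ grows like $\max(|\sigma|,|\sigma'|)^a$; since $|c|$ is rational while $|\sigma|$ is irrational, the relation $|c|^a = |\sigma^a + \sigma'^a|$ can hold for at most finitely many explicit $a$. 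In Case~(4), the orbit of a degree-$3$ singular modulus has signature $(1,1)$ --- one real conjugate $r$ and one complex-conjugate pair $\gamma, \bar\gamma$, since $\mathrm{Cl}(\mathcal{O}) = \Z/3\Z$ has odd order and complex conjugation acts on it by inversion. The trace equals $r^a + 2|\gamma|^a \cos(a\arg\gamma)$, and comparing $|r|$ with $|\gamma|$ (together with an estimate on how close $a\arg\gamma$ can come to $\pi/2 + k\pi$ when $|r| < |\gamma|$) again reduces each orbit to a finite explicit check. The remaining computations, for the $29$ degree-$2$ orbits, the $25$ degree-$3$ orbits, and the $12$ nonzero rational singular moduli, may then be carried out in PARI as mentioned after Theorem~\ref{thm:prod}.
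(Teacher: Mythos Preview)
Your overall strategy---apply Theorem~\ref{thm:sum} and dispatch cases (1)--(4)---is exactly what the paper does, but your treatment of two of the cases has problems.

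\textbf{Case (1) is not a finite check.} You write that the rational case is ``a finite check on the $13$ explicitly known rational singular moduli'', but the statement quantifies over all $a \in \Z_{>0}$, so for each triple you still face infinitely many equations. You explicitly flag the uniform-in-$a$ difficulty for Cases~(3) and~(4), yet it is equally present here. The paper closes this gap by invoking Fermat's Last Theorem: for $a \geq 3$ there are no nonzero integer solutions to $x^a + y^a = z^a$, which (after sign changes when $a$ is odd, and a trivial positivity argument when $a$ is even) reduces both equations to $a \leq 2$, and then a genuine finite check in PARI finishes. You could alternatively run an Archimedean argument here as well (the twelve nonzero rational singular moduli have pairwise distinct absolute values, so the largest term eventually dominates), but you must say something.

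\textbf{Case (4) is easier than you think, and the paper's argument is uniform.} Your worry about estimating how close $a\arg\gamma$ can come to $\pi/2 + k\pi$ when $|r| < |\gamma|$ is a genuine Diophantine obstacle---but this case never occurs. The dominant singular modulus of any discriminant is always real (see Subsection~\ref{subsec:bd}), and by Proposition~\ref{prop:dom} every other conjugate has absolute value at most $0.1$ times the dominant one. So for a degree-$3$ orbit the real root $r$ is dominant and $|\gamma| \leq 0.1\,|r|$. The paper's proof of Case~(4) is then one line: with $x_1$ dominant,
\[
|x_1^a + x_2^a + x_3^a| \geq |x_1|^a - 2(0.1\,|x_1|)^a = (1 - 2\cdot 0.1^a)|x_1|^a > 0,
\]
uniformly in $a$. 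No signature analysis, no argument about $\arg\gamma$, no finite range of $a$ to check.

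The same dominant-conjugate inequality also streamlines Case~(3): taking $x_1$ dominant among the degree-$2$ pair, one gets $0.9\,|x_1| \leq |x_3| \leq |x_1| + |x_2|$ for the rational $x_3$, independent of $a$, and a single PARI check over the $29$ orbits and $12$ rational values finishes. Your growth argument works too, but it requires computing an $a$-cutoff for each of the $29 \times 12$ pairs and then checking all smaller $a$; the paper's bound is uniform and checks a single inequality per pair.
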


We also obtain the following generalisation of Theorem~\ref{thm:prod}. To state it, we need a definition. Weinberger \cite[Theorem~1]{Weinberger73} proved that there is at most one quadratic imaginary field $K$ such that the class group of $K$ is isomorphic to $(\Z / 2 \Z)^n$ for some $n \in \Z_{>0}$ and the fundamental discriminant $D_K$ of $K$ satisfies $\lvert D_K \rvert > 5460$.

\begin{definition}\label{def:tat}
	Let $K_*$ be the single exceptional quadratic imaginary field which has class group isomorphic to $(\Z / 2 \Z)^n$ for some $n \in \Z_{>0}$ and fundamental discriminant $D_*$ such that $\lvert D_* \rvert > 5460$, if such a field exists.
\end{definition}

Our result is then as follows.

\begin{thm}\label{thm:quot}
	Let $a \in \Z \setminus \{0\}$ and $\epsilon_1, \epsilon_2, \epsilon_3 \in \{\pm 1\}$. Let $x_1, x_2, x_3$ be singular moduli. If
	\[ (x_1^{\epsilon_1} x_2^{\epsilon_2} x_3^{\epsilon_3})^a \in \Q^\times, \]
	then one of the following holds:
	\begin{enumerate}
		\item $x_1, x_2, x_3 \in \Q^\times$;
		\item some $x_i \in \Q^\times$ and, for the remaining $x_j, x_k$, one has that $x_j = x_k$ and $\epsilon_j + \epsilon_k =0$;
		\item some $x_i \in \Q^\times$ and the remaining $x_j, x_k$ are distinct, of degree $2$, and conjugate over $\Q$ with $\epsilon_j = \epsilon_k$;
		\item $\epsilon_1 = \epsilon_2 = \epsilon_3$ and $x_1, x_2, x_3$ are pairwise distinct, of degree $3$, and conjugate over $\Q$;
		\item all of the following hold:
		\begin{enumerate}
			\item some $x_i, x_j$ are distinct and conjugate over $\Q$;
			\item $x_i = j(\nu)$ for some $\nu \in \h$ such that $\Q(\nu) \neq K_*$;
			\item the remaining $x_k$ is equal to $j(\tau)$ for some $\tau \in \h$ such that $\Q(\tau) = K_*$;
			\item $\epsilon_i=\epsilon_j=-\epsilon_k$;
			\item $\Q(x_k) \subset \Q(x_i)= \Q(x_j)$;
			\item $[\Q(x_i) : \Q(x_k)]=2$ and $[\Q(x_k) : \Q] \geq 128$.
		\end{enumerate}
	\end{enumerate}
\end{thm}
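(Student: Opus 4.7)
The plan is to adapt the argument used for Theorem~\ref{thm:prod} to the signed setting, while isolating the exceptional quadratic field $K_*$ where the two-variable generalization of the Faye--Riffaut theorem (promised in the abstract) fails to be sharp. Set $P := x_1^{\epsilon_1} x_2^{\epsilon_2} x_3^{\epsilon_3}$; the hypothesis $P^a \in \Q^\times$ forces each $x_i \neq 0$ and ensures that $\sigma(P)/P$ is an $a$-th root of unity for every $\sigma \in \Gal(\overline{\Q}/\Q)$. In particular $|\sigma(P)| = |P|$, which translates to
\[ \epsilon_1 \log|x_1| + \epsilon_2 \log|x_2| + \epsilon_3 \log|x_3| = \epsilon_1 \log|\sigma(x_1)| + \epsilon_2 \log|\sigma(x_2)| + \epsilon_3 \log|\sigma(x_3)| \]
for every $\sigma$, an identity that will drive the rest of the proof.

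First I would dispose of the rational subcases. If all three $x_i \in \Q^\times$ we are in case~(1); if exactly two are rational, so is the third; and if exactly one $x_i \in \Q^\times$, the problem reduces to the two-variable equation $(x_j^{\epsilon_j} x_k^{\epsilon_k})^a \in \Q^\times$ settled by the two-variable generalization of Faye--Riffaut, producing cases~(2) and~(3). Now assume that none of $x_1, x_2, x_3$ is rational. I would apply the two-variable theorem to each pair $\{x_i, x_j\}$: outside a short exceptional list, $\Q(x_i^{\epsilon_i} x_j^{\epsilon_j})$ coincides with $\Q(x_i, x_j)$, so the pairwise products or quotients determine the pairs themselves up to the global Galois action. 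Combined with the standard estimate $\log|j(\tau)| = 2\pi\,\mathrm{Im}(\tau) + O(1)$ on the fundamental domain applied to the dominant Galois conjugates, this forces the three singular moduli to share a common CM field; a discriminant and Galois-orbit size comparison mirroring the argument for Theorem~\ref{thm:prod} then places us in cases~(3) or~(4). The sign constraint in case~(4) arises because three pairwise distinct degree-$3$ conjugates with mixed signs cannot be compatible with the above identity for all $\sigma$ acting transitively on the triple.

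The main obstacle is case~(5), which originates from the only essentially non-sharp configuration in the two-variable theorem: a pair of distinct $\Q$-conjugate singular moduli $x_i, x_j$ whose common field $\Q(x_i) = \Q(x_j)$ strictly contains a third singular modulus $x_k$. By class field theory, for $x_i, x_j$ with CM by a quadratic imaginary field $K$, the extension $\Q(x_i)/\Q$ is generically non-abelian (dihedral), so it can contain another singular modulus generating a strict subfield only via the exceptional abelian configuration that Weinberger isolates, namely when $x_k$ has CM by $K_*$ with $\mathrm{Cl}(K_*) \cong (\Z/2\Z)^n$ (Definition~\ref{def:tat}). Writing $P^a = (x_i x_j / x_k)^{\pm a}$ (which requires $\epsilon_i = \epsilon_j = -\epsilon_k$) and noting that $x_i x_j$ is the norm from $\Q(x_i)$ to its unique index-$2$ subfield, that subfield must coincide with $\Q(x_k)$, yielding conditions (a)--(f). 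The explicit lower bound $[\Q(x_k):\Q] \geq 128$ is then extracted by combining Weinberger's bound $|D_*| > 5460$ with genus theory (which pins $h(K_*)$ to a power of $2$ determined by the number of prime divisors of $D_*$) and the index-$2$ constraint coming from (f); ruling out small exponents $n$ by these numerical inputs is the most delicate part and I expect the main obstacle of the proof.
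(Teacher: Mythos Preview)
Your reduction to the two-variable result and the observation $|\sigma(P)|=|P|$ are correct starting points, but your account of where case~(5) comes from, and how the bound $128$ arises, is off in ways that would make the argument fail. The index-$2$ situation $\Q(x_i)=\Q(x_j)\supsetneq\Q(x_k)$ is \emph{not} a failure of the two-variable primitive element theorem: that theorem works cleanly here and is exactly what produces the containment and the index bound $[\Q(x_i):\Q(x_k)]\le 2$. What you then need is a classification of when one ring class field sits inside another with index~$2$, and this splits into two genuinely different subcases according to whether $\Q(\sqrt{\Delta_i})=\Q(\sqrt{\Delta_k})$ or not. In the equal-fundamental-field subcase (which you omit entirely) one gets $\Delta_i\in\{9\Delta_k/4,\,4\Delta_k,\,9\Delta_k,\,16\Delta_k\}$, and each of these requires its own archimedean bounding argument plus a finite computer search. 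In the unequal-fundamental-field subcase the relevant input is the Allombert--Bilu--Madariaga classification of discriminants $\Delta$ for which $\Q(x)/\Q$ is Galois: either $\Delta$ is in a finite explicit table, or $h(\Delta)\ge 128$ and then (by Weinberger) $\Q(\sqrt{\Delta})=K_*$. The $128$ is thus a threshold separating the tabulated $2$-elementary class groups from the hypothetical $K_*$; it does not come from pushing $|D_*|>5460$ through genus theory as you suggest (a discriminant just above $5460$ can have very few prime factors).

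Two further points. Your claim that the archimedean estimate ``forces the three singular moduli to share a common CM field'' is false: the proof must and does handle $\Q(\sqrt{\Delta_1})\neq\Q(\sqrt{\Delta_3})$ throughout, both in the equal-field case (via Lemma~\ref{lem:samefield} and the finite table there) and in the index-$2$ case. And the norm argument you sketch (``$x_ix_j$ is the norm to the unique index-$2$ subfield'') does not work as stated: $\Q(x_i)$ typically has many index-$2$ subfields, and $x_i,x_j$ need not be conjugate over $\Q(x_k)$ a priori---this has to be deduced, and in any event the paper does not use a norm argument here but rather the field-classification lemmas together with explicit archimedean bounds and a PARI elimination of a finite residual list.
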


It is possible that the field $K_*$ does not exist. For example, the Generalised Riemann Hypothesis implies that there is no such field $K_*$. In this case, Theorem~\ref{thm:quot} would hold with case (5) removed, and would evidently then be an ``if and only if''.

The plan of this article is as follows. In Section~\ref{sec:background}, we recall those results about singular moduli which we will need. Section~\ref{sec:prim} contains some primitive element theorems for singular moduli, which will be central to our proofs. These generalise results of Faye and Riffaut \cite{FayeRiffaut18}. Theorem~\ref{thm:sum} is proved in Sections~\ref{sec:sum} and \ref{sec:diff}. Corollary~\ref{cor:sum} is proved in Section~\ref{sec:fermat}. Theorem~\ref{thm:prod} is then proved in Section~\ref{sec:prod}, and finally the proof of Theorem~\ref{thm:quot} is contained in Section~\ref{sec:quot}. Our proofs make extensive use of computations in PARI \cite{PARI2}; scripts are available from \url{https://github.com/guyfowler/three_singular_moduli}.

While this article was under review, Bilu, Gun, and Tron \cite[Theorem~1.1]{BiluGunTron22} proved a result related to Theorem~\ref{thm:quot}. They established explicit bounds on the triples $(x_1, x_2, x_3)$ of distinct singular moduli such that $x_1^a x_2^b x_3^c \in \Q^\times$ for some $a,b,c \in \Z \setminus \{0\}$. They list all the known examples of such triples $(x_1, x_2, x_3)$, but currently their bounds do not suffice to show that these are the only examples. 

In the particular case of Theorem~\ref{thm:quot}, a result of Bilu, Gun, and Tron \cite[Corollary~2.12]{BiluGunTron22} implies that case (5) of Theorem~\ref{thm:quot} cannot occur. Thus, Theorem~\ref{thm:quot} holds with case (5) removed and is thus evidently an ``if and only if''. One therefore obtains a complete list of those triples $(x_1, x_2, x_3)$ of singular moduli such that $x_1^a x_2^b x_3^c \in \Q^\times$ for some $a, b, c \in \Z\setminus \{0\}$ with $\lvert a \rvert = \lvert b \rvert = \lvert c \rvert$. 

\section{Background}\label{sec:background}

\subsection{Basic properties of singular moduli}\label{subsec:singmod}

For background on singular moduli, the reader is referred to e.g. \cite[Chapter~3]{Cox89}. We will use the properties of singular moduli which are stated in this section throughout this article, often without special reference.

Let $x = j(\tau)$ be a singular modulus, so that $\tau \in \h$ is such that $[\Q(\tau) : \Q] = 2$. The discriminant $\Delta$ of $x$ is defined by $\Delta = b^2 - 4 a c$, where $(a,b,c) \in \Z^3 \setminus \{(0,0,0)\}$ are such that $a \tau^2 + b \tau + c = 0$ and $\gcd(a,b,c)=1$. Observe that $\Delta<0$ and $\Delta \equiv 0, 1 \bmod 4$. We may write $\Delta = f^2 D$ for some unique $f \in \Z_{>0}$, where $D$ is the (fundamental) discriminant of the imaginary quadratic field $\Q(\sqrt{\Delta}) = \Q(\tau)$.

Let $x_1, \dots, x_n$ be the list of all the singular moduli of discriminant $\Delta$. The Hilbert class polynomial $H_\Delta(z)$ is defined to be
\[H_\Delta(z) = (z- x_1) \cdots (z-x_n).\] 
Remarkably, $H_\Delta(z)$ is a monic polynomial with coefficients in $\Z$ which is irreducible over $\Q$ and over $\Q(\sqrt{\Delta})$. In particular, every singular modulus is an algebraic integer. Further, the singular moduli of a given discriminant $\Delta$ form a complete set of Galois conjugates both over $\Q$ and over $\Q(\sqrt{\Delta})$.

The number of singular moduli of a given discriminant $\Delta$ is equal to $h(\Delta)$, the class number of the (unique) imaginary quadratic order of discriminant $\Delta$. In particular, for $x$ a singular modulus of discriminant $\Delta$, one thus has that
\[ h(\Delta) = [\Q(x) : \Q] = [ \Q(\sqrt{\Delta}, x) : \Q(\sqrt{\Delta})].\]
The field $\Q(\sqrt{\Delta}, x)$ is the ring class field (see \cite[\S9]{Cox89}) of the imaginary quadratic order of discriminant $\Delta$. The extension $\Q(\sqrt{\Delta}, x) / \Q$ is always a Galois extension \cite[\S3.2]{AllombertBiluMadariaga15}.

The singular moduli of a given discriminant $\Delta$ may be explicitly described \cite[Proposition~2.5]{BiluLucaMadariaga16}. Write $T_\Delta$ for the set of triples $(a,b,c) \in \Z^3$ such that: $\gcd(a,b,c)=1$, $\Delta = b^2-4ac$, and either $-a < b \leq a < c$ or $0 \leq b \leq a = c$. Then there is a bijection, given by $(a,b,c) \mapsto j((b + \sqrt{\Delta})/2a)$, between $T_\Delta$ and the singular moduli of discriminant $\Delta$. This description follows from the definition of the discriminant $\Delta$ and the fact that the $j$-function restricts to a bijection $F_j \to \C$, where $F_j$ is the usual fundamental domain for the action of $\SL$ on $\h$, i.e.
\[ F_j = \{ z \in \h : -\frac{1}{2} < \re z \leq \frac{1}{2}, \lvert z \rvert \geq 1, \mbox{ and } \lvert z \rvert > 1 \mbox{ if } \re z < 0 \}.\]
Observe that $(b + \sqrt{\Delta})/2a \in F_j$ for every $(a,b,c) \in T_\Delta$.

We require the following elementary lemma.

\begin{lem}\label{lem:dom}
	For a given discriminant $\Delta$, there exists:
	\begin{enumerate}
		\item a unique singular modulus corresponding to a triple with $a=1$;
		\item at most two singular moduli corresponding to triples with $a=2$, and:
		\begin{enumerate}
			\item if $\Delta \equiv 0, 4 \bmod 16$, then there are no such singular moduli;
			\item if $\Delta \equiv 1 \bmod 8$ and $\Delta \notin \{-7, -15\}$, then there are exactly two such singular moduli;
		\end{enumerate}
		\item at most two singular moduli corresponding to triples with $a=3$;
		\item at most two singular moduli corresponding to triples with $a=4$;
		\item at most two singular moduli corresponding to triples with $a=5$;
		\item at most four singular moduli corresponding to triples with $a=6$;
		\item at most two singular moduli corresponding to triples with $a=7$;
		\item at most four singular moduli corresponding to triples with $a=8$;
		\item at most three singular moduli corresponding to triples with $a=9$;
		\item at most four singular moduli corresponding to triples with $a=10$;
		\item at most two singular moduli corresponding to triples with $a=11$.
	\end{enumerate}
\end{lem}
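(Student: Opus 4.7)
The plan is to reduce the count for each fixed $a$ to a congruence count. For a triple $(a,b,c) \in T_\Delta$ with first coordinate $a$, the discriminant equation $\Delta = b^2 - 4ac$ determines $c$ uniquely from $b$ as $c = (b^2 - \Delta)/(4a)$, and the defining inequality $-a < b \leq a$ restricts $b$ to the $2a$ consecutive integers $\{-a+1, \ldots, a\}$. Hence the number of triples in $T_\Delta$ with first coordinate $a$ is at most the number of $b$ in this range satisfying $b^2 \equiv \Delta \pmod{4a}$. Establishing each bound then amounts to tabulating the multiset $\{b^2 \bmod 4a : b \in \{-a+1, \ldots, a\}\}$ and reading off the maximum multiplicity with which any residue can appear.

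For part (1), the range $\{0,1\}$ gives residues $\{0,1\}$ modulo $4$, each with multiplicity one; since $\Delta \equiv 0, 1 \pmod 4$, exactly one $b$ satisfies the congruence. Primitivity $\gcd(1,b,c)=1$ is automatic, and the ordering condition holds because $c = (b^2-\Delta)/4 \geq 1 = a$ when $\Delta < 0$, yielding a unique triple.

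For $a \in \{3,4,\ldots,11\}$, I would just carry out the tabulation case by case: for each such $a$, list $b^2 \bmod 4a$ for $b \in \{-a+1,\ldots,a\}$ and read off the largest multiplicity with which any single residue is attained. This number matches the stated upper bound in each of parts (3)--(11). No primitivity or ordering refinement is needed, since the claims are pure upper bounds and these extra conditions can only reduce the count.

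The main obstacle, and the only case requiring any care, is part (2). For $a = 2$ the range $\{-1,0,1,2\}$ gives residues $\{1,0,1,4\}$ modulo $8$, so the maximum multiplicity is two (attained at residue $1$), which establishes the overall bound. For (2a), if $\Delta \equiv 0 \pmod 8$ then $b=0$ is forced and $\gcd(2,0,-\Delta/8) = \gcd(2,\Delta/8)$ equals $2$ exactly when $\Delta \equiv 0 \pmod{16}$; symmetrically, if $\Delta \equiv 4 \pmod 8$ then $b=2$ is forced and $\gcd(2,2,(4-\Delta)/8) = 2$ exactly when $\Delta \equiv 4 \pmod{16}$. In both sub-cases the primitivity condition fails, so no triple exists. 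For (2b), when $\Delta \equiv 1 \pmod 8$ both $b = \pm 1$ satisfy the congruence and $\gcd(2,\pm 1, c) = 1$ automatically, so only the ordering condition $c = (1-\Delta)/8 \geq 2$, together with $b \geq 0$ when $c=2$, needs to be checked; this excludes precisely $\Delta \in \{-7,-15\}$ and otherwise gives exactly two triples.
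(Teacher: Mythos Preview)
Your proof is correct and uses exactly the same idea as the paper: the key observation that any two triples $(a,b_1,c_1),(a,b_2,c_2)\in T_\Delta$ satisfy $b_1^2\equiv b_2^2\pmod{4a}$, followed by a direct residue count (the paper simply cites \cite{BiluLucaMadariaga16} and \cite{Fowler20} for parts (1)--(5) and invokes this congruence for (6)--(11), whereas you spell out the tabulation and the primitivity/ordering analysis for part~(2) explicitly). Your treatment of (2a) and (2b) is accurate, including the identification of $\Delta\in\{-7,-15\}$ as precisely the cases where $c=(1-\Delta)/8\le a$ forces fewer than two admissible triples.
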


\begin{proof}
 (1) and (2) are \cite[Proposition~2.6]{BiluLucaMadariaga16}\footnote{That $\Delta = -15$ must be excluded in 2(b) is not stated in \cite{BiluLucaMadariaga16}, but is necessary since $h(-15)=2$.}. (3)--(5) are proved in \cite[Lemma~2.1]{Fowler20}. (6)--(11) then follow by the same argument as in \cite{Fowler20}, which uses the fact that $b_1^2-b_2^2 \equiv 0 \bmod 4a$ for any two tuples $(a, b_1, c_1), (a, b_2, c_2) \in T_{\Delta}$.
	\end{proof}

The singular modulus corresponding to the unique tuple $(a,b,c) \in T_{\Delta}$ with $a=1$ is called the ``dominant'' singular modulus of discriminant $\Delta$. A singular modulus corresponding to a tuple $(a, b, c) \in T_\Delta$ with $a=2$ is called a ``subdominant'' singular modulus of discriminant $\Delta$. 

\subsection{Bounds on singular moduli}\label{subsec:bd}

The $j$-function has a Fourier expansion
\[ j(z) = q^{-1} + 744 + \sum_{n = 1}^\infty c_n q^n,\]
where $q= \exp(2 \pi i z)$ and the $c_i \in \Z_{>0}$. Observe that it follows from this Fourier expansion that the dominant singular modulus of a given discriminant is always real.

Suppose that $x$ is the singular modulus corresponding to some triple $(a, b, c) \in T_\Delta$, so that $x = j((b + \sqrt{\Delta})/2a)$ and $(b + \sqrt{\Delta})/2a \in F_j$. It follows \cite[Lemma~1]{BiluMasserZannier13} from the Fourier expansion for the $j$-function that
\begin{align}\label{eq:ineq1}
(e^{\pi \lvert \Delta \rvert^{1/2}/a} - 2079) \leq \lvert x \rvert \leq (e^{\pi \lvert \Delta \rvert^{1/2}/a} + 2079).
\end{align}
This inequality explains the choice of the terms ``dominant'' and ``subdominant'' for singular moduli corresponding to triples $(a,b,c) \in T_\Delta$ with $a=1$ and $a=2$ respectively. This terminology was first used in \cite{AllombertBiluMadariaga15}.

In particular, the dominant singular modulus of a given discriminant $\Delta$ is much larger in absolute value than all the other singular moduli of discriminant $\Delta$. 

\begin{prop}[{\cite[Lemma~3.5]{AllombertBiluMadariaga15}}]\label{prop:dom}
	Let $x$ be the dominant singular modulus of discriminant $\Delta$. Suppose that $x'$ is a singular modulus of discriminant $\Delta$ with $x' \neq x$. Then $\lvert x' \rvert \leq 0.1 \lvert x \rvert$.
\end{prop}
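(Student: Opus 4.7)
The plan is to apply the two-sided estimate (2.1) directly to both $x$ and $x'$ and then to reduce the problem to an elementary one-variable inequality. By Lemma~\ref{lem:dom}(1), the dominant modulus $x$ arises from the unique triple in $T_\Delta$ with $a=1$, so the lower bound in (2.1) gives
$$|x| \geq e^{\pi|\Delta|^{1/2}} - 2079.$$
Any distinct singular modulus $x'$ of the same discriminant must correspond to some triple $(a', b', c') \in T_\Delta$ with $a' \geq 2$, and hence the upper bound in (2.1) yields
$$|x'| \leq e^{\pi|\Delta|^{1/2}/2} + 2079.$$

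It therefore suffices to establish that
$$e^{\pi|\Delta|^{1/2}/2} + 2079 \leq 0.1\bigl(e^{\pi|\Delta|^{1/2}} - 2079\bigr).$$
Writing $u = e^{\pi|\Delta|^{1/2}/2}$, this reduces to the quadratic inequality $u^2 - 10u - 22869 \geq 0$, which holds as soon as $u \geq 5 + \sqrt{22894}$, a quantity less than $157$. A direct calculation shows that $|\Delta| \geq 11$ already guarantees $u > 157$, so the desired inequality holds throughout this range.

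To finish, I would note that the conclusion is vacuous unless $h(\Delta) \geq 2$, and the thirteen discriminants with $h(\Delta) = 1$ all lie in the set $\{-3, -4, -7, -8, -11, -12, -16, -19, -27, -28, -43, -67, -163\}$, so the smallest discriminant with $h(\Delta) \geq 2$ is $\Delta = -15$. Hence $|\Delta| \geq 15 > 11$ in every non-vacuous case, completing the argument. The main obstacle one might fear is that for small $|\Delta|$ the additive error of $2079$ in (2.1) could swamp the exponential gap between $|x|$ and $|x'|$, forcing a separate verification by direct tabulation of singular moduli via the Hilbert class polynomials (e.g.\ in PARI). The pleasant observation here is that the range $|\Delta| \leq 10$ where the exponential argument is too weak is automatically excluded by the class-number condition, so no such case-by-case check is needed.
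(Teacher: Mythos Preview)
Your proof is correct and follows essentially the same approach as the paper: the result is cited from \cite{AllombertBiluMadariaga15} for $\lvert \Delta \rvert \geq 11$ (where the proof there is precisely the comparison of the two sides of inequality (2.1) that you carry out), and the paper's parenthetical remark after the statement disposes of $\lvert \Delta \rvert < 11$ by exactly your observation that such discriminants have class number one, making the statement vacuous.
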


(In \cite{AllombertBiluMadariaga15}, it is required that $\lvert \Delta \rvert \geq 11$. Nonetheless, the result holds without this restriction, since $\lvert \Delta \rvert < 11$ implies that $x \in \Q$ and for such $x$ the result is trivial.)

\begin{prop}\label{prop:dominc}
	Let $x$ be the dominant singular modulus of discriminant $\Delta$ and $x' \neq x$ a singular modulus of discriminant $\Delta'$, where $\lvert \Delta' \rvert \leq \lvert \Delta \rvert$. Then $\lvert x' \rvert < \lvert x \rvert$.
\end{prop}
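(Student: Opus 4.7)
The natural approach is a case split based on whether $\Delta' = \Delta$ or $|\Delta'| < |\Delta|$. In the former case, since $x$ is dominant and $x' \neq x$, Proposition~\ref{prop:dom} gives $|x'| \leq 0.1 |x|$, which yields $|x'| < |x|$ whenever $|x| > 0$; the only discriminant with $|x| = 0$ is $\Delta = -3$, and $h(-3)=1$ would then force $x' = x$, contradicting $x' \neq x$. So this case is handled at once.

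For the main case $|\Delta'| \leq |\Delta|-1$ (valid since discriminants are integers), I would apply the two-sided bounds \eqref{eq:ineq1}. The dominant estimate (with $a=1$) gives $|x| \geq e^{\pi\sqrt{|\Delta|}} - 2079$. Writing $x' = j((b'+\sqrt{\Delta'})/(2a'))$ with $(a', b', c') \in T_{\Delta'}$ and $a' \geq 1$, the inequality $\sqrt{|\Delta'|}/a' \leq \sqrt{|\Delta'|} \leq \sqrt{|\Delta|-1}$ yields $|x'| \leq e^{\pi\sqrt{|\Delta|-1}} + 2079$. It therefore suffices to establish $e^{\pi\sqrt{|\Delta|}} - e^{\pi\sqrt{|\Delta|-1}} > 4158$.

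The function $g(t) := e^{\pi\sqrt{t}} - e^{\pi\sqrt{t-1}}$ is increasing for $t \geq 1$ (since the derivative of $s \mapsto e^{\pi\sqrt{s}}$ is itself increasing for $s > 1/\pi^2$), and $g(9) = e^{3\pi} - e^{\pi\sqrt{8}} \approx 5007 > 4158$, so the inequality holds for all $|\Delta| \geq 9$. Since $-5$ and $-6$ are not discriminants, the only remaining cases are $\Delta \in \{-3, -4, -7, -8\}$, each of class number one, with singular moduli $0, 1728, -3375, 8000$ respectively; these are strictly increasing in absolute value, so the claim follows by direct inspection in these cases. I expect no serious obstacle; the only mild subtlety is the finite casework required below the exponential threshold.
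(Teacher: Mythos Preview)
Your proof is correct and follows essentially the same approach as the paper: split into the cases $\Delta'=\Delta$ (handled by Proposition~\ref{prop:dom}) and $\lvert\Delta'\rvert\leq\lvert\Delta\rvert-1$ (handled by the bounds \eqref{eq:ineq1} for $\lvert\Delta\rvert\geq 9$, with the remaining small discriminants checked directly). One very minor remark: your claim that $g(t)=e^{\pi\sqrt{t}}-e^{\pi\sqrt{t-1}}$ is increasing for all $t\geq 1$ is not quite right near $t=1$ (since $f'(s)=\tfrac{\pi}{2\sqrt{s}}e^{\pi\sqrt{s}}\to\infty$ as $s\to 0^+$), but you only need monotonicity for $t\geq 9$, which certainly holds.
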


\begin{proof}
	If $\Delta = \Delta'$, then this follows from Proposition~\ref{prop:dom}. So suppose that $\lvert \Delta' \rvert < \lvert \Delta \rvert$. Hence, $\lvert \Delta' \rvert \leq \lvert \Delta \rvert - 1$. We thus have that
	\[ \lvert x \rvert \geq (e^{\pi \lvert \Delta \rvert^{1/2}} - 2079)\]
	and 
	\[ \lvert x' \rvert \leq (e^{\pi (\lvert \Delta \rvert - 1)^{1/2}} + 2079).\]
	In particular, one may verify that $\lvert x' \rvert < \lvert x \rvert$ if $\lvert \Delta \rvert \geq 9$. The result then follows by inspecting the list of singular moduli with discriminant $<9$ in absolute value.
	\end{proof}

We also make use of the following lower bound for non-zero singular moduli, due to Bilu, Luca, and Pizarro-Madariaga \cite{BiluLucaMadariaga16} (see also its generalisation to the differences of distinct singular moduli in \cite{BiluFayeZhu19}).

\begin{lem}[{\cite[\S3.1]{BiluLucaMadariaga16}}]\label{lem:lower}
	Let $x$ be a non-zero singular modulus of discriminant $\Delta$. Then
	\[ \lvert x \rvert \geq \min \{ 4.4 \times 10^{-5}, 3500 \lvert \Delta \rvert^{-3}\}. \]
\end{lem}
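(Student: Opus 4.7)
The plan is to exploit the fact that $j$ has a unique zero in the standard fundamental domain $F_j$, namely $\tau^* := e^{i\pi/3} = (1 + i\sqrt{3})/2$, and that this zero is of order three (since the stabiliser of $\tau^*$ in $\mathrm{PSL}_2(\Z)$ has order $3$). For any non-zero singular modulus $x = j(\tau)$ with $\tau \in F_j$ we have $\tau \neq \tau^*$, and I would lower-bound $\lvert x \rvert$ by combining a lower bound for $\lvert \tau - \tau^* \rvert$ with the local expansion of $j$ around $\tau^*$.

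First, write $\tau = (b + i\sqrt{\lvert \Delta \rvert})/(2a)$ for the unique $(a,b,c) \in T_\Delta$ associated with $x$, so that
\[
\tau - \tau^* = \frac{b - a}{2a} + i\,\frac{\sqrt{\lvert \Delta \rvert} - a\sqrt{3}}{2a}.
\]
Since $b \leq a$ in $T_\Delta$, the real part is non-positive. If $b < a$, then $\lvert \re(\tau - \tau^*) \rvert = (a-b)/(2a) \geq 1/(2a) \geq \sqrt{3}/(2\sqrt{\lvert \Delta \rvert})$, using $3a^2 \leq \lvert \Delta \rvert$ (a consequence of $a \leq c$ together with $\Delta = b^2 - 4ac$). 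If $b = a$, the real part vanishes and $\lvert \Delta \rvert - 3a^2 = 4a(c-a)$; the subcase $a = c$ forces $\gcd(a,a,a) = a = 1$, whence $\Delta = -3$ and $x = 0$, contrary to assumption. So $c \geq a + 1$ and $\lvert \Delta \rvert - 3a^2 \geq 4a$; rationalising,
\[
\bigl\lvert \sqrt{\lvert \Delta \rvert} - a\sqrt{3} \bigr\rvert = \frac{\lvert \Delta \rvert - 3a^2}{\sqrt{\lvert \Delta \rvert} + a\sqrt{3}} \geq \frac{4a}{2\sqrt{\lvert \Delta \rvert}},
\]
so that $\lvert \tau - \tau^* \rvert \geq 1/\sqrt{\lvert \Delta \rvert}$. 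In either case $\lvert \tau - \tau^* \rvert \geq c_0 / \sqrt{\lvert \Delta \rvert}$ for an absolute constant $c_0 > 0$.

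Second, the Taylor expansion $j(\tau) = C(\tau - \tau^*)^3 + O((\tau - \tau^*)^4)$ with $C \neq 0$ (since $\tau^*$ is a triple zero of $j$) yields $\lvert j(\tau) \rvert \geq C_1 \lvert \tau - \tau^* \rvert^3$ on some neighbourhood $\lvert \tau - \tau^* \rvert < \delta$. For $\tau \in F_j$ outside this neighbourhood, $\lvert j(\tau) \rvert$ is bounded below by a positive constant $C_2$, using continuity, the uniqueness of $\tau^*$ as a zero of $j$ in $F_j$, and the fact that $\lvert j(\tau) \rvert \to \infty$ as $\mathrm{Im}(\tau) \to \infty$. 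Combining, $\lvert x \rvert \geq \min(C_2,\, C_1 c_0^3 \lvert \Delta \rvert^{-3/2})$, which is stronger than, and hence implies, the stated bound $\min(4.4 \times 10^{-5},\, 3500 \lvert \Delta \rvert^{-3})$.

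The main obstacle is pinning down the explicit numerical constants $4.4 \times 10^{-5}$ and $3500$ rather than just the qualitative shape of the bound. This would require (i) a quantitative form of the Taylor expansion of $j$ near $\tau^*$, e.g.\ by explicitly bounding $j'''(\tau^*)/6$ and the remainder on an effective neighbourhood, (ii) an effective lower bound for $\lvert j \rvert$ on $F_j \setminus B(\tau^*, \delta)$, and (iii) a finite verification for small $\lvert \Delta \rvert$ by enumerating the non-zero singular moduli in PARI and checking the bound directly.
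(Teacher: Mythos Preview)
The paper does not give its own proof of this lemma; it simply cites \cite[\S3.1]{BiluLucaMadariaga16} and notes (in the subsequent remark) that the degree assumption present in that reference is not actually used in the argument. Your approach is essentially the one carried out there: lower-bound the distance from $\tau \in F_j$ to the unique zero $\rho = e^{i\pi/3}$ of $j$ in $F_j$, and then exploit the order-three vanishing of $j$ at $\rho$. Your arithmetic estimate $\lvert \tau - \rho \rvert \geq c_0 \lvert \Delta \rvert^{-1/2}$ is correct and in fact yields the sharper exponent $-3/2$ in place of the stated $-3$; you are also right that pinning down the specific constants $4.4 \times 10^{-5}$ and $3500$ requires the explicit numerical work you outline in (i)--(iii).
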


\begin{remark}
The proof of this inequality in \cite{BiluLucaMadariaga16} is a step in a larger proof, in which the authors have previously assumed that the singular moduli considered are of degree at least $3$. This degree assumption though is not used anywhere in the proof of the inequality in Lemma~\ref{lem:lower}; this inequality thus holds for all non-zero singular moduli.
\end{remark}

\subsection{Fields generated by singular moduli}\label{subsec:fields}

Singular moduli generate the ring class fields of imaginary quadratic fields. In particular, it is a strong condition for two distinct singular moduli to generate closely related fields. We now collect some explicit results along these lines, which we need for our proof. The first result was proved mostly in \cite{AllombertBiluMadariaga15}, as Corollary~4.2 and Proposition~4.3 there. For the ``further'' claim in (2), see \cite[\S3.2.2]{BiluLucaMadariaga16}.

\begin{lem}\label{lem:samefield}
	Let $x_1, x_2$ be singular moduli with discriminants $\Delta_1, \Delta_2$ respectively. Suppose that $\Q(x_1) = \Q(x_2)$, and denote this field $L$. Then $h(\Delta_1) = h(\Delta_2)$, and we have that:
	\begin{enumerate}
		\item If $\Q(\sqrt{\Delta_1}) \neq \Q(\sqrt{\Delta_2})$, then the possible fields $L$ are listed in \cite[Table~2]{AllombertBiluMadariaga15}. Further, the extension $L / \Q$ is Galois and the discriminant of any singular modulus $x$ with $\Q(x) = L$ is also listed in this table.
		\item If $\Q(\sqrt{\Delta_1}) = \Q(\sqrt{\Delta_2})$, then either: $L = \Q$ and $\Delta_1, \Delta_2 \in \{-3, -12, -27\}$; or: $\Delta_1 / \Delta_2 \in \{1, 4, 1/4\}$. Further, if $\Delta_1 = 4 \Delta_2$, then $\Delta_2 \equiv 1 \bmod 8$.
	\end{enumerate}
\end{lem}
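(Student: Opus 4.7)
The plan is to reduce to a classification of ring class fields of orders in imaginary quadratic fields. First, since $h(\Delta_i) = [\Q(x_i):\Q] = [L:\Q]$ for $i = 1, 2$, we immediately obtain $h(\Delta_1) = h(\Delta_2)$. Writing $K_i = \Q(\sqrt{\Delta_i})$, complex multiplication theory identifies $K_i(x_i) = K_i \cdot L$ with the ring class field of the order $\mathcal{O}_{\Delta_i}$ inside $K_i$.

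For part (1), where $K_1 \neq K_2$, the key observation is that $L$ is simultaneously a subfield of two abelian ring class fields attached to two distinct imaginary quadratic fields. I would appeal to the systematic treatment of Allombert, Bilu, and Pizarro-Madariaga in \cite[Corollary~4.2 and Proposition~4.3]{AllombertBiluMadariaga15}: they use this double description to force $[L:\Q]$ to be small, via Galois-theoretic constraints on how abelian subfields of distinct ring class fields can overlap, and then enumerate the finitely many surviving cases, producing Table~2.

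For part (2), where $K := K_1 = K_2$ has fundamental discriminant $D$ and $\Delta_i = f_i^2 D$, the identity $K(x_1) = K(x_2)$ asserts equality of ring class fields of conductors $f_1$ and $f_2$ inside $K$. I would use the standard correspondence in which compositum and intersection of ring class fields of $K$ match $\mathrm{lcm}$ and $\gcd$ of conductors, to promote the equality to $H_{\mathcal{O}_g} = H_{\mathcal{O}_m}$, where $g = \gcd(f_1, f_2)$ and $m = \mathrm{lcm}(f_1, f_2)$, hence $h(g^2 D) = h(m^2 D)$. Plugging into the standard class number formula
\[ h(f^2 D) = \frac{h(D)\, f}{[\mathcal{O}_K^\times : \mathcal{O}_f^\times]} \prod_{p \mid f} \left( 1 - \left(\frac{D}{p}\right)\frac{1}{p} \right), \]
and comparing prime by prime shows that, outside of $D \in \{-3, -4\}$, the only way to have $g \neq m$ is $m = 2g$ with $2 \nmid g$ and $\bigl(\frac{D}{2}\bigr) = 1$, i.e., $D \equiv 1 \bmod 8$. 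This yields $\Delta_1 / \Delta_2 \in \{1, 4, 1/4\}$ together with the ``further'' congruence $\Delta_2 \equiv 1 \bmod 8$ on the smaller discriminant; the last implication is also drawn out explicitly in \cite[\S3.2.2]{BiluLucaMadariaga16}. The exceptional cases $D \in \{-3, -4\}$ I would handle by direct enumeration, accounting for the non-trivial unit index; only $D = -3$ produces an additional sporadic case, namely $L = \Q$ with $\Delta_1, \Delta_2 \in \{-3, -12, -27\}$, the $D = -4$ coincidences $\Delta \in \{-4, -16\}$ being already absorbed by the ratio condition.

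The main obstacle will be the prime-by-prime analysis of the class number identity: for each prime $p \mid m$ with $p \nmid g$, the contribution $p^{v_p(m) - 1}(p - \chi(p))$ to the product must equal $1$, where $\chi(p) = \bigl(\frac{D}{p}\bigr)$ is the Kronecker symbol. Since $p - \chi(p) \geq p - 1 \geq 2$ for odd $p$, this forces $p = 2$, $v_2(m) = 1$, and $\chi(2) = 1$. The residual $D \in \{-3, -4\}$ bookkeeping, where the unit group index is nontrivial, then has to be carried out separately in order to pick up the sporadic coincidence $\{-3, -12, -27\}$ and to confirm that no further sporadic coincidences hide below the $h(D) = 1$ threshold.
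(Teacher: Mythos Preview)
Your proposal is correct and follows essentially the same approach as the paper: the paper does not give an independent proof of this lemma but simply cites \cite[Corollary~4.2 and Proposition~4.3]{AllombertBiluMadariaga15} for the main assertions and \cite[\S3.2.2]{BiluLucaMadariaga16} for the ``further'' congruence in (2), which are exactly the references you invoke. Your write-up goes further than the paper by sketching the actual content of those arguments (the Galois-theoretic constraints in the $K_1\neq K_2$ case, and the class-number-formula comparison of ring class fields of conductors $f_1,f_2$ in the $K_1=K_2$ case), but the underlying route is the same.
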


The next two results are from \cite{Fowler20}.

\begin{lem}[{\cite[Lemma~2.3]{Fowler20}}]\label{lem:subfieldsamefund}
	Let $x_1, x_2$ be singular moduli such that $\Q(x_1) \supset \Q(x_2)$ with $[\Q(x_1) : \Q(x_2)]=2$. Suppose that $\Q(\sqrt{\Delta_1}) = \Q(\sqrt{\Delta_2})$. Then either $x_2 \in \Q$, or $\Delta_1 \in \{9 \Delta_2 / 4, 4\Delta_2, 9 \Delta_2, 16\Delta_2\}$.	
\end{lem}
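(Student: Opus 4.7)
The plan is to use class field theory of imaginary quadratic orders. Since $\Q(\sqrt{\Delta_1}) = \Q(\sqrt{\Delta_2})$, both singular moduli lie in the same field $K = \Q(\sqrt{D})$, where $D$ is the common fundamental discriminant, and we may write $\Delta_i = f_i^2 D$. The key background fact is that for a singular modulus $x$ of discriminant $f^2 D$, the ring class field $H_f$ of conductor $f$ equals $K(x)$ and satisfies $[H_f : \Q(x)] = 2$; this lets the field-theoretic hypothesis on $(x_1, x_2)$ be rephrased in terms of the pair $(H_{f_1}, H_{f_2})$.

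First I would deduce $H_{f_2} \subset H_{f_1}$ by taking compositum with $K$ on both sides of $\Q(x_2) \subset \Q(x_1)$. Containment of ring class fields reduces to divisibility of conductors, up to one exception: $H_{f'} \subset H_f$ iff $f' \mid f$, except that $H_{2f} = H_f$ whenever $f$ is odd and $D \equiv 1 \pmod 8$. Writing $f_i^{\ast}$ for the effective conductor (equal to $f_i$ unless the exceptional collapse applies), this gives $f_2^{\ast} \mid f_1^{\ast}$. Next, the index condition $[\Q(x_1):\Q(x_2)] = 2$ becomes $h(\Delta_1) = 2 h(\Delta_2)$; combined with the class number formula for orders,
\[
h(f^2 D) = \frac{f \, h(D)}{[\mathcal{O}_K^{\ast} : \mathcal{O}_f^{\ast}]}\prod_{p \mid f}\left(1 - \left(\tfrac{D}{p}\right)\tfrac{1}{p}\right),
\]
this produces a Diophantine identity relating $f_1, f_2$ and the Kronecker symbols $(D/p)$ at small primes $p$.

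The unit-index ratio differs from $1$ only when $D \in \{-3,-4\}$ and $f_2 \in \{1,2\}$, in each of which $h(\Delta_2) = 1$, so $x_2 \in \Q$ and the first alternative holds; outside this regime the unit factor is trivial. Since each remaining local factor satisfies $1 - (D/p)/p \in [(p-1)/p, (p+1)/p]$, the identity forces $f_1/f_2$ to be a small rational with prime support contained in $\{2, 3\}$. A case-by-case enumeration, matched against the divisibility $f_2^{\ast} \mid f_1^{\ast}$ and invoking Lemma~\ref{lem:samefield} to replace $x_1$ by an equivalent singular modulus of smaller discriminant whenever the collapse $H_{2f} = H_f$ occurs, yields $\Delta_1/\Delta_2 \in \{9/4, 4, 9, 16\}$.

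The main obstacle is precisely this interplay between the numerical identity and the collapse $H_{2f} = H_f$: the latter is what admits the non-integer ratio $9/4$, arising from pairs $(f_1, f_2) = (3k, 2k)$ with $\gcd(k,6) = 1$ and $D \equiv 1 \pmod{24}$, whose effective conductors satisfy $f_2^{\ast} = k \mid 3k = f_1^{\ast}$. Tracking this collapse carefully while verifying every case of the enumeration is the most delicate part of the argument.
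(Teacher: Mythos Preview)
The paper does not prove this lemma; it is quoted without proof from the author's earlier work \cite[Lemma~2.3]{Fowler20}, so there is no in-paper argument to compare against. Your approach---passing to the common imaginary quadratic field $K$, using that $K(x_i)$ is the ring class field of conductor $f_i$, deducing $H_{f_2}\subset H_{f_1}$ with index~$2$, and then combining the containment with the class number formula for orders to pin down $f_1/f_2$---is the standard route and is almost certainly what the cited proof does.

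Two places in your sketch deserve tightening. First, the assertion that ``$H_{f'}\subset H_f$ iff $f'\mid f$, except that $H_{2f}=H_f$ when $f$ is odd and $D\equiv 1\pmod 8$'' is not literally a theorem one can quote; the clean statement is that $f'\mid f$ implies $H_{f'}\subset H_f$, while the converse direction is governed entirely by equality of class numbers via the formula you wrote down. In practice you do not need a general ``iff'': once you know $[H_{f_1}:H_{f_2}]=2$ you can work directly with the class number identity $h(f_1^2 D)=2h(f_2^2 D)$ together with $H_{f_2}\subset H_{f_1}$, and the latter containment already forces $f_2^\ast\mid f_1^\ast$ in your notation because ring class fields over a fixed $K$ are cofinal in the lattice of abelian extensions indexed by conductors. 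Second, your appeal to Lemma~\ref{lem:samefield} ``to replace $x_1$ by an equivalent singular modulus of smaller discriminant'' is opaque as written; what you presumably mean is that when the collapse $H_{2f}=H_f$ occurs one may assume the conductor is odd (or, equivalently, pass to the minimal discriminant generating the same field), and that normalisation is exactly what Lemma~\ref{lem:samefield}(2) records. Spelling this out would make the $9/4$ case transparent rather than leaving it as the ``most delicate part''.
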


\begin{lem}\label{lem:subfielddiffund}
	Let $x_1, x_2$ be singular moduli such that $\Q(x_1) \supset \Q(x_2)$ with $[\Q(x_1) : \Q(x_2)]=2$. Suppose that $\Q(\sqrt{\Delta_1}) \neq \Q(\sqrt{\Delta_2})$. Then there exists $i \in \{1,2\}$ such that: the extension $\Q(x_i) / \Q$ is Galois and either $\Delta_i$ is listed in \cite[Table~1]{AllombertBiluMadariaga15} or $h(\Delta_i) \geq 128$.
\end{lem}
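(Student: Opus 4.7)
The proof will exploit the structural fact that the ring class field $H_\Delta = \Q(\sqrt{\Delta},x)$ of a singular modulus $x$ of discriminant $\Delta$ is Galois over $\Q$ with Galois group isomorphic to $\mathrm{Cl}(\Delta) \rtimes \langle c \rangle$, where complex conjugation $c$ acts on the class group by inversion. A standard consequence of this dihedral-type structure is that $\Q(x)/\Q$ is Galois if and only if $\mathrm{Cl}(\Delta)$ has exponent dividing $2$, in which case $\Q(x) = H_\Delta$. From the hypothesis $[\Q(x_1):\Q(x_2)] = 2$ the plan first extracts $h(\Delta_1) = 2h(\Delta_2)$; moreover, $\Q(x_2) \subset \Q(x_1) \subset H_1$ together with the Galois property of $H_1/\Q$ forces $H_2 \subset H_1$, so $\sqrt{\Delta_2} \in H_1$ and $\Q(\sqrt{\Delta_1},\sqrt{\Delta_2}) \subset H_1$ is a biquadratic subfield.

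The main step is to show that $\Q(x_i)/\Q$ is Galois for some $i \in \{1,2\}$. Setting $G_1 := \Gal(H_1/\Q)$, the subgroup $\Gal(H_1/\Q(x_1)) = \langle \tau c \rangle$ has order $2$ for some $\tau \in \mathrm{Cl}(\Delta_1)$, and the order-$4$ overgroup $H := \Gal(H_1/\Q(x_2))$ takes the form $\{1, \alpha, \tau c, \alpha \tau c\}$ for a unique $\alpha \in \mathrm{Cl}(\Delta_1)[2]\setminus\{1\}$; indeed $\langle \alpha \rangle$ is the central normal subgroup $\Gal(H_1/H_2)$. A direct calculation in the semidirect product shows that $H \triangleleft G_1$ (equivalently, $\Q(x_2)/\Q$ is Galois) if and only if $\mathrm{Cl}(\Delta_1)^2 \subseteq \langle \alpha \rangle$. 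So either $\mathrm{Cl}(\Delta_1)$ has exponent dividing $2$ (giving $\Q(x_1)/\Q$ Galois) or $\mathrm{Cl}(\Delta_1)^2 = \langle \alpha \rangle$ (giving $\Q(x_2)/\Q$ Galois); the remaining possibility $\lvert \mathrm{Cl}(\Delta_1)^2 \rvert > 2$ must be excluded by combining the constraint $\sqrt{\Delta_2}\in H_1$ with genus-theoretic information, using that $\alpha$ fixes $\Q(\sqrt{\Delta_2})$ pointwise while acting nontrivially on $\Q(\sqrt{\Delta_1 \Delta_2})$. I expect this last exclusion to be the main obstacle, since it requires leveraging that $x_2$ is not merely an element of $H_1$ but specifically a singular modulus generating the ring class field $H_2$ of a distinct imaginary quadratic field.

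Once $\Q(x_i)/\Q$ is Galois for some $i$, one has $\mathrm{Cl}(\Delta_i) \cong (\Z/2\Z)^n$ for some $n \geq 0$. Here I invoke Weinberger's theorem (the result behind Definition~\ref{def:tat}): the imaginary quadratic orders with class group of exponent dividing $2$ form a finite, explicitly computable list with fundamental discriminant bounded by $5460$, apart from at most one unresolved exceptional field $K_*$ whose class number must be large. Calibrating the effective class-number bound at $128$, either $\Delta_i$ appears in the explicit tabulation of \cite[Table~1]{AllombertBiluMadariaga15} or $h(\Delta_i) \geq 128$, which is the stated dichotomy.
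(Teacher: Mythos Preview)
The paper does not prove this lemma directly; it simply observes that the statement is exactly what is established in the proof of \cite[Lemma~2.4]{Fowler20}. So the comparison is really between your sketch and the argument in that earlier paper, whose core (as one can reconstruct) is the same dihedral/ring-class-field set-up you describe. Your framework and reductions are correct: the decisive point is indeed to show that some $\Q(x_i)/\Q$ is Galois, after which Weinberger's theorem and the tabulation in \cite{AllombertBiluMadariaga15} give the dichotomy.

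However, your treatment of that decisive point has a genuine gap, which you yourself flag, and your proposed route to closing it is in error. Since $\alpha$ generates $\Gal(H_1/H_2)$ it fixes $\sqrt{\Delta_2}\in H_2$, and since $\alpha\in\mathrm{Cl}(\Delta_1)$ it also fixes $\sqrt{\Delta_1}$; hence $\alpha$ fixes $\sqrt{\Delta_1\Delta_2}$, contrary to what you claim. Genus theory alone, applied inside $H_1$, will not exclude $\lvert\mathrm{Cl}(\Delta_1)^2\rvert>2$. What is missing is precisely the point you allude to at the end: one must use that $H_2$ is itself a ring class field, not merely a Galois subfield of $H_1$. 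Concretely, set $N=\Gal(H_1/\Q(\sqrt{\Delta_2}))$; this is an index-$2$ subgroup of $G_1$ distinct from $\mathrm{Cl}(\Delta_1)$, so $N=(N\cap\mathrm{Cl}(\Delta_1))\cup(N\cap\mathrm{Cl}(\Delta_1))\cdot\beta c$ for some $\beta$, and conjugation by $\beta c$ inverts $N\cap\mathrm{Cl}(\Delta_1)$. Now $N/\langle\alpha\rangle\cong\Gal(H_2/\Q(\sqrt{\Delta_2}))=\mathrm{Cl}(\Delta_2)$ is abelian, which forces $\mu^2\in\langle\alpha\rangle$ for every $\mu\in N\cap\mathrm{Cl}(\Delta_1)$; since $(\beta c)^2=1$, every element of $N/\langle\alpha\rangle$ then has order at most $2$. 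Thus $\mathrm{Cl}(\Delta_2)$ is $2$-elementary and $\Q(x_2)/\Q$ is Galois, which both closes your gap and shows one may in fact always take $i=2$.
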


\begin{proof}
	This is a slightly stronger statement than \cite[Lemma~2.4]{Fowler20}, but is in fact exactly what is proved there.
	\end{proof}

We also need a result on the exceptional field $K_*$, which was defined in Definition~\ref{def:tat}.

\begin{lem}\label{lem:tat}
	If $x$ is a singular modulus of discriminant $\Delta$ such that the extension $\Q(x) / \Q$ is Galois and $h(\Delta) \geq 128$, then $\Q(\sqrt{\Delta}) = K_*$. 
\end{lem}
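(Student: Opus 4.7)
The plan is to translate the Galois hypothesis on $\Q(x)/\Q$ into a structural condition on the class group of the imaginary quadratic order of discriminant $\Delta$, apply Weinberger's theorem to the fundamental discriminant $D$ of $\Q(\sqrt{\Delta})$, and finally rule out the remaining small cases using the lower bound $h(\Delta) \geq 128$.

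First, I would show that $\mathrm{Cl}(\Delta) \cong (\Z/2\Z)^n$ for some $n \geq 0$. The ring class field $H := \Q(\sqrt{\Delta}, x)$ is Galois over $\Q$, with Galois group isomorphic to the generalized dihedral group $\mathrm{Cl}(\Delta) \rtimes \langle c \rangle$, where $c$ denotes complex conjugation and acts on $\mathrm{Cl}(\Delta)$ by inversion. Since $[H : \Q] = 2 h(\Delta)$ and $[\Q(x):\Q] = h(\Delta)$, we have $\sqrt{\Delta} \notin \Q(x)$, and the subgroup $\Gal(H/\Q(x))$ of order $2$ must take the form $\langle g_0 c \rangle$ for some $g_0 \in \mathrm{Cl}(\Delta)$. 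A direct conjugation computation in the semidirect product shows that $\langle g_0 c \rangle$ is normal in $\Gal(H/\Q)$ if and only if $g^2 = e$ for every $g \in \mathrm{Cl}(\Delta)$; the Galois hypothesis on $\Q(x)/\Q$ therefore forces $\mathrm{Cl}(\Delta)$ to be an elementary abelian $2$-group.

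Second, the standard surjection $\mathrm{Cl}(\Delta) \twoheadrightarrow \mathrm{Cl}(D)$, obtained by extending proper order ideals to the maximal order, implies $\mathrm{Cl}(D)$ is also an elementary abelian $2$-group. By Weinberger's theorem as recorded in Definition~\ref{def:tat}, either $\Q(\sqrt{\Delta}) = \Q(\sqrt{D}) = K_*$, in which case the lemma is proved, or $|D| \leq 5460$. In the latter case one enumerates the finitely many fundamental $D$ with $|D| \leq 5460$ and $\mathrm{Cl}(D)$ elementary abelian $2$-group; for each such $D$, the class-number formula for orders combined with genus theory forces $\mathrm{Cl}(f^2 D)$ to acquire an element of order greater than $2$ for all but finitely many conductors $f$, and for each surviving pair $(D,f)$ one verifies $h(f^2 D) < 128$, contradicting the hypothesis.

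The main obstacle is this final finite check: the pairs $(D,f)$ for which $\mathrm{Cl}(f^2 D)$ is elementary abelian $2$-group are precisely those corresponding to the classical idoneal (``convenient'') discriminants of Euler and Gauss, and one must appeal either to the known tabulation of these or run a direct PARI computation in the style of the other computations mentioned in the paper. Once the finite list is in hand, the verification that none of the $h(f^2 D)$ reaches $128$ is essentially mechanical, but this is the only part of the argument that does not reduce to standard facts about Galois theory of ring class fields together with Weinberger's theorem.
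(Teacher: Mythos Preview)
Your argument is correct and unpacks exactly what lies behind the paper's one-line citation to \cite[\S2.2, Remark~2.3, and Corollary~3.3]{AllombertBiluMadariaga15}: those results show that $\Q(x)/\Q$ being Galois forces $\mathrm{Cl}(\Delta)$ to be an elementary abelian $2$-group, and then tabulate all such discriminants (apart from those with $\Q(\sqrt{\Delta})=K_*$), each having $h(\Delta)$ well below $128$. Your final finite check is therefore already carried out in that reference, so there is no need to redo it.
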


\begin{proof}
	This is immediate from \cite[\S2.2, Remark~2.3, and Corollary~3.3]{AllombertBiluMadariaga15}.
	\end{proof}

\subsection{Explicit Andr\'e--Oort results in two dimensions}\label{subsec:2dim}

We recall here those explicit Andr\'e--Oort results in two dimensions which we will make use of in the sequel. These are due to Riffaut \cite{Riffaut19} and Luca and Riffaut \cite{LucaRiffaut19}.

\begin{thm}[{\cite[Theorem~1.3]{LucaRiffaut19}}]\label{thm:sum2}
	Let $x_1, x_2$ be distinct singular moduli and $m, n \in \Z_{>0}$. Suppose that $A x_1^m + B x_2^n = C$ for some $A, B, C \in \Q^\times$. Then $\Q(x_1)=\Q(x_2)$ and this number field has degree at most $2$ over $\Q$.
\end{thm}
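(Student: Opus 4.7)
The plan is to proceed in two steps: first show that $\Q(x_1) = \Q(x_2)$, then bound this common field by a degree-$2$ extension of $\Q$.

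For step one, the key input is a primitive element property for singular moduli: for any singular modulus $x$ and positive integer $k$, one has $\Q(x^k) = \Q(x)$, a result due to Faye--Riffaut \cite{FayeRiffaut18} of the same flavour as the generalisations established in Section~\ref{sec:prim}. Granting this, the relation $A x_1^m + B x_2^n = C$ gives $x_2^n = (C - A x_1^m)/B \in \Q(x_1)$, so $\Q(x_2) = \Q(x_2^n) \subseteq \Q(x_1)$; by symmetry $\Q(x_1) \subseteq \Q(x_2)$, and we may set $L := \Q(x_1) = \Q(x_2)$.

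For step two, set $d = [L : \Q] = h(\Delta_1) = h(\Delta_2)$ and suppose for contradiction that $d \geq 3$. Each Galois embedding $\sigma : L \to \C$ produces a conjugate relation $A\sigma(x_1)^m + B\sigma(x_2)^n = C$, giving $d$ equations between singular moduli of discriminants $\Delta_1$ and $\Delta_2$. The embedding sending $x_1$ to the dominant singular modulus $x_1^\star$ of $\Delta_1$ forces, via the Fourier bound~(\ref{eq:ineq1}), $|\sigma(x_2)|^n$ to be of order $e^{m\pi|\Delta_1|^{1/2}}$, so by Proposition~\ref{prop:dom} necessarily $\sigma(x_2) = x_2^\star$, the dominant singular modulus of $\Delta_2$. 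The same size analysis applied to any other embedding shows that no other $\sigma$ can send $x_2$ to $x_2^\star$, so the dominant conjugates correspond uniquely.

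The crux is extracting a contradiction once $d \geq 3$. The strategy is to subtract pairs of conjugate relations to obtain
\[
A\bigl(\sigma_i(x_1)^m - \sigma_j(x_1)^m\bigr) = -B\bigl(\sigma_i(x_2)^n - \sigma_j(x_2)^n\bigr),
\]
and combine Lemma~\ref{lem:lower} (together with the refinement to differences of distinct singular moduli due to Bilu--Faye--Zhu, cited after that lemma) with the upper bounds from~(\ref{eq:ineq1}) to derive a numerical contradiction once $|\Delta_1|$ and $|\Delta_2|$ are sufficiently large. The main obstacle is the unbalanced case where one discriminant is much larger than the other: there the crude dominance argument loses bite and one must exploit the subdominant enumeration of Lemma~\ref{lem:dom} to pin down $\sigma(x_2)$ more finely, estimating differences of (sub)dominant moduli of $\Delta_2$ against powers of non-dominant moduli of $\Delta_1$. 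The finitely many remaining pairs of small discriminants would then be eliminated by an explicit enumeration in PARI, directly checking that no non-trivial relation $Ax_1^m + Bx_2^n = C$ with $A, B, C \in \Q^\times$ holds between distinct singular moduli generating a field of degree $\geq 3$.
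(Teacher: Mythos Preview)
This theorem is not proved in the paper: it is quoted as \cite[Theorem~1.3]{LucaRiffaut19} in Subsection~\ref{subsec:2dim} and used as a black box. There is therefore no ``paper's proof'' to compare your proposal against.

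On the substance of your sketch: Step one is correct, though the attribution is slightly off --- the identity $\Q(x^k)=\Q(x)$ is Proposition~\ref{prop:fieldpower}, due to Riffaut \cite{Riffaut19} rather than Faye--Riffaut. Step two is a plausible outline and is indeed in the spirit of the arguments in \cite{LucaRiffaut19}, but as written it has a genuine gap at the end. Your proposed final step is an explicit PARI enumeration over the finitely many remaining pairs of small discriminants, ``directly checking that no non-trivial relation $Ax_1^m + Bx_2^n = C$ with $A,B,C\in\Q^\times$ holds''. But for each fixed pair $(x_1,x_2)$ this is an infinite family of conditions indexed by $(m,n)\in\Z_{>0}^2$; a finite computation cannot dispose of it without a further argument reducing to finitely many exponents or establishing linear independence of $1,x_1^m,x_2^n$ uniformly. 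Relatedly, throughout Step two the constants $A,B,C$ are arbitrary non-zero rationals, so the size comparisons you invoke must be arranged to be independent of them (for instance by always working with differences of two conjugate relations so that $C$ drops out, and then taking ratios so that $A/B$ drops out); your sketch gestures at this but does not carry it through. These are exactly the points where the actual proof in \cite{LucaRiffaut19} does real work.
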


\begin{thm}[{\cite[Theorem~1.7]{Riffaut19}}]\label{thm:prod2}
	Let $x_1, x_2$ be non-zero singular moduli. Suppose that $x_1^m x_2^n \in \Q$ for some $m, n \in \Z \setminus \{0\}$. Then one of the following holds:
	\begin{enumerate}
		\item $x_1, x_2 \in \Q$;
		\item $x_1 = x_2$ and $m+n =0$;
		\item $m=n$ and $x_1, x_2$ are distinct, of degree $2$, and conjugate over $\Q$.
	\end{enumerate}
\end{thm}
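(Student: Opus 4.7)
The plan is Galois-theoretic, using the size bounds (Proposition~\ref{prop:dom}, Lemma~\ref{lem:lower}) and the rigidity of the common-field condition (Lemma~\ref{lem:samefield}). Writing $c = x_1^m x_2^n \in \Q$, for every $\sigma \in \Gal(\overline{\Q}/\Q)$ we have $\sigma(x_1)^m \sigma(x_2)^n = c$. First I would dispose of the degenerate cases. If $x_1 \in \Q$, then $x_2^n \in \Q$, so $\sigma(x_2)/x_2$ is a root of unity and $|\sigma(x_2)| = |x_2|$ for every $\sigma$. The orbit of $x_2$ contains the dominant modulus $y_2$ of discriminant $\Delta_2$; Proposition~\ref{prop:dom} forces $y_2 = x_2$, and then the same bound applied to every other conjugate forces $h(\Delta_2) = 1$, landing us in case (1). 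The case $x_1 = x_2 \notin \Q$ gives $x_1^{m+n} \in \Q$, whence the same argument yields $m + n = 0$, i.e.\ case (2). So assume henceforth that $x_1 \neq x_2$ and neither is rational.

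The central step is to prove $\Q(x_1) = \Q(x_2)$. For any $\sigma$ fixing $x_1$ pointwise, $\sigma(x_2)^n = x_2^n$, so $|\sigma(x_2)| = |x_2|$. If $\sigma(x_2) \neq x_2$, then $\sigma(x_2)$ is another singular modulus $x_2'$ of discriminant $\Delta_2$ with $|x_2'| = |x_2|$ and $x_2'/x_2$ equal to a root of unity. Using the Fourier expansion $j(\tau) = q^{-1} + 744 + O(q)$ together with the description of the Galois orbit via $T_{\Delta_2}$, one shows that for $|\Delta_2|$ sufficiently large the argument of $x_2'/x_2$ is close to but not exactly a rational multiple of $\pi$, so this ratio cannot be a root of unity; the remaining finitely many small-discriminant cases are handled in PARI. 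Hence $\sigma(x_2) = x_2$, so $\Q(x_2) \subset \Q(x_1)$, and by symmetry $\Q(x_1) = \Q(x_2)$.

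Given this common field $L$, Lemma~\ref{lem:samefield} restricts $\Delta_1, \Delta_2$ to a narrow list, and combining this with the Galois-invariance of $|x_1|^m |x_2|^n$ and Proposition~\ref{prop:dom} (which yields conjugates of very different sizes whenever $h(\Delta_i) \geq 3$) forces $h(\Delta_i) \leq 2$. Since $x_1 \notin \Q$, we have $[\Q(x_1):\Q] = 2$ and $x_1, x_2$ are Galois-conjugate. Applying the non-trivial element of $\Gal(L/\Q)$ to $x_1^m x_2^n = c$ yields $(x_1/x_2)^{m-n} = 1$; since every degree-$2$ singular modulus is real (the dominant modulus of each such discriminant is real, so both conjugates are), $x_1/x_2 = \pm 1$, and a PARI enumeration of the $58$ degree-$2$ singular moduli eliminates $x_1 = -x_2$. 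Therefore $m = n$, which is case (3).

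The main obstacle is upgrading $|\sigma(x_2)| = |x_2|$ to $\sigma(x_2) = x_2$: Proposition~\ref{prop:dom} separates the dominant modulus from the rest but does not distinguish non-dominant conjugates of equal absolute value, so one has to combine the $q$-expansion asymptotics with a finite computational check to rule out exceptional small-discriminant configurations where some $x_2'/x_2$ happens to be an exact root of unity. A similar computational step is needed to bound $h(\Delta_i)$ in the case $\Q(\sqrt{\Delta_1}) \neq \Q(\sqrt{\Delta_2})$, where one appeals to the explicit list in Table~2 of \cite{AllombertBiluMadariaga15}.
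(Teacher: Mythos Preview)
This theorem is not proved in the paper: it is quoted verbatim as \cite[Theorem~1.7]{Riffaut19} and used as an input. There is therefore no proof here to compare your proposal against; Riffaut's own argument proceeds via height bounds and the isogeny estimates developed in \cite{Riffaut19}, which is a rather different route from the purely Galois-theoretic sketch you give.

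On the proposal itself, two remarks. First, your ``central step''---upgrading $\sigma(x_2)^n=x_2^n$ to $\sigma(x_2)=x_2$ via Fourier asymptotics and a PARI search for root-of-unity ratios---is unnecessarily delicate. Proposition~\ref{prop:fieldpower} (also from \cite{Riffaut19}, and stated in the paper) gives this in one line: $\sigma(x_2^n)=x_2^n$ means $\sigma$ fixes the element $x_2^n$, hence fixes $\Q(x_2^n)=\Q(x_2)$ pointwise, hence $\sigma(x_2)=x_2$. This removes the vaguest part of your argument entirely.

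Second, the endgame has genuine gaps. After establishing $\Q(x_1)=\Q(x_2)$, you assert that Galois-invariance of $|x_1|^m|x_2|^n$ together with Proposition~\ref{prop:dom} ``forces $h(\Delta_i)\leq 2$'', but this is where the real work lies (and is where Riffaut's height bounds enter); for $m,n$ of opposite sign the dominant/non-dominant dichotomy does not obviously produce a contradiction, since a large conjugate of $x_1$ can be balanced by a large conjugate of $x_2$. Moreover, even granting $[\Q(x_1):\Q]=2$, the equality $\Q(x_1)=\Q(x_2)$ does \emph{not} imply that $x_1,x_2$ are Galois-conjugate: by Lemma~\ref{lem:samefield} they could have different discriminants with the same field, and this possibility must be excluded before you can apply the nontrivial automorphism to swap them and deduce $(x_1/x_2)^{m-n}=1$.
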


The $m=n=1$ cases of these two results were proved previously in \cite{AllombertBiluMadariaga15} and \cite{BiluLucaMadariaga16} respectively. For partial results on the $x_1 = x_2$ case of Theorem~\ref{thm:sum2}, see \cite{BiluLucaMadariaga20}.

\section{Primitive element theorems for singular moduli}\label{sec:prim}

In \cite{FayeRiffaut18}, Faye and Riffaut proved the following primitive element theorems for sums and products of singular moduli. See also the generalisation of Theorem~\ref{thm:fieldsum} obtained in \cite{BiluFayeZhu19}.

\begin{thm}[{\cite[Theorem~4.1]{FayeRiffaut18}}]\label{thm:fieldsum}
	Let $x_1, x_2$ be distinct singular moduli of discriminants $\Delta_1, \Delta_2$ respectively. Let $\epsilon \in \{\pm 1\}$. Then $\Q(x_1 + \epsilon x_2) = \Q(x_1, x_2)$ unless $\Delta_1 = \Delta_2$ and $\epsilon =1 $. If $\Delta_1 = \Delta_2$, then $[\Q(x_1, x_2) : \Q(x_1 + x_2)] \leq 2$.
\end{thm}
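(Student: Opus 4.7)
I translate the claim into a counting problem for Galois orbits. Set $\alpha = x_1 + \epsilon x_2$ and $F = \Q(\alpha)$, and let $S$ be the set of pairs $(y_1, y_2)$ of singular moduli --- with $y_i$ of discriminant $\Delta_i$ --- satisfying $y_1 + \epsilon y_2 = \alpha$. Any $\sigma \in \Gal(\overline{\Q}/F)$ maps $(x_1, x_2)$ into $S$, so the $\Gal(\overline{\Q}/F)$-orbit of $(x_1, x_2)$ lies in $S$ and hence $[\Q(x_1, x_2) : F] \leq |S|$. Since $y_1 = x_1$ forces $y_2 = x_2$, it suffices to control pairs $(y_1, y_2) \in S$ with $y_1 \neq x_1$: I must show $|S| = 1$ outside the excluded case and $|S| \leq 2$ inside it.

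The main tool is the dominance of the principal singular modulus. Since $|S|$ is invariant under $\Gal(\overline{\Q}/\Q)$ (which permutes $\alpha$ with its conjugates while moving fibers among themselves), I may conjugate $(x_1, x_2)$ so that, after taking $|\Delta_1| \geq |\Delta_2|$ WLOG, $x_1$ equals the dominant modulus $d_1$ of $\Delta_1$ (using that $x_1$ and $d_1$ are Galois conjugate over $\Q$). The defining relation of $S$ becomes $x_1 - y_1 = \epsilon(y_2 - x_2)$; Propositions~\ref{prop:dom} and~\ref{prop:dominc} give $|x_1 - y_1| \geq 0.9|d_1|$ whenever $y_1 \neq x_1$, while $|y_2 - x_2| \leq 2|d_2|$. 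Combined with the bound~(\ref{eq:ineq1}), this forces
\[
0.9\bigl(e^{\pi\sqrt{|\Delta_1|}} - 2079\bigr) \leq 2\bigl(e^{\pi\sqrt{|\Delta_2|}} + 2079\bigr),
\]
which is contradictory once the discriminant gap is at least a constant multiple of $\sqrt{|\Delta_2|}$. The case $\Delta_1 = \Delta_2 = \Delta$ is then handled via the same reduction: after $x_1 = d$, both $x_2$ and any non-trivial $y_1$ lie in the $0.1|d|$-disk, so if $y_2 \neq d$ then $|y_2 - x_2| \leq 0.2|d|$, contradicting $|x_1 - y_1| \geq 0.9|d|$. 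Hence $y_2 = d = x_1$, forcing $y_1 = x_2$ when $\epsilon = 1$ (the swap, giving $|S| \leq 2$) and $y_1 = 2x_1 - x_2$ when $\epsilon = -1$, which contradicts $|y_1| \leq 0.1|d|$ since $|2x_1 - x_2| \geq 1.9|d|$; thus $|S| = 1$ there.

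The main obstacle is the range of close but distinct discriminants, where $|d_1|$ and $|d_2|$ are comparable and the Fourier-expansion estimate does not separate them cleanly. In this regime the naive magnitude argument is insufficient, and one must supplement it with finer input --- either by checking the remaining pairs via a PARI enumeration up to an effectively computable threshold, or by extracting further constraints from the algebraic structure, e.g.\ by combining Galois shifts that simultaneously place $x_1 = d_1$ and $y_2 = d_2$ in separate fibers and playing the resulting estimates against each other, or by invoking lower bounds on differences of distinct singular moduli in the spirit of Bilu--Faye--Zhu. This near-equal-discriminant range is precisely where the primitive-element statement acquires its content, and it is what I expect to demand the bulk of the effort in a complete write-up.
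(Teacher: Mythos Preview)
Your treatment of the equal-discriminant case is correct and is essentially the same dominance argument the paper (following Faye--Riffaut) uses, recast as a fiber count; the translation $[\Q(x_1,x_2):F]\le |S|$ is exactly the Galois-index statement in disguise.

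The genuine gap is the unequal-discriminant case. Your inequality
\[
0.9\bigl(e^{\pi\sqrt{|\Delta_1|}}-2079\bigr)\le 2\bigl(e^{\pi\sqrt{|\Delta_2|}}+2079\bigr)
\]
leaves infinitely many pairs $(\Delta_1,\Delta_2)$ unresolved: whenever $|\Delta_1|=|\Delta_2|+1$ the ratio $|d_1|/|d_2|$ tends to $1$, so no finite PARI check can close this without further input, and the Bilu--Faye--Zhu separation bounds go in the wrong direction (they are lower bounds on $|y_2-x_2|$, not upper bounds). The key idea you are missing is an \emph{algebraic} reduction to the equal-discriminant case you have already proved. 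If $\sigma$ fixes $\alpha$ but not $x_1$, then
\[
x_1-\sigma(x_1)=\epsilon\bigl(\sigma(x_2)-x_2\bigr).
\]
Now $x_1$ and $\sigma(x_1)$ are distinct singular moduli of the \emph{same} discriminant $\Delta_1$, so by your own $\epsilon=-1$ equal-discriminant result, $\Q(x_1-\sigma(x_1))=\Q(x_1,\sigma(x_1))$; likewise $\Q(\sigma(x_2)-x_2)=\Q(x_2,\sigma(x_2))$. Hence
\[
\Q(x_1,\sigma(x_1))=\Q(x_2,\sigma(x_2)).
\]
This equality of compositums of ring class fields is a very rigid constraint: if the fundamental discriminants differ it forces $\Q(x_1)=\Q(x_2)$, reducing to the finite list in \cite[Table~2]{AllombertBiluMadariaga15}; if they agree it forces $\Delta_1=4\Delta_2$, where $\sqrt{|\Delta_1|}=2\sqrt{|\Delta_2|}$ and your magnitude argument goes through cleanly. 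This is precisely how the paper's proof of the generalisation (Theorem~\ref{thm:fieldsumpower}, specialising to $a=1$) handles the case, and it is the step your plan lacks.
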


\begin{thm}[{\cite[Theorem~5.1]{FayeRiffaut18}}]\label{thm:fieldprod}
	Let $x_1, x_2$ be distinct non-zero singular moduli of discriminants $\Delta_1, \Delta_2$ respectively. Let $\epsilon \in \{\pm 1\}$. Then $\Q(x_1 x_2^{\epsilon}) = \Q(x_1, x_2)$ unless $\Delta_1 = \Delta_2$ and $\epsilon =1 $. If $\Delta_1 = \Delta_2$, then $[\Q(x_1, x_2) : \Q(x_1 x_2)] \leq 2$.
\end{thm}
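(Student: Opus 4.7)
The plan is to prove Theorem~\ref{thm:fieldprod} by a Galois-theoretic argument, using the absolute-value bounds on singular moduli from Section~\ref{subsec:bd}. Set $\alpha = x_1 x_2^{\epsilon}$ and let $L$ be a Galois closure of $\Q(x_1, x_2)/\Q$. Then
\[ [\Q(x_1, x_2) : \Q(\alpha)] = \bigl|\{(\sigma(x_1), \sigma(x_2)) : \sigma \in \Gal(L/\Q(\alpha))\}\bigr|, \]
and $\sigma$ lies in $\Gal(L/\Q(\alpha))$ precisely when $\sigma(x_1) \sigma(x_2)^{\epsilon} = x_1 x_2^{\epsilon}$. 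Since $\sigma(x_i)$ is necessarily a singular modulus of discriminant $\Delta_i$, the theorem reduces to controlling the set of pairs $(y_1, y_2)$ of singular moduli of discriminants $(\Delta_1, \Delta_2)$ that satisfy $y_1 y_2^{\epsilon} = x_1 x_2^{\epsilon}$.

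I would first apply an element $\tau \in \Gal(L/\Q)$ to both sides of this identity, replacing the triple $(x_1, x_2, \alpha)$ by $(\tau x_1, \tau x_2, \tau \alpha)$; since this transformation preserves both the shape of the equation and the relevant field degrees, I may assume without loss of generality that $x_1 = x_1^{\ast}$ is the dominant modulus of discriminant $\Delta_1$. Now suppose $(y_1, y_2) \neq (x_1, x_2)$ is a solution. Because $x_1, x_2$ are non-zero, either coordinate determines the other via the equation, so I may further assume $y_1 \neq x_1$. Proposition~\ref{prop:dom} then yields $|y_1| \leq 0.1\,|x_1|$, and hence
\[ \bigl|y_2/x_2\bigr|^{\epsilon} \;=\; |x_1|/|y_1| \;\geq\; 10. \]
Writing $(a_i, b_i, c_i)$ and $(a_i', b_i', c_i')$ in $T_{\Delta_i}$ for the triples attached to $x_i$ and $y_i$ respectively, an application of \eqref{eq:ineq1} converts the displayed bound into a near-equality among the four quantities $\pi|\Delta_i|^{1/2}/a$ and $\pi|\Delta_i|^{1/2}/a'$, while Lemma~\ref{lem:dom} constrains the admissible values of these $a$'s.

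When $\Delta_1 \neq \Delta_2$, the two exponential scales $e^{\pi|\Delta_1|^{1/2}/\ast}$ and $e^{\pi|\Delta_2|^{1/2}/\ast}$ behave independently once $|\Delta_i|$ are large, forcing $a_i = a_i'$; combined with the normalisation $a_1 = 1$ and a short finite analysis, this gives $y_1 = x_1$, a contradiction. When $\Delta_1 = \Delta_2$, the identity reduces to the positive-integer equation $1/a_1' + \epsilon/a_2' = 1/a_1 + \epsilon/a_2$: for $\epsilon = -1$ this forces $(a_1', a_2') = (a_1, a_2)$ and hence $(y_1, y_2) = (x_1, x_2)$, while for $\epsilon = 1$ it additionally allows the swap $\{a_1', a_2'\} = \{a_1, a_2\}$, corresponding to $(y_1, y_2) = (x_2, x_1)$ and so yielding the asserted bound $[\Q(x_1,x_2):\Q(x_1 x_2)] \leq 2$. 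The main obstacle will be the small-discriminant regime where the estimate \eqref{eq:ineq1} is not sharp enough to pin down the $a$-values from archimedean data alone; here I would rely on a direct enumeration of the finitely many singular moduli involved (as elsewhere in the paper, via PARI), supplemented by the field-theoretic constraints in Lemma~\ref{lem:samefield} to exclude the remaining accidental coincidences.
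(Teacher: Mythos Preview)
The paper does not itself prove Theorem~\ref{thm:fieldprod}; it is quoted from Faye--Riffaut. The paper does, however, prove the generalisation Theorem~\ref{thm:fieldprodpower} by following the Faye--Riffaut argument, so one can compare your proposal against that.

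Your Galois set-up and the reduction to counting pairs $(y_1,y_2)$ of singular moduli with $y_1y_2^{\epsilon}=x_1x_2^{\epsilon}$ is exactly how the paper (and Faye--Riffaut) begin. The divergence, and the gap, is in how you extract information from the archimedean estimate. You claim that \eqref{eq:ineq1} ``reduces'' the identity $|y_1y_2^{\epsilon}|=|x_1x_2^{\epsilon}|$ to the exact Diophantine equation $1/a_1'+\epsilon/a_2'=1/a_1+\epsilon/a_2$. It does not: \eqref{eq:ineq1} only gives $\log|x|=\pi|\Delta|^{1/2}/a+O(1)$, and since the $a$-values range up to $\asymp|\Delta|^{1/2}$, the spacing between admissible values of $1/a'+\epsilon/b'$ can be as small as $\asymp|\Delta|^{-2}$, far finer than the error term allows you to resolve. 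So the passage from ``near-equality'' to the integer equation is unjustified. The same issue undermines your treatment of the unequal-discriminant case: the assertion that the two scales $e^{\pi|\Delta_1|^{1/2}/\ast}$ and $e^{\pi|\Delta_2|^{1/2}/\ast}$ ``behave independently'' is simply false when $|\Delta_1|^{1/2}/|\Delta_2|^{1/2}\in\Q$, and Lemma~\ref{lem:samefield} shows that the case $\Delta_1=4\Delta_2$ genuinely occurs among pairs with $\Q(x_1)=\Q(x_2)$.

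What Faye--Riffaut (and the paper, in the proof of Theorem~\ref{thm:fieldprodpower}) do instead is coarser but rigorous. For $\Delta_1=\Delta_2$ and $\epsilon=1$, the hypothesis $[G:H]>2$ is used not merely to produce some $\sigma$ with $\sigma(x_1)\neq x_1$, but one with $\sigma(x_1)\notin\{x_1,x_2\}$; the product relation then forces $\sigma(x_2)\neq x_1$ as well, so \emph{neither} $\sigma(x_i)$ is dominant. One then compares $|x_1x_2|$ (containing the dominant factor) directly with $|\sigma(x_1)\sigma(x_2)|\leq(e^{\pi|\Delta|^{1/2}/2}+2079)^2$, obtaining a contradiction for $|\Delta|$ beyond an explicit bound; the finitely many remaining $\Delta$ are eliminated by checking that $x_1x_2/\sigma(x_1)\sigma(x_2)$ is never a root of unity in the relevant field. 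The $\epsilon=-1$ case is parallel. For $\Delta_1\neq\Delta_2$, the key idea you are missing is to bootstrap from the equal-discriminant case: from $x_1x_2^{\epsilon}=\sigma(x_1)\sigma(x_2)^{\epsilon}$ one gets $x_1/\sigma(x_1)=(\sigma(x_2)/x_2)^{\epsilon}$, hence $\Q(x_1,\sigma(x_1))=\Q(x_2,\sigma(x_2))$ by the case already proved, and then the field-theoretic classification (Lemma~\ref{lem:samefield} and the analogous results in \cite{FayeRiffaut18,BiluFayeZhu19}) pins down the finitely many possible $(\Delta_1,\Delta_2)$, which are again disposed of by a finite check.
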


We also have the following result of Riffaut \cite{Riffaut19}. (In \cite{Riffaut19}, it is required that the discriminant $\Delta$ of $x$ satisfies $\lvert \Delta \rvert \geq 11$. We may dispense with this condition because the result is trivial for $\lvert \Delta \rvert < 11$, since all singular moduli having such discriminant are rational.)

\begin{prop}[{\cite[Lemma~2.6]{Riffaut19}}]\label{prop:fieldpower}
	Let $x$ be a singular modulus and $a \in \Z \setminus \{0\}$. Then $\Q(x^a) = \Q(x)$.
\end{prop}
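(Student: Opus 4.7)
The plan is to argue by contradiction using the Galois action on singular moduli of a common discriminant, together with the dominance inequality in Proposition~\ref{prop:dom}. The inclusion $\Q(x^a) \subseteq \Q(x)$ is immediate, so the task is to rule out strict inclusion. First reduce to the case $a > 0$ and $x \neq 0$: if $x = 0$ then $\Q(x^a) = \Q = \Q(x)$, and if $a < 0$ with $x \neq 0$ then $\Q(x^a) = \Q(x^{-a})$ since $x^a$ and $x^{-a}$ are mutual inverses in $\Q(x)^\times$.

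Now suppose, for contradiction, that $\Q(x^a) \subsetneq \Q(x)$. Let $\Delta$ be the discriminant of $x$, so that the Galois closure of $\Q(x)/\Q$ is contained in the ring class field $H_\Delta$, over which all singular moduli of discriminant $\Delta$ form a single Galois orbit. By Galois theory there is some $\sigma \in \Gal(\overline{\Q}/\Q)$ fixing $x^a$ but not $x$; set $y = \sigma(x)$. Then $y$ is a singular modulus of discriminant $\Delta$, $y \neq x$, and $y^a = x^a$. Since both $x$ and $y$ are nonzero, $\zeta := x/y$ is a nontrivial $a$-th root of unity; crucially, $|x| = |y|$, and the same holds after applying any element of $\Gal(\overline{\Q}/\Q)$.

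The key step is to transport this equality to the dominant singular modulus $x_{\text{dom}}$ of discriminant $\Delta$ (the unique one associated to a triple with $a=1$ by Lemma~\ref{lem:dom}(1)). Choose $\tau \in \Gal(\overline{\Q}/\Q)$ with $\tau(x) = x_{\text{dom}}$, which exists because singular moduli of discriminant $\Delta$ form one Galois orbit over $\Q$. Then $\tau(y)$ is a singular modulus of discriminant $\Delta$, and applying $\tau$ to $x^a = y^a$ gives $\tau(y)^a = x_{\text{dom}}^a$, so $\tau(y)/x_{\text{dom}}$ is again a root of unity, whence $|\tau(y)| = |x_{\text{dom}}|$. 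But Proposition~\ref{prop:dom} says that any singular modulus of discriminant $\Delta$ other than $x_{\text{dom}}$ has absolute value at most $0.1\, |x_{\text{dom}}|$, so we must have $\tau(y) = x_{\text{dom}} = \tau(x)$. Since $\tau$ is a field automorphism this forces $y = x$, contradicting $y \neq x$.

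There is essentially no serious obstacle: the proof is short once one notices that $x_1^a = x_2^a$ for singular moduli $x_1 \ne x_2$ of the same discriminant forces $|x_1| = |x_2|$ in every Galois embedding, and that landing one of these embeddings on $x_{\text{dom}}$ immediately collides with Proposition~\ref{prop:dom}. The only subtlety to check is the reduction to $a > 0$, $x \neq 0$ and the verification that both $x$ and $y$ are nonzero in the contradiction argument, which is clear since $0$ is the unique singular modulus of discriminant $-3$ and so a nonzero singular modulus cannot be Galois-conjugate to $0$.
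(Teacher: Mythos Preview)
Your proof is correct. The paper itself does not give a proof of this proposition: it simply cites \cite[Lemma~2.6]{Riffaut19} and adds the parenthetical remark that the restriction $\lvert \Delta \rvert \geq 11$ in Riffaut's statement is unnecessary because singular moduli with $\lvert \Delta \rvert < 11$ are rational. Your argument is the standard one (and essentially the one Riffaut gives): if $\Q(x^a) \subsetneq \Q(x)$ then some nontrivial conjugate $y$ of $x$ satisfies $y^a = x^a$, hence $\lvert \tau(y) \rvert = \lvert \tau(x) \rvert$ for every automorphism $\tau$, and sending $x$ to the dominant singular modulus of its discriminant contradicts Proposition~\ref{prop:dom}. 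One small phrasing issue: your sentence ``the Galois closure of $\Q(x)/\Q$ is contained in the ring class field $H_\Delta$, over which all singular moduli of discriminant $\Delta$ form a single Galois orbit'' is slightly garbled (they form a single orbit over $\Q$, not over the ring class field), but this is cosmetic and the actual fact you use---that the $\Q$-conjugates of $x$ are precisely the singular moduli of discriminant $\Delta$---is correct and is exactly what is stated in Section~\ref{subsec:singmod}.
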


In this section, we prove the following two results, which can be seen as combining the results of \cite{FayeRiffaut18} with Proposition~\ref{prop:fieldpower}. The proofs of Theorems~\ref{thm:fieldsumpower} and \ref{thm:fieldprodpower} both follow the approach of Faye and Riffaut. 

\begin{thm}\label{thm:fieldsumpower}
	Let $x_1, x_2$ be distinct singular moduli of discriminants $\Delta_1, \Delta_2$ respectively. Let $\epsilon \in \{\pm 1\}$ and $a \in \Z_{>0}$. Then $\Q(x_1^a + \epsilon x_2^a) = \Q(x_1, x_2)$, unless $\Delta_1 = \Delta_2$ and $\epsilon =1 $. If $\Delta_1 = \Delta_2$, then $[\Q(x_1, x_2) : \Q(x_1^a + x_2^a)] \leq 2$.
\end{thm}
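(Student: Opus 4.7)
The plan is to adapt the proof of Theorem~\ref{thm:fieldsum} due to Faye and Riffaut, with Proposition~\ref{prop:fieldpower} providing the bridge between $x_i$ and $x_i^a$. By that proposition, $\Q(x_1, x_2) = \Q(x_1^a, x_2^a) =: L$; set $F := \Q(x_1^a + \epsilon x_2^a) \subseteq L$, so the task reduces to bounding $[L : F]$. Any $F$-embedding $\sigma : L \to \overline{\Q}$ sends each $x_i$ to a singular modulus $y_i := \sigma(x_i)$ of discriminant $\Delta_i$, and $\sigma$ fixing $F$ forces the fundamental relation
\[ y_1^a + \epsilon y_2^a = x_1^a + \epsilon x_2^a, \]
which I will refer to as $(\ast)$. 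Thus $[L : F]$ is at most the number of pairs $(y_1, y_2)$ of singular moduli, with $y_i$ of discriminant $\Delta_i$, satisfying $(\ast)$, and I will show this count equals $1$ unless $\Delta_1 = \Delta_2$ and $\epsilon = 1$, in which case it is at most $2$.

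A key preliminary observation, again from Proposition~\ref{prop:fieldpower}, is that the map $z \mapsto z^a$ is injective on the set of singular moduli of any fixed discriminant: the Galois conjugates of $x_i^a$ over $\Q$ are the $a$-th powers of the $h(\Delta_i)$ singular moduli of discriminant $\Delta_i$, and the equality $[\Q(x_i^a) : \Q] = [\Q(x_i) : \Q] = h(\Delta_i)$ forces these $a$-th powers to be pairwise distinct. Consequently, if $y_1 = x_1$ then $(\ast)$ gives $y_2^a = x_2^a$ and hence $y_2 = x_2$; symmetrically, if $y_2 = x_2$ then $y_1 = x_1$. So I may restrict attention to pairs with $y_1 \neq x_1$ and $y_2 \neq x_2$, aiming to rule them out entirely, except in the exceptional case where the only additional pair is $(y_1, y_2) = (x_2, x_1)$.

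From here I follow the template of \cite{FayeRiffaut18}. Assume without loss of generality $|\Delta_1| \geq |\Delta_2|$. Applying further Galois conjugation to $(\ast)$ through any $\Q$-embedding, I transport the four singular moduli so that the dominant singular modulus $x_1^{\mathrm{dom}}$ of discriminant $\Delta_1$ appears among the transported values. The bound in \eqref{eq:ineq1} combined with Propositions~\ref{prop:dom} and~\ref{prop:dominc} then shows that $|x_1^{\mathrm{dom}}|$ exceeds by a factor of at least $10$ the absolute value of every other singular modulus of discriminant $\Delta_1$ or $\Delta_2$, and this disparity is only magnified after raising to the $a$-th power. Consequently, $(x_1^{\mathrm{dom}})^a$ cannot cancel in the transported version of $(\ast)$ unless it appears on both sides, which pins down either $y_1 = x_1$, giving the desired contradiction, or, in the equal-discriminant case, the swap $y_1 = x_2$ and $y_2 = x_1$; inspection of the resulting equation forces $\epsilon = 1$.

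The main obstacle will be handling the size comparisons uniformly across all discriminants. The dominance bounds are strong enough once $|\Delta_1|$ exceeds an explicit threshold, but for small discriminants the dominant--subdominant gap is insufficient by itself to exclude all cancellations in $(\ast)$; I anticipate treating the residual small-discriminant cases by direct computation in PARI, enumerating the finitely many admissible quadruples $(x_1, x_2, y_1, y_2)$ and checking that $(\ast)$ fails outside the tabulated configurations. A particularly delicate sub-case is $\Delta_1 = \Delta_2$ with $\epsilon = -1$, where no swap is permitted and the size argument must rule out all non-trivial solutions of $(\ast)$ outright, rather than merely pinning them down to a single transposition.
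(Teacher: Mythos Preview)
Your plan works cleanly when $\Delta_1 = \Delta_2$, and there it matches the paper's argument almost line for line. The genuine gap is in the case $\Delta_1 \neq \Delta_2$. You assert that Propositions~\ref{prop:dom} and~\ref{prop:dominc} give a factor-of-$10$ separation between $|x_1^{\mathrm{dom}}|$ and every singular modulus of discriminant $\Delta_1$ \emph{or} $\Delta_2$. That is false: Proposition~\ref{prop:dominc} gives only a \emph{strict} inequality, and the ratio $|x_1^{\mathrm{dom}}|/|x_2^{\mathrm{dom}}|$ can be arbitrarily close to $1$ when $|\Delta_1|-|\Delta_2|$ is small relative to $|\Delta_1|^{1/2}$. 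Concretely, after transporting so that $\tau(x_1)=x_1^{\mathrm{dom}}$, nothing prevents $\tau(x_2)$ (or $\tau(y_2)$) from being the dominant singular modulus of discriminant $\Delta_2$, and then the rearranged equation $\tau(x_1)^a-\tau(y_1)^a=\epsilon(\tau(y_2)^a-\tau(x_2)^a)$ only yields a constraint of the shape $|x_1^{\mathrm{dom}}|\le C\,|x_2^{\mathrm{dom}}|$ with $C$ close to $1$. This does \emph{not} reduce to finitely many $(\Delta_1,\Delta_2)$, so your proposed PARI sweep over ``small discriminants'' cannot close the argument.

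The paper's remedy is exactly the missing idea. It first establishes the equal-discriminant $\epsilon=-1$ case by the dominance argument (which is sound there), and then for $\Delta_1\neq\Delta_2$ rewrites $(\ast)$ as $x_1^a-\sigma(x_1)^a=\epsilon(\sigma(x_2)^a-x_2^a)$ and applies the just-proved equal-discriminant result to \emph{each side separately}. This yields the field equality $\Q(x_1,\sigma(x_1))=\Q(x_2,\sigma(x_2))$, which is a very rigid condition: by \cite[Corollary~3.3]{FayeRiffaut18} and \cite[Lemma~7.2]{BiluFayeZhu19} it forces either $\Delta_1,\Delta_2$ to lie in the finite table of Lemma~\ref{lem:samefield}, or $\Delta_1=4\Delta_2$. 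Only then does one get finitely many residual cases amenable to the inequality argument or a PARI check. Your outline should incorporate this reduction step; without it, the $\Delta_1\neq\Delta_2$ case does not go through.
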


\begin{thm}\label{thm:fieldprodpower}
	Let $x_1, x_2$ be distinct non-zero singular moduli of discriminants $\Delta_1, \Delta_2$ respectively. Let $\epsilon \in \{\pm 1\}$ and $a \in \Z \setminus \{0\}$. Then $\Q(x_1^a x_2^{\epsilon a}) = \Q(x_1, x_2)$, unless $\Delta_1 = \Delta_2$ and $\epsilon =1 $. If $\Delta_1 = \Delta_2$, then $[\Q(x_1, x_2) : \Q(x_1^a x_2^a)] \leq 2$.
\end{thm}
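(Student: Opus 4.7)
The plan is to adapt Faye and Riffaut's proof of Theorem~\ref{thm:fieldprod}, combined with Proposition~\ref{prop:fieldpower}, to handle the extra $a$-th power. Write $z = x_1 x_2^{\epsilon}$ and $y = z^a$, so that $\Q(y) \subseteq \Q(z) \subseteq \Q(x_1, x_2)$.

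First consider the generic case where $\Delta_1 \neq \Delta_2$ or $\epsilon = -1$. Theorem~\ref{thm:fieldprod} then gives $\Q(z) = \Q(x_1, x_2)$, so it suffices to show $\Q(y) = \Q(z)$. Let $L = H_{\Delta_1} H_{\Delta_2}$, which is the Galois closure of $\Q(x_1, x_2)$ over $\Q$, and fix $\sigma \in \Gal(L / \Q(y))$. Then $\sigma(z)^a = z^a$, so
\[ \sigma(x_1) \sigma(x_2)^{\epsilon} = \zeta \, x_1 x_2^{\epsilon} \]
for some $a$-th root of unity $\zeta \in L$, where $\sigma(x_1), \sigma(x_2)$ are singular moduli of discriminants $\Delta_1, \Delta_2$ respectively. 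If I can force $\zeta = 1$, then $\sigma$ fixes $z$, and by Theorem~\ref{thm:fieldprod} it also fixes $\Q(x_1, x_2)$, so $\Q(y) = \Q(x_1, x_2)$.

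To show $\zeta = 1$, observe that $\zeta \in L = H_{\Delta_1} H_{\Delta_2}$. Each $H_{\Delta_i}$ is a generalised dihedral extension of $\Q$, so its maximal abelian subfield over $\Q$ (the genus field) is a compositum of quadratic fields; the compositum $L$ inherits the same property, so any root of unity in $L$ has order dividing $24$. There are therefore only finitely many nontrivial candidates for $\zeta$, and for each I rule it out using the above identity: taking absolute values gives
\[ |\sigma(x_1)| \cdot |\sigma(x_2)|^{\epsilon} = |x_1| \cdot |x_2|^{\epsilon}, \]
and together with the bound \eqref{eq:ineq1}, Propositions~\ref{prop:dom} and \ref{prop:dominc}, and Lemma~\ref{lem:dom}, this should restrict the quadruple $(x_1, x_2, \sigma(x_1), \sigma(x_2))$ to a small list amenable to a direct check, for instance in PARI.

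For the degenerate case $\Delta_1 = \Delta_2$ and $\epsilon = 1$, Theorem~\ref{thm:fieldprod} already gives $[\Q(x_1, x_2) : \Q(z)] \leq 2$, and the same Galois analysis applied to $\sigma \in \Gal(L / \Q(y))$ shows $\Q(z) = \Q(y)$, yielding $[\Q(x_1, x_2) : \Q(y)] \leq 2$. The main obstacle is the root-of-unity step in the generic case: although $\zeta$ ranges over an explicit finite set, for each nontrivial value one must genuinely rule out a quadruple of singular moduli satisfying $\sigma(x_1) \sigma(x_2)^{\epsilon} = \zeta \, x_1 x_2^{\epsilon}$, and the pure absolute-value argument may need to be supplemented by exploiting complex conjugation or the finer genus-theoretic structure of $H_{\Delta_1} H_{\Delta_2}$.
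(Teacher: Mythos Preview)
Your approach is essentially the paper's: it too reduces to showing that $x_1 x_2^{\epsilon}\big/\bigl(\sigma(x_1)\sigma(x_2)^{\epsilon}\bigr)$ is never a nontrivial root of unity in $L$ for $\sigma$ moving $x_1$ (equivalently $x_2$), first bounding $|\Delta_i|$ via the Faye--Riffaut absolute-value estimates---which transfer verbatim because $|u^a|<|v^a| \Leftrightarrow |u|<|v|$---and then disposing of the finitely many small discriminants by a PARI root-of-unity check. Your factoring through the $a=1$ result is a legitimate reorganization but saves nothing, since the bootstrap step $\Q(z^a)=\Q(z)$ demands exactly the same bounds and the same verification; in particular your closing worry is unfounded---absolute values plus PARI suffice, with no appeal to complex conjugation or finer genus theory needed (and note that in this paper $H_\Delta$ denotes the Hilbert class polynomial, not the ring class field).
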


The proof of the $a=1$ case of our Theorem~\ref{thm:prod} in \cite{Fowler20} used Theorem~\ref{thm:fieldprod}. In this paper, our proofs of Theorems~\ref{thm:sum} and \ref{thm:prod} will use Theorems~\ref{thm:fieldsumpower} and \ref{thm:fieldprodpower} in a corresponding way.

\subsection{Proof of Theorem~\ref{thm:fieldsumpower}}

Let $x_1, x_2$ be distinct singular moduli of discriminants $\Delta_1, \Delta_2$ respectively. Write $D_1, D_2$ for their corresponding fundamental discriminants. Let $\epsilon \in \{\pm 1\}$ and $a \in \Z_{>0}$. Observe that Proposition~\ref{prop:fieldpower} implies that
\[ \Q(x_1, x_2) = \Q(x_1^a, x_2^a).\]
Let $L$ be the Galois closure of $\Q(x_1, x_2) / \Q$. Define
\[G= \Gal(L / \Q(x_1^a + \epsilon x_2^a))\]
and
\[ H = \Gal(L/ \Q(x_1, x_2)).\]
So $H \leq G$, and $H=G$ if and only if $\Q(x_1, x_2) = \Q(x_1^a + \epsilon x_2^a)$. 

Let $\sigma \in G$. Then 
\[ x_1^a + \epsilon x_2^a = \sigma(x_1^a) + \epsilon \sigma(x_2^a).\]
So $\sigma(x_1^a) = x_1^a$ if and only if $\sigma(x_2^a) = x_2^a$. Since $\Q(x_1^a)=\Q(x_1)$, we have that $\sigma(x_1^a) = x_1^a$ if and only if $\sigma(x_1) = x_1$. Similarly, $\sigma(x_2^a) = x_2^a$ if and only if $\sigma(x_2) = x_2$. Consequently,
\begin{align*}
	 H &= \{ \sigma \in G : \sigma(x_1^a)=x_1^a\}= \{ \sigma \in G : \sigma(x_1)=x_1\}\\
	 &= \{ \sigma \in G : \sigma(x_2^a)=x_2^a\}= \{ \sigma \in G : \sigma(x_2)=x_2\}.
	 \end{align*}
 
 If either $x_1 \in \Q$ or $x_2 \in \Q$, then the desired result follows from Proposition~\ref{prop:fieldpower}. So we will assume that $x_1, x_2 \notin \Q$.
 
 \subsubsection{The case where $\Delta_1 = \Delta_2$.} 
 Write $\Delta = \Delta_1 = \Delta_2$. We may assume, after applying an automorphism of $\C$ if necessary, that $x_1$ is dominant, and so $x_2$ is not dominant. By \eqref{eq:ineq1}, one therefore has that
 \begin{align}\label{eq:ineq2}
 \lvert x_1^a + \epsilon x_2^a \rvert &\geq \lvert x_1 \rvert^a - \lvert x_2 \rvert^a \nonumber \\
 	&\geq (e^{\pi \lvert \Delta \rvert^{1/2}} - 2079)^a - (e^{\pi \lvert \Delta \rvert^{1/2}/2} + 2079)^a.
 	\end{align}
 
 First, assume that $\epsilon = 1$. Suppose then that $[\Q(x_1, x_2) : \Q(x_1^a + x_2^a)] > 2$. Then $[G : H] > 2$. So there exists $\sigma \in G$ such that $\sigma(x_1^a) \neq x_1^a$ and $\sigma(x_1^a) \neq x_2^a$. Since $x_1^a + x_2^a = \sigma(x_1^a) + \sigma(x_2^a)$, we thus have that $\sigma(x_2^a) \neq x_1^a$. In particular, $\sigma(x_1), \sigma(x_2) \neq x_1$, and so neither of $\sigma(x_1), \sigma(x_2)$ is dominant.
 
 Therefore,
 \begin{align}\label{eq:ineq3}
 	\lvert x_1^a + x_2^a \rvert &= \lvert \sigma(x_1)^a + \sigma(x_2)^a \rvert \nonumber\\
 	&\leq 2 (e^{\pi \lvert \Delta \rvert^{1/2}/2} + 2079)^a.
  \end{align}
The bounds \eqref{eq:ineq2} and \eqref{eq:ineq3} are incompatible once 
\[ (e^{\pi \lvert \Delta \rvert^{1/2}} - 2079)^a > 3 (e^{\pi \lvert \Delta \rvert^{1/2}/2} + 2079)^a.\]
In particular, they are incompatible when
\[ \frac{e^{\pi \lvert \Delta \rvert^{1/2}} - 2079}{e^{\pi \lvert \Delta \rvert^{1/2}/2} + 2079} > 3,\]
which happens for all $\lvert \Delta \rvert \geq 9$. Since $\lvert \Delta \rvert < 9$ implies that $x_1, x_2 \in \Q$, we are done in this case.

Now for $\epsilon = -1$. Suppose that $\Q(x_1, x_2) \neq \Q(x_1^a - x_2^a)$. Then $G \neq H$. So there exists $\sigma \in G$ such that $\sigma(x_1) \neq x_1$. Suppose that $\sigma(x_2) = x_1$. Then the equality
\[ x_1^a - x_2^a = \sigma(x_1)^a - \sigma(x_2)^a\]
implies that
\[ 2 x_1^a = x_2^a + \sigma(x_1)^a.\]
Since $x_1$ is dominant and neither of $x_2, \sigma(x_1)$ is dominant, the absolute value of the right hand side of this equation is at most $2 (0.1 \lvert x_1 \rvert)^a$ by Proposition~\ref{prop:dom}. Clearly, this cannot happen. So $\sigma(x_2) \neq x_1$. 

In particular, $\sigma(x_2)$ is not dominant. Thus, by \eqref{eq:ineq1},
\begin{align}\label{eq:ineq4}
	\lvert x_1^a - x_2^a \rvert &= \lvert \sigma(x_1)^a - \sigma(x_2)^a \rvert \nonumber\\
	&\leq 2 (e^{\pi \lvert \Delta \rvert^{1/2}/2} + 2079)^a.
\end{align}
The bounds \eqref{eq:ineq2} and \eqref{eq:ineq4} are incompatible once 
\[ (e^{\pi \lvert \Delta \rvert^{1/2}} - 2079)^a > 3 (e^{\pi \lvert \Delta \rvert^{1/2}/2} + 2079)^a,\]
and in particular for all $\lvert \Delta \rvert \geq 9$. As above, this completes the proof in this case, since $\lvert \Delta \rvert < 9$ implies that $x_1, x_2 \in \Q$.

\subsubsection{The case where $\Delta_1 \neq \Delta_2$.}
Without loss of generality, we may assume that $\lvert \Delta_1 \rvert > \lvert \Delta_2 \rvert$. Suppose that $\Q(x_1, x_2) \neq \Q(x_1^a + \epsilon x_2^a)$. Then $G \neq H$. So there exists $\sigma \in G$ such that $\sigma(x_1) \neq x_1$ and $\sigma(x_2) \neq x_2$. Since
\[ x_1^a + \epsilon x_2^a = \sigma(x_1)^a + \epsilon \sigma(x_2)^a,\]
we obtain that
\[ \Q(x_1^a - \sigma(x_1)^a) = \Q(x_2^a - \sigma(x_2)^a).\]
Thus, by the already established equal discriminant case,
\[ \Q(x_1, \sigma(x_1)) = \Q(x_2, \sigma(x_2)).\]

Suppose first that $D_1 \neq D_2$. Then, by \cite[Corollary~3.3]{FayeRiffaut18}, $\Q(x_1)=\Q(x_2)$, and so $\Delta_1, \Delta_2$ are listed in \cite[Table~2]{AllombertBiluMadariaga15} by Lemma~\ref{lem:samefield}. We may assume that $x_1$ is dominant. So $\sigma(x_1)$ is not dominant. 

Observe that
\[1+ \epsilon \Big (\frac{x_2}{x_1}\Big)^a= \Big(\frac{\sigma(x_1)}{x_1}\Big)^a + \epsilon \Big(\frac{\sigma(x_2)}{x_1}\Big)^a.\]
Note that
\[ \Big\lvert 1+ \epsilon \Big(\frac{x_2}{x_1}\Big)^a \Big\rvert \geq 1 - \frac{\lvert x_2 \rvert}{\lvert x_1 \rvert},\]
since $\lvert x_2 \rvert < \lvert x_1 \rvert$. Further,
\[ \Big\lvert \Big(\frac{\sigma(x_1)}{x_1}\Big)^a + \epsilon \Big(\frac{\sigma(x_2)}{x_1}\Big)^a \Big\rvert \leq \frac{\lvert \sigma(x_1) \rvert}{\lvert x_1 \rvert} + \frac{\lvert \sigma(x_2) \rvert}{\lvert x_1 \rvert}, \]
since $\lvert \sigma(x_1) \rvert, \lvert \sigma(x_2) \rvert < \lvert x_1 \rvert$.

Write $x_2'$ for the dominant singular modulus of discriminant $\Delta_2$. Let $x_1'$ be a (not necessarily unique) singular modulus of discriminant $\Delta_1$ such that the only singular modulus of discriminant $\Delta_1$ with strictly larger absolute value than $x_1'$ is the dominant singular modulus $x_1$. (Such a singular modulus must exist since by assumption $x_1 \notin \Q$.) To complete the proof in this case, it suffices to verify that
\[ 1- \frac{\lvert x_2' \rvert}{\lvert x_1 \rvert} > \frac{\lvert x_1' \rvert}{\lvert x_1 \rvert} + \frac{\lvert x_2' \rvert}{\lvert x_1 \rvert}\]
for every ordered pair $(\Delta_1, \Delta_2)$ such that: $\Delta_1, \Delta_2$ appear in the same row of \cite[Table~2]{AllombertBiluMadariaga15} with $\lvert \Delta_1 \rvert > \lvert \Delta_2 \rvert$ and $h(\Delta_1) > 1$.
This may be done by a routine computation in PARI.

Now suppose that $D_1 = D_2$. In this case (see e.g. \cite[Lemma~7.2]{BiluFayeZhu19}), the identity $\Q(x_1, \sigma(x_1)) = \Q(x_2, \sigma(x_2))$ implies that $\Delta_1 = 4 \Delta_2$. Write $\Delta = \Delta_2$. As usual, take $x_1$ to be dominant. Then
\[ \lvert x_1^a + \epsilon x_2^a \rvert \geq (e^{2 \pi \lvert \Delta \rvert^{1/2}} - 2079)^a - (e^{\pi \lvert \Delta \rvert^{1/2}} + 2079)^a.\]
We also have that
\begin{align*}
	\lvert x_1^a + \epsilon x_2^a \rvert &= \lvert \sigma(x_1)^a + \epsilon \sigma(x_2)^a \rvert\\
	&\leq 2(e^{\pi \lvert \Delta \rvert^{1/2}} + 2079)^a,
\end{align*}
since $\sigma(x_1) \neq x_1$. These upper and lower bounds are incompatible once 
\[ (e^{2 \pi \lvert \Delta \rvert^{1/2}} - 2079)^a > 3 (e^{\pi \lvert \Delta \rvert^{1/2}} + 2079)^a.\]
In particular, they are incompatible when
\[ \frac{e^{2 \pi \lvert \Delta \rvert^{1/2}} - 2079}{e^{\pi \lvert \Delta \rvert^{1/2}} + 2079} > 3,\]
which happens for all $\lvert \Delta \rvert \geq 3$, i.e. for all possible $\Delta$.

\subsection{Proof of Theorem~\ref{thm:fieldprodpower}}

Let $x_1, x_2$ be distinct non-zero singular moduli of discriminants $\Delta_1, \Delta_2$ respectively. Let $\epsilon \in \{\pm 1\}$ and $a \in \Z \setminus \{0\}$. Clearly, it is enough to consider $a \geq 1$. Observe that Proposition~\ref{prop:fieldpower} implies that
\[ \Q(x_1, x_2) = \Q(x_1^a, x_2^a).\]
Let $L$ be the Galois closure of $\Q(x_1, x_2) / \Q$. Define
\[G= \Gal(L / \Q(x_1^a x_2^{\epsilon a}))\]
and
\[ H = \Gal(L/ \Q(x_1, x_2)).\]
So $H \leq G$, and $H=G$ if and only if $\Q(x_1, x_2) = \Q(x_1^a x_2^{\epsilon a})$. By analogous arguments to those in the proof of Theorem~\ref{thm:fieldsumpower}, we have that
\begin{align*}
	H &= \{ \sigma \in G : \sigma(x_1^a)=x_1^a\}= \{ \sigma \in G : \sigma(x_1)=x_1\}\\
	&= \{ \sigma \in G : \sigma(x_2^a)=x_2^a\}= \{ \sigma \in G : \sigma(x_2)=x_2\}.
\end{align*}

One may now prove Theorem~\ref{thm:fieldprodpower} by following the proof of \cite[Theorem~5.1]{FayeRiffaut18}, thanks to the elementary fact that
\[ \lvert (xy)^a \rvert < \lvert (wz)^a \rvert \mbox{ if and only if } \lvert xy \rvert < \lvert wz \rvert.\]

Suppose first that $\Delta_1 = \Delta_2$ and $\epsilon =1$. Write $\Delta = \Delta_1 = \Delta_2$. In this case, suppose that $[\Q(x_1, x_2) : \Q(x_1^a x_2^a)] > 2$. Then $[G : H] > 2$. So there exists $\sigma \in G$ such that $\sigma(x_1^a) \neq x_1^a$ and $\sigma(x_1^a) \neq x_2^a$. We may assume that $x_1$ is dominant. Since $x_1^a x_2^a = \sigma(x_1^a) \sigma(x_2^a)$, we have that $\sigma(x_2^a) \neq x_1^a$. In particular, $\sigma(x_1), \sigma(x_2) \neq x_1$, and so neither of $\sigma(x_1), \sigma(x_2)$ is dominant. We therefore obtain, exactly as in \cite[\S5.1.1]{FayeRiffaut18}, that $\lvert \Delta \rvert \leq 395$.

To deal with the case where $\lvert \Delta \rvert \leq 395$, we do the following. Since $x_1^a x_2^a = \sigma(x_1^a) \sigma(x_2^a)$, we have that
\[ \frac{x_1 x_2}{\sigma(x_1) \sigma(x_2)}\]
is a root of unity, which is an element of the field $L$. For each possible choice of singular modulus $x_2$ of discriminant $\Delta$ such that $x_2 \neq x_1$ and for each conjugate $(x_1', x_2')$ of $(x_1, x_2)$ such that $x_1', x_2' \neq x_1$, we may verify in PARI whether 
\[ \frac{x_1 x_2}{x_1' x_2'}\]
is a root of unity in $L$. If none are roots of unity, then we may eliminate the corresponding discriminant $\Delta$. In this way, we complete the proof of Theorem~\ref{thm:fieldprodpower} in this case.

Now suppose that $\Delta_1 = \Delta_2$ and $\epsilon =-1$. Write $\Delta = \Delta_1 = \Delta_2$. In this case, suppose that $\Q(x_1, x_2) \neq \Q(x_1^a x_2^a)$. In this case, we obtain, exactly as in \cite[\S5.1.2]{FayeRiffaut18}, that $\lvert \Delta \rvert \leq 395$. These small values of $\Delta$ may then be handled in an analogous way to that described above.

The cases where $\Delta_1 \neq \Delta_2$ are then treated similarly. We bound $\Delta_1, \Delta_2$ by the same argument as in \cite[\S5]{FayeRiffaut18}, and then eliminate these remaining small discriminants by the process described above. This completes the proof of Theorem~\ref{thm:fieldprodpower}.

\section{The $\epsilon_1=\epsilon_2=\epsilon_3$ case of Theorem~\ref{thm:sum}}\label{sec:sum}

We now begin the proof of Theorem~\ref{thm:sum}. The ``if'' direction is immediate. We will prove the ``only if''. In this section, we prove the ``only if'' direction of Theorem~\ref{thm:sum} in the case that $\epsilon_1=\epsilon_2=\epsilon_3$.  

Suppose then that $x_1, x_2, x_3$ are singular moduli such that 
\[ \epsilon_1 x_1^a + \epsilon_2 x_2^a + \epsilon_3 x_3^a \in \Q\]
for some $a \in \Z_{>0}$ and $\epsilon_1=\epsilon_2=\epsilon_3 \in \{\pm 1\}$. Clearly, we may assume that $\epsilon_1=\epsilon_2=\epsilon_3=1$. Write $\Delta_i$ for the discriminant of $x_i$ and $h_i = h(\Delta_i)$ for the corresponding class number. We will show that we must be in one of cases (1), (3), or (4) of Theorem~\ref{thm:sum}.

If $x_1 = x_2 = x_3$, then, by Proposition~\ref{prop:fieldpower}, $x_1, x_2, x_3 \in \Q$. If $x_i = x_j \neq x_k$, then $2 x_i^a + x_k^a \in \Q$. Then, by Theorem~\ref{thm:sum2}, either $x_1, x_2, x_3 \in \Q$ or $\Q(x_i) = \Q(x_k)$ and this field has degree $2$ over $\Q$. In the first case, we are done. 

We now show that the second case cannot occur. Suppose then that $\Q(x_i) = \Q(x_k)$ and this field has degree $2$ over $\Q$. If $\Delta_i = \Delta_k$, then $x_i^a + x_k^a \in \Q$, since $x_i, x_k$ are the two roots of the polynomial $H_{\Delta_i}$. But $x_i^a \notin \Q$ by Proposition~\ref{prop:fieldpower}. Hence, $2 x_i^a + x_k^a \notin \Q$, a contradiction. 

So we must have that $\Delta_i \neq \Delta_k$. Since $x_i, x_k$ are both degree $2$, they each have a unique conjugate, which we denote $x_i', x_k'$ respectively. Since $\Q(x_i) = \Q(x_k)$, one has that 
\[ 2 x_i^a + x_k^a = 2 (x_i')^a + (x_k')^a.\]

Suppose first that $\lvert \Delta_i \rvert > \lvert \Delta_k \rvert$. Without loss of generality, we may assume that $x_i$ is dominant. We have that
\[ 2 = 2 \Big(\frac{x_i'}{x_i}\Big)^a + \Big(\frac{x_k'}{x_i}\Big)^a - \Big(\frac{x_k}{x_i}\Big)^a.\]
 By Proposition~\ref{prop:dominc}, $\lvert x_i \rvert > \lvert x_i' \rvert, \lvert x_k \rvert, \lvert x_k' \rvert$. Thus, the absolute value of the right hand side of the above equation must be
\[\leq 2 \frac{\lvert x_i' \rvert}{\lvert x_i \rvert} + \frac{\lvert x_k' \rvert}{\lvert x_i \rvert} + \frac{\lvert x_k \rvert}{\lvert x_i \rvert}.\]
We may however verify in PARI that this expression is always $< 2$, a contradiction.

Suppose then that $\lvert \Delta_i \rvert < \lvert \Delta_k \rvert$. In this case, we may, without loss of generality, assume that $x_k$ is dominant. Then
\[ 1 = 2 \Big(\frac{x_i'}{x_k}\Big)^a - 2 \Big(\frac{x_i}{x_k}\Big)^a + \Big(\frac{x_k'}{x_k}\Big)^a.\]
By Proposition~\ref{prop:dominc}, $\lvert x_k \rvert > \lvert x_i \rvert, \lvert x_i' \rvert, \lvert x_k' \rvert$. Thus, the absolute value of the right hand side of the above equation must be
\[\leq 2 \frac{\lvert x_i' \rvert}{\lvert x_k \rvert} + 2 \frac{\lvert x_i \rvert}{\lvert x_k \rvert} + \frac{\lvert x_k' \rvert}{\lvert x_k \rvert}.\]
We may however verify in PARI that this expression is always $< 1$, a contradiction. We have thus shown that the second case cannot occur.

Hence, we may assume from now on that $x_1, x_2, x_3$ are pairwise distinct. Suppose next that some $x_i \in \Q$. Then $x_j^a + x_k^a \in \Q$. By Theorem~\ref{thm:fieldsumpower}, if $\Delta_j \neq \Delta_k$, then
\[\Q = \Q( x_j^a + x_k^a) = \Q(x_j, x_k)\]
and so $x_j, x_k \in \Q$. If $\Delta_j = \Delta_k$, then $x_j, x_k$ are distinct, conjugate singular moduli and, again by Theorem~\ref{thm:fieldsumpower}, $[\Q(x_j, x_k) : \Q] \leq 2$; we thus must have that $x_j, x_k$ are distinct conjugate singular moduli of degree $2$. Thus, we are in case (3) of the theorem. So subsequently we assume also that no $x_i$ is rational. In particular, $h_i \geq 2$ for all $i$.

Without loss of generality, we may assume that $h_1 \geq h_2 \geq h_3$. Let $A \in \Q$ be such that
\[ x_1^a + x_2^a + x_3^a = A. \]
Observe that $\Q(x_1) = \Q(x_1^a) = \Q(x_2^a + x_3^a)$, where the first equality follows from Proposition~\ref{prop:fieldpower}. Thus,
by Theorem~\ref{thm:fieldsumpower}, we have that
\[[\Q(x_1) : \Q] = [\Q(x_2^a + x_3^a) : \Q]=  \begin{cases} \mbox{ either } [\Q(x_2, x_3) : \Q], \\
	\mbox{ or } \frac{1}{2}[\Q(x_2, x_3) : \Q].
\end{cases}\]
We now argue as in \cite{Fowler20}. Since $h_2 = [\Q(x_2) : \Q]$ and $h_3 = [\Q(x_3) : \Q]$ each divide $[\Q(x_2,x_3) : \Q]$, we obtain that $h_2, h_3 \mid 2 [\Q(x_1) : \Q]$. So $h_2, h_3 \mid 2 h_1$. Symmetrically, $h_1, h_2 \mid 2h_3$ and $h_1, h_3 \mid 2 h_2$.  By assumption, $h_1 \geq h_2 \geq h_3$. Therefore, one of the following holds:
\begin{enumerate}
	\item $h_1 = h_2 = h_3$,
	\item $h_1 = h_2 = 2h_3$,
	\item $h_1 = 2 h_2 = 2 h_3$.
\end{enumerate} 
We will consider each of these cases in turn.

 \subsection{The case where $h_1 = h_2 = h_3$}

Suppose first that $h_1 = h_2 = h_3$. Write $h$ for this class number. We make another case distinction.

\subsubsection{The subcase where $\Delta_1 = \Delta_2 = \Delta_3$}\label{subsubsec0}
Suppose that $\Delta_1 = \Delta_2 = \Delta_3$. Write $\Delta$ for this common discriminant. Note that we must have that $h \geq 3$, since the $x_i$ are pairwise distinct. If $h=3$, then this is case (4) of Theorem~\ref{thm:sum}. 

So assume that $h \geq 4$. Taking conjugates as necessary, we may assume that $x_1$ is dominant. So $x_2, x_3$ are not dominant. Thus, by \eqref{eq:ineq1},
\begin{align}\label{eq:ineq5}
\lvert A \rvert \geq (e^{\pi \lvert \Delta \rvert^{1/2}} -2079)^a - 2 (e^{\pi \lvert \Delta \rvert^{1/2}/2} + 2079)^a.
\end{align} 
Since $h \geq 4$, there is a conjugate $(x_1', x_2', x_3')$ of $(x_1, x_2, x_3)$ such that none of $x_1', x_2', x_3'$ is dominant. Therefore, by \eqref{eq:ineq1},
\begin{align}\label{eq:ineq6}
	\lvert A \rvert &= \lvert (x_1')^a + (x_2')^a + (x_3')^a \rvert \nonumber\\
	&\leq 3 (e^{\pi \lvert \Delta \rvert^{1/2}/2} + 2079)^a.
\end{align}
(In fact, a better bound holds, since at least one of $x_1', x_2', x_3'$ is not subdominant, because there are at most two subdominant singular moduli of discriminant $\Delta$. For our purposes, however, the given bound suffices.) 

In particular, the two bounds \eqref{eq:ineq5} and \eqref{eq:ineq6} are incompatible whenever
\[ (e^{\pi \lvert \Delta \rvert^{1/2}} -2079)^a > 5 (e^{\pi \lvert \Delta \rvert^{1/2}/2} + 2079)^a,\]
and so certainly whenever
\[ \frac{e^{\pi \lvert \Delta \rvert^{1/2}} -2079}{e^{\pi \lvert \Delta \rvert^{1/2}/2} + 2079} > 5,\]
which happens if $\lvert \Delta \rvert \geq 10$. Since $\lvert \Delta \rvert < 10$ would contradict the assumption that $h \geq 4$, we are done in this case.

\subsubsection{The subcase where the $\Delta_i$ are not all equal} \label{subsubsec1}
Next, suppose that the $\Delta_i$ are not all equal. Without loss of generality, we may assume that $\lvert \Delta_1 \rvert > \lvert \Delta_2 \rvert$ and $\lvert \Delta_1 \rvert \geq \lvert \Delta_3 \rvert \geq \lvert \Delta_2 \rvert$. Then, by Proposition~\ref{prop:fieldpower} and Theorem~\ref{thm:fieldsumpower},
\[ \Q(x_3) = \Q(x_3^a)= \Q(x_1^a + x_2^a) = \Q(x_1, x_2),\]
since $\Delta_1 \neq \Delta_2$. So
\[ \Q(x_1), \Q(x_2) \subset \Q(x_3).\]
Since $h_1 = h_2 = h_3$, we must then have that
\begin{align}\label{eq:1}
\Q(x_1) = \Q(x_2) = \Q(x_3).
\end{align}
If $\Q(\sqrt{\Delta_i}) \neq \Q(\sqrt{\Delta_j})$ for some $i \neq j$, then all possibilities for $(\Delta_1, \Delta_2, \Delta_3)$ are known by Lemma~\ref{lem:samefield}.

We now explain how we eliminate these possibilities. For now, assume that $h = 2$. Then, by \eqref{eq:1}, there is a unique non-trivial Galois conjugate $(x_1', x_2', x_3')$ of $(x_1, x_2, x_3)$ and this satisfies $x_i' \neq x_i$ for all $i$. One has that
\[(x_1')^a + (x_2')^a + (x_3')^a = A.\]
If $\Delta_1 = \Delta_3$, then $x_1, x_3$ are distinct, conjugate singular moduli of degree $2$, so one has that $x_1 = x_3'$ and $x_3 = x_1'$. Thus, in this case, one obtains that $(x_2)^a - (x_2')^a=0$, but this contradicts Theorem~\ref{thm:prod2}. Hence, $\Delta_1 \neq \Delta_3$. Similarly, one shows that $\Delta_2 \neq \Delta_3$. So we must have that $\lvert \Delta_1 \rvert > \lvert \Delta_3 \rvert > \lvert \Delta_2 \rvert$.

Observe that
\[ x_1^a = (x_1')^a -x_2^a + (x_2')^a - x_3^a + (x_3')^a,\]
and thus
\[ 1 = \Big(\frac{x_1'}{x_1}\Big)^a -\Big(\frac{x_2}{x_1}\Big)^a +\Big(\frac{x_2'}{x_1}\Big)^a -\Big(\frac{x_3}{x_1}\Big)^a +\Big(\frac{x_3'}{x_1}\Big)^a.\]
We may assume that $x_1$ is dominant. Thus, $\lvert x_1' \rvert, \lvert x_2 \rvert, \lvert x_2' \rvert, \lvert x_3 \rvert, \lvert x_3' \rvert < \lvert x_1 \rvert$ by Proposition~\ref{prop:dominc}, since $\lvert \Delta_1 \rvert > \lvert \Delta_2 \rvert, \lvert \Delta_3 \rvert$ and $x_1'$ is not dominant. Hence,
\begin{align*}
1 =&	\Big\lvert \Big(\frac{x_1'}{x_1}\Big)^a -\Big(\frac{x_2}{x_1}\Big)^a +(\frac{x_2'}{x_1})^a -(\frac{x_3}{x_1})^a +\Big(\frac{x_3'}{x_1}\Big)^a \Big\rvert\\
\leq &\Big\lvert \Big(\frac{x_1'}{x_1}\Big) \Big\rvert + \Big\lvert \Big(\frac{x_2}{x_1}\Big) \Big\rvert +\Big\lvert \Big(\frac{x_2'}{x_1}\Big) \Big\rvert +\Big\lvert \Big(\frac{x_3}{x_1}\Big) \Big\rvert +\Big\lvert \Big(\frac{x_3'}{x_1}\Big) \Big\rvert.
\end{align*}
However, we may verify in PARI that this last expression is $<1$ for every possible choice of $x_2, x_3$ for each of the relevant triples $(\Delta_1, \Delta_2, \Delta_3)$. We may thereby eliminate each of these possible $(\Delta_1, \Delta_2, \Delta_3)$.

We now assume that $h>2$. Then inspection of \cite[Table~2]{AllombertBiluMadariaga15} gives us that $h \geq 4$. Assume that $x_1$ is dominant. Since $h \geq 4$, there exists a conjugate $(x_1', x_2', x_3')$ of $(x_1, x_2, x_3)$ such that no $x_i'$ is dominant. Proceeding as above, we have that 
\[ 1 = \Big(\frac{x_1'}{x_1}\Big)^a -\Big(\frac{x_2}{x_1}\Big)^a +\Big(\frac{x_2'}{x_1}\Big)^a -\Big(\frac{x_3}{x_1}\Big)^a +\Big(\frac{x_3'}{x_1}\Big)^a.\]
By Proposition~\ref{prop:dominc}, $\lvert x_1' \rvert, \lvert x_2 \rvert, \lvert x_2' \rvert, \lvert x_3 \rvert, \lvert x_3' \rvert < \lvert x_1 \rvert$, since: $\lvert \Delta_1 \rvert > \lvert \Delta_2 \rvert$; $\lvert \Delta_1 \rvert \geq \lvert \Delta_3 \rvert$; no $x_i'$ is dominant; and if $\Delta_3 = \Delta_1$, then $x_3$ is not dominant (because $x_3 \neq x_1$). Consequently,
\begin{align*}
1=	&	\Big\lvert \Big(\frac{x_1'}{x_1}\Big)^a -\Big(\frac{x_2}{x_1}\Big)^a +\Big(\frac{x_2'}{x_1}\Big)^a -\Big(\frac{x_3}{x_1}\Big)^a +\Big(\frac{x_3'}{x_1}\Big)^a \rvert\\
	\leq &\Big\lvert \Big(\frac{x_1'}{x_1}\Big) \Big\rvert + \Big\lvert \Big(\frac{x_2}{x_1}\Big) \Big\rvert +\Big\lvert \Big(\frac{x_2'}{x_1}\Big) \Big\rvert +\Big\lvert \Big(\frac{x_3}{x_1}\Big) \Big\rvert +\Big\lvert \Big(\frac{x_3'}{x_1}\Big) \Big\rvert.
\end{align*}
Once again, we may verify in PARI that this last expression is $<1$ for every possible choice of $x_2, x_3$ for each of the relevant triples $(\Delta_1, \Delta_2, \Delta_3)$. We may thereby eliminate this case.

Now suppose instead that $\Q(\sqrt{\Delta_1}) = \Q(\sqrt{\Delta_2})= \Q(\sqrt{\Delta_3})$. Then Lemma~\ref{lem:samefield} implies that $\Delta_1 = 4 \Delta_2$ and $\Delta_3 \in \{\Delta_1, \Delta_2\}$. Write $\Delta = \Delta_2$.

Suppose first that $\Delta_1 = \Delta_3 = 4 \Delta$ and $\Delta_2 = \Delta$. Taking conjugates, we may assume that $x_1$ is dominant. So $x_3$ is not dominant. Then, by \eqref{eq:ineq1},
\begin{align}\label{eq:ineq7}
	\lvert A \rvert &\geq (e^{2 \pi \lvert \Delta \rvert^{1/2}} -2079)^a - (e^{\pi \lvert \Delta \rvert^{1/2}} + 2079)^a - (e^{2 \pi \lvert \Delta \rvert^{1/2}/2} + 2079)^a \nonumber\\
	& \geq (e^{2 \pi \lvert \Delta \rvert^{1/2}} -2079)^a - 2 (e^{\pi \lvert \Delta \rvert^{1/2}} + 2079)^a.
\end{align}
If $h = 2$, then $x_1, x_3$ are two distinct, conjugate singular moduli of degree $2$, so $x_1^a + x_3^a \in \Q$ and hence $x_2 \in \Q$ (since $\Q(x_2^a) = \Q(x_2)$ by Proposition~\ref{prop:fieldpower}). This cannot happen, since $h = 2$. So we must have that $h \geq 3$. Consequently, there is a conjugate $(x_1', x_2', x_3')$ with neither $x_1', x_3'$ dominant. Such a conjugate gives rise to the upper bound
\begin{align}\label{eq:ineq8}
	 \lvert A \rvert &\leq 2 (e^{2 \pi \lvert \Delta \rvert^{1/2}/2} +2079)^a + (e^{\pi \lvert \Delta \rvert^{1/2}} + 2079)^a \nonumber \\
	 &\leq 3 (e^{\pi \lvert \Delta \rvert^{1/2}} + 2079)^a.
	 \end{align}
In particular, the bounds \eqref{eq:ineq7} and \eqref{eq:ineq8} are incompatible whenever
\[ (e^{2 \pi \lvert \Delta \rvert^{1/2}} -2079)^a > 5 (e^{\pi \lvert \Delta \rvert^{1/2}} + 2079)^a,\]
and so certainly whenever
\[ \frac{e^{2 \pi \lvert \Delta \rvert^{1/2}} -2079}{e^{\pi \lvert \Delta \rvert^{1/2}} + 2079} > 5,\]
which happens if $\lvert \Delta \rvert \geq 3$, i.e. for all discriminants $\Delta$. So this case is proved.

Suppose next that $\Delta_1 = 4 \Delta$ and $\Delta_2 = \Delta_3 = \Delta$. Taking conjugates, we may assume that $x_1$ is dominant. At most one of $x_2, x_3$ is dominant, so
\begin{align*}
	 \lvert A \rvert &\geq (e^{2 \pi \lvert \Delta \rvert^{1/2}} -2079)^a - (e^{\pi \lvert \Delta \rvert^{1/2}} + 2079)^a - (e^{\pi \lvert \Delta \rvert^{1/2}/2} + 2079)^a \nonumber\\
	 &\geq (e^{2 \pi \lvert \Delta \rvert^{1/2}} -2079)^a - 2 (e^{\pi \lvert \Delta \rvert^{1/2}} + 2079)^a.
	 \end{align*}
Since $h \geq 2$, there is a conjugate $(x_1', x_2', x_3')$ with $x_1'$ not dominant. This conjugate gives rise to the upper bound
\begin{align*}
	 \lvert A \rvert &\leq (e^{2 \pi \lvert \Delta \rvert^{1/2}/3} +2079)^a + (e^{\pi \lvert \Delta \rvert^{1/2}} + 2079)^a + (e^{\pi \lvert \Delta \rvert^{1/2}/2} + 2079)^a\\
	 &\leq 3 (e^{\pi \lvert \Delta \rvert^{1/2}} + 2079)^a.
	 \end{align*}
We thus complete the proof of this case exactly as in the previous case.

\subsection{The case where $h_1 = h_2 = 2 h_3$}
Now we come to the case that $h_1 = h_2 = 2 h_3$. In particular, $\Delta_2 \neq \Delta_3$. Thus, by Theorem~\ref{thm:fieldsumpower},
\[ \Q(x_1) = \Q(x_1^a)= \Q(x_2^a + x_3^a) = \Q(x_2, x_3) \supset \Q(x_2), \Q(x_3).\]
And so
\[ \Q(x_1) = \Q(x_2) \supsetneq \Q(x_3)\]
with $[\Q(x_1) : \Q(x_3)]=2$.
If $\Delta_1 \neq \Delta_2$, then 
\[\Q(x_3) = \Q(x_3^{a})= \Q(x_1^a + x_2^a) = \Q(x_1, x_2) = \Q(x_1),\]
a contradiction. So $\Delta_1 = \Delta_2$. Observe also that $h_1 = h_2 \geq 4$ since $h_3 \geq 2$.

\subsubsection{The subcase where $\Q(\sqrt{\Delta_1})=\Q(\sqrt{\Delta_3})$} Suppose that $\Q(\sqrt{\Delta_1})=\Q(\sqrt{\Delta_3})$. Then Lemma~\ref{lem:subfieldsamefund} implies that $\Delta_1 \in \{9 \Delta_3/4, 4 \Delta_3, 9 \Delta_3, 16 \Delta_3\}$. 

First, suppose that $\Delta_1 = \Delta_2 = 9 \Delta/4$ and $\Delta_3 = \Delta$. We may assume that $x_1$ is dominant, and hence $x_2$ is not dominant. We obtain that
\begin{align}\label{eq:ineq9}
	 \lvert A \rvert &\geq (e^{3 \pi \lvert \Delta \rvert^{1/2}/2} -2079)^a - (e^{3 \pi \lvert \Delta \rvert^{1/2}/4} + 2079)^a - (e^{\pi \lvert \Delta \rvert^{1/2}} + 2079)^a \nonumber\\
	 &\geq (e^{3 \pi \lvert \Delta \rvert^{1/2}/2} -2079)^a - 2 (e^{\pi \lvert \Delta \rvert^{1/2}} + 2079)^a.
	 \end{align}
Since $h_1 \geq 4$, there exists a conjugate $(x_1', x_2', x_3')$ with $x_1', x_2'$ both not dominant. Thus,
\begin{align}\label{eq:ineq10}
	 \lvert A \rvert &\leq 2 (e^{3 \pi \lvert \Delta \rvert^{1/2}/4} +2079)^a + (e^{\pi \lvert \Delta \rvert^{1/2}} + 2079)^a \nonumber\\
	&\leq  3 (e^{\pi \lvert \Delta \rvert^{1/2}} + 2079)^a.
	 \end{align}
 In particular, the bounds \eqref{eq:ineq9} and \eqref{eq:ineq10} are incompatible whenever
 \[ (e^{3 \pi \lvert \Delta \rvert^{1/2} / 2} -2079)^a > 5 (e^{\pi \lvert \Delta \rvert^{1/2}} + 2079)^a,\]
 and so certainly whenever
 \[ \frac{e^{3 \pi \lvert \Delta \rvert^{1/2} /2} -2079}{e^{\pi \lvert \Delta \rvert^{1/2}} + 2079} > 5,\]
 which happens if $\lvert \Delta \rvert \geq 5$. Since $\lvert \Delta \rvert < 5$ would contradict the assumption that $h_3 \geq 2$, we are done in this case.

Second, suppose that $\Delta_1 = \Delta_2 = 4 \Delta$ and $\Delta_3 = \Delta$. Then, taking $x_1$ dominant,
\begin{align}\label{eq:ineq11}
	 \lvert A \rvert &\geq (e^{2 \pi \lvert \Delta \rvert^{1/2}} -2079)^a - 2 (e^{ \pi \lvert \Delta \rvert^{1/2}} + 2079)^a.
	 \end{align}
Since $h_1 \geq 4$, there exists a conjugate $(x_1', x_2', x_3')$ with $x_1', x_2'$ both not dominant. Thus,
\begin{align} \label{eq:ineq12}
\lvert A \rvert \leq 3 (e^{ \pi \lvert \Delta \rvert^{1/2}} +2079)^a.
\end{align}
The bounds \eqref{eq:ineq11} and \eqref{eq:ineq12} are incompatible whenever
\[ (e^{2 \pi \lvert \Delta \rvert^{1/2}} -2079)^a > 5 (e^{\pi \lvert \Delta \rvert^{1/2}} + 2079)^a,\]
which we have already seen is true for all discriminants $\Delta$.

Third, suppose that $\Delta_1 = \Delta_2 = 9 \Delta$ and $\Delta_3 = \Delta$. Then, taking $x_1$ dominant,
\begin{align}\label{eq:ineq13}
	 \lvert A \rvert &\geq (e^{3 \pi \lvert \Delta \rvert^{1/2}} -2079)^a - (e^{3 \pi \lvert \Delta \rvert^{1/2}/2} + 2079)^a - (e^{\pi \lvert \Delta \rvert^{1/2}} + 2079)^a \nonumber\\
	 &\geq (e^{3 \pi \lvert \Delta \rvert^{1/2}} -2079)^a - 2 (e^{3 \pi \lvert \Delta \rvert^{1/2}/2} + 2079)^a.
	 \end{align}
Since $h_1 \geq 4$, there exists a conjugate $(x_1', x_2', x_3')$ with $x_1', x_2'$ both not dominant. Thus,
\begin{align}\label{eq:ineq14}
	 \lvert A \rvert &\leq 2 (e^{3 \pi \lvert \Delta \rvert^{1/2}/2} +2079)^a + (e^{\pi \lvert \Delta \rvert^{1/2}} + 2079)^a \nonumber \\
	 &\leq 3(e^{3 \pi \lvert \Delta \rvert^{1/2}/2} +2079)^a .
	 \end{align}
The bounds \eqref{eq:ineq13} and \eqref{eq:ineq14} are incompatible whenever
\[ (e^{3 \pi \lvert \Delta \rvert^{1/2}} -2079)^a > 5 (e^{3 \pi \lvert \Delta \rvert^{1/2}/2} + 2079)^a,\]
and so certainly whenever
\[\frac{e^{3 \pi \lvert \Delta \rvert^{1/2}} -2079}{e^{3 \pi \lvert \Delta \rvert^{1/2}/2} + 2079} > 5,\]
which holds for all $\lvert \Delta \rvert \geq 2$, i.e. for all discriminants $\Delta$. 

Finally suppose that $\Delta_1 = \Delta_2 = 16 \Delta$ and $\Delta_3 = \Delta$. Then, taking $x_1$ dominant,
\begin{align}\label{eq:ineq15}
	\lvert A \rvert &\geq (e^{4 \pi \lvert \Delta \rvert^{1/2}} -2079)^a - (e^{2 \pi \lvert \Delta \rvert^{1/2}} + 2079)^a - (e^{\pi \lvert \Delta \rvert^{1/2}} + 2079)^a\nonumber \\
	&\geq (e^{4 \pi \lvert \Delta \rvert^{1/2}} -2079)^a - 2(e^{2 \pi \lvert \Delta \rvert^{1/2}} + 2079)^a.
	\end{align}
Since $h_1 \geq 4$, there exists a conjugate $(x_1', x_2', x_3')$ with $x_1', x_2'$ both not dominant. Thus,
\begin{align}\label{eq:ineq16}
	\lvert A \rvert &\leq 2 (e^{2 \pi \lvert \Delta \rvert^{1/2}} +2079)^a + (e^{\pi \lvert \Delta \rvert^{1/2}} + 2079)^a \nonumber\\
	&\leq 3 (e^{2 \pi \lvert \Delta \rvert^{1/2}} +2079)^a.
\end{align}
The bounds \eqref{eq:ineq15} and \eqref{eq:ineq16} are incompatible whenever
\[ (e^{4 \pi \lvert \Delta \rvert^{1/2}} -2079)^a > 5 (e^{2 \pi \lvert \Delta \rvert^{1/2}} + 2079)^a,\]
and so certainly whenever
\[\frac{e^{4 \pi \lvert \Delta \rvert^{1/2}} -2079}{e^{2 \pi \lvert \Delta \rvert^{1/2}} + 2079} > 5,\]
which holds for all $\lvert \Delta \rvert \geq 1$, i.e. for all discriminants $\Delta$.

\subsubsection{The subcase where $\Q(\sqrt{\Delta_1}) \neq \Q(\sqrt{\Delta_3})$}
Suppose that $\Q(\sqrt{\Delta_1}) \neq \Q(\sqrt{\Delta_3})$. Then Lemma~\ref{lem:subfielddiffund} implies that either one of $\Delta_1, \Delta_3$ is listed and the corresponding extension $\Q(x_i) / \Q$ is Galois, or $h_1 \geq 128$. 

We begin with the first case. In this case, we may find all possibilities for $(\Delta_1, \Delta_2, \Delta_3)$. When $\Delta_1$ is listed, PARI finds\footnote{The command which does this, delta1\textunderscore listed\textunderscore triples(), is contained in the script general.gp.} 330 possible triples $(\Delta_1, \Delta_2, \Delta_3)$. All but $9$ of these have $\lvert \Delta_1 \rvert > \lvert \Delta_3 \rvert$. So for now assume that $\lvert \Delta_1 \rvert > \lvert \Delta_3 \rvert$.

Our approach is basically the same as previously. We may assume that $x_1$ is dominant, and so $x_2$ is not dominant. Since $x_1^a + x_2^a + x_3^a = A$, for every conjugate $(x_1', x_2', x_3')$ of $(x_1, x_2, x_3)$, we have that
\[ (x_1')^a + (x_2')^a + (x_3')^a = A\]
as well. Thus,
\[ 1 = \Big(\frac{x_1'}{x_1}\Big)^a -\Big(\frac{x_2}{x_1}\Big)^a +\Big(\frac{x_2'}{x_1}\Big)^a -\Big(\frac{x_3}{x_1}\Big)^a +\Big(\frac{x_3'}{x_1}\Big)^a.\]
Since $h_1 \geq 4$, we may always choose $(x_1', x_2', x_3')$ such that neither of $x_1', x_2'$ is dominant. Therefore, $\lvert x_1' \rvert, \lvert x_2 \rvert, \lvert x_2' \rvert, \lvert x_3 \rvert, \lvert x_3' \rvert < \lvert x_1 \rvert$ by Proposition~\ref{prop:dominc}, since $\lvert \Delta_1 \rvert > \lvert \Delta_3 \rvert$ and $x_1', x_2, x_2'$ are not dominant. Hence,
\begin{align*}
1=	&	\Big\lvert \Big(\frac{x_1'}{x_1}\Big)^a -\Big(\frac{x_2}{x_1}\Big)^a +\Big(\frac{x_2'}{x_1}\Big)^a -\Big(\frac{x_3}{x_1}\Big)^a +\Big(\frac{x_3'}{x_1}\Big)^a \Big\rvert\\
	\leq &\Big\lvert \Big(\frac{x_1'}{x_1}\Big) \Big\rvert + \Big\lvert \Big(\frac{x_2}{x_1}\Big) \Big\rvert +\Big\lvert \Big(\frac{x_2'}{x_1}\Big) \Big\rvert +\Big\lvert \Big(\frac{x_3}{x_1}\Big) \Big\rvert +\Big\lvert \Big(\frac{x_3'}{x_1}\Big) \Big\rvert.
\end{align*}
However, we may verify in PARI that this last expression is $<1$ for every possible choice of $x_2, x_3$ for all but one of the $321$ triples $(\Delta_1, \Delta_2, \Delta_3)$, and so we may eliminate those triples.

The one exception is the triple $(\Delta_1, \Delta_2, \Delta_3) = (-240, -240, -235)$, which has $h(\Delta_1) = 4$. The problem here is that, since $-240, -235$ are so close to one another, one has that
\[ \frac{\lvert x_3 \rvert}{\lvert x_1 \rvert} \approx 0.6\]
if $x_1, x_3$ are the dominant singular moduli of respective discriminants $-240, -235$. So the approach described above does not work, because $x_3, x_3'$ could both be dominant. However, we can avoid this issue in the following way. 

Recall that $\Q(x_3) \subset\Q(x_1)$ with $[\Q(x_1) : \Q(x_3)]=2$ and the extension $\Q(x_1) /\Q$ is Galois. Further, $x_1$ is dominant by assumption, and so $x_2$ is not dominant. Suppose first that $x_3$ is also dominant. Since $h_1=4$, there are exactly $4$ conjugates of $(x_1, x_2, x_3)$, including $(x_1, x_2, x_3)$ itself. Observe that $x_3$ occurs exactly twice among the third coordinates of these $4$ conjugates, and by assumption one of these occurrences is in $(x_1, x_2, x_3)$ itself. Note that $2$ of the $3$ non-trivial conjugates of $(x_1, x_2, x_3)$ have neither their first nor their second coordinate dominant. At most $1$ of these $2$ conjugates has its third coordinate dominant. Thus, there exists a conjugate $(x_1', x_2', x_3')$ of $(x_1, x_2, x_3)$ such that none of $x_1', x_2', x_3'$ are dominant.

Now suppose that $x_3$ is not dominant. Exactly $2$ of the $3$ non-trivial conjugates of $(x_1, x_2, x_3)$ have neither their first nor their second coordinate dominant. Further, exactly $2$ of the $3$ non-trivial conjugates of $(x_1, x_2, x_3)$ have their third coordinate dominant. Thus, there exists a conjugate $(x_1', x_2', x_3')$ of $(x_1, x_2, x_3)$ such that $x_3'$ is dominant and neither of $x_1', x_2'$ is dominant.

In either case, we may always find a conjugate $(x_1', x_2', x_3')$ of $(x_1, x_2, x_3)$ such that $x_1', x_2'$ are not dominant and at most one of $x_3, x_3'$ is dominant. For such $(x_1, x_2, x_3)$ and $(x_1', x_2', x_3')$, we may verify in PARI that
\[\Big\lvert \Big(\frac{x_1'}{x_1}\Big) \Big\rvert + \Big\lvert \Big(\frac{x_2}{x_1}\Big) \Big\rvert +\Big\lvert \Big(\frac{x_2'}{x_1}\Big) \Big\rvert +\Big\lvert \Big(\frac{x_3}{x_1}\Big) \Big\rvert +\Big\lvert \Big(\frac{x_3'}{x_1}\Big) \rvert <1.\]
We may thus eliminate the triple $(-240, -240, -235)$.

We now rule out the $9$ triples $(\Delta_1, \Delta_2, \Delta_3)$ with $\lvert \Delta_1 \rvert < \lvert \Delta_3 \rvert$ analogously. We may assume that
\[ x_1^a + x_2^a +x_3^a = A\]
with $x_3$ dominant. We may find a conjugate $(x_1', x_2', x_3')$ of $(x_1, x_2, x_3)$ such that $x_3'$ is not dominant. We then verify in PARI that
\[\Big\lvert \Big(\frac{x_1}{x_3}\Big) \Big\rvert + \Big\lvert \Big(\frac{x_1'}{x_3}\Big) \Big\rvert +\Big\lvert \Big(\frac{x_2}{x_3}\Big) \Big\rvert +\Big\lvert \Big(\frac{x_2'}{x_3}\Big) \Big\rvert +\Big\lvert \Big(\frac{x_3'}{x_3}\Big) \Big\rvert <1.\]
This method allows us to eliminate these $9$ triples.

Now suppose that $\Delta_3$ was listed. Once again, we may find in PARI all the possible triples $(\Delta_1, \Delta_2, \Delta_3)$. We then eliminate each triple in the above way, taking either $x_1$ or $x_3$ to be dominant, according as to whether $\lvert \Delta_1 \rvert > \lvert \Delta_3 \rvert$ or $\lvert \Delta_1 \rvert < \lvert \Delta_3 \rvert$. The one triple in this case which we cannot immediately eliminate in this way is $(-240, -240, -235)$. However, we saw already how to eliminate this triple. The proof of this case is thus complete.

So we reduce to the second case, where $h_1 \geq 128$. Observe that either $\lvert \Delta_1 \rvert > \lvert \Delta_3 \rvert$ or $\lvert \Delta_1 \rvert < \lvert \Delta_3 \rvert$. Let $i \in \{1,3\}$ be such that $\Delta_i$ is the discriminant with strictly larger absolute value. Write $\Delta= \lvert \Delta_i \rvert$. Taking $x_i$ to be dominant, we obtain that
\begin{align}\label{eq:ineq17}
\lvert A \rvert \geq (e^{\pi  \Delta^{1/2}} -2079)^a - (e^{\pi  \Delta^{1/2}/2} + 2079)^a - (e^{\pi  (\Delta - 1)^{1/2}} + 2079)^a.
\end{align}
Since $h_1, h_2, h_3 \geq 64$, we may find a conjugate $(x_1', x_2', x_3')$ with none of $x_1', x_2', x_3'$ dominant, so that
\begin{align} \label{eq:ineq18}
\lvert A \rvert \leq 3 (e^{\pi  \Delta^{1/2}/2} +2079)^a.
\end{align}

The bounds \eqref{eq:ineq17} and \eqref{eq:ineq18} are incompatible when
\[ (e^{\pi  \Delta^{1/2}} -2079)^a - (e^{\pi  (\Delta - 1)^{1/2}} + 2079)^a > 4 (e^{\pi  \Delta^{1/2}/2} +2079)^a,\] 
and, in particular, whenever (without loss of generality, we take $\Delta \geq 6$, so that $e^{\pi \Delta^{1/2}}-2079>0$)
\[ 1 - \Big(\frac{e^{\pi  (\Delta - 1)^{1/2}} + 2079}{e^{\pi  \Delta^{1/2}} -2079}\Big)^a > 4 \Big(\frac{e^{\pi  \Delta^{1/2}/2} +2079}{e^{\pi  \Delta^{1/2}} -2079}\Big)^a.\]
For $\Delta \geq 9$, we have that
\[0 < \frac{e^{\pi  (\Delta - 1)^{1/2}} + 2079}{e^{\pi  \Delta^{1/2}} -2079}, \frac{e^{\pi  \Delta^{1/2}/2} +2079}{e^{\pi  \Delta^{1/2}} -2079} <1. \]
Therefore, for the given bounds to be incompatible, it will suffice to have $\Delta \geq 9$ such that
\[ 1 - \frac{e^{\pi  (\Delta - 1)^{1/2}} + 2079}{e^{\pi  \Delta^{1/2}} -2079}> 4 \frac{e^{\pi  \Delta^{1/2}/2} +2079}{e^{\pi  \Delta^{1/2}} -2079},\]
since $1-y^a \geq 1-y$ and $y^a \leq y$ for $0<y<1$ and $a \geq 1$.
Equivalently, to have $\Delta \geq 9$ such that
\[(e^{\pi  \Delta^{1/2}} -2079) - (e^{\pi  (\Delta - 1)^{1/2}} + 2079) > 4 (e^{\pi  \Delta^{1/2}/2} +2079),\]
which happens for all $\Delta \geq 12$. Since $\Delta < 12$ would imply that $x_i \in \Q$, we are done.

\subsection{The case where $h_1= 2 h_2= 2 h_3$}
Finally, we come to the case that $h_1 = 2 h_2 = 2 h_3$. In particular, $\Delta_1 \neq \Delta_2, \Delta_3$. Thus, by Theorem~\ref{thm:fieldsumpower},
\[ \Q(x_2) = \Q(x_2^a)= \Q(x_1^a + x_3^a) = \Q(x_1, x_3) \supset \Q(x_1).\]
This though contradicts the fact that $h_1 = 2 h_2 > h_2$. The proof of the $\epsilon_1=\epsilon_2=\epsilon_3$ case of Theorem~\ref{thm:sum} is thus complete.

\section{The general case of Theorem~\ref{thm:sum}}\label{sec:diff}

In this section, we prove the ``only if'' direction of Theorem~\ref{thm:sum} in general. Let $\epsilon_1, \epsilon_2, \epsilon_3 \in \{\pm 1\}$ and $a \in \Z_{>0}$. Suppose that $x_1, x_2, x_3$ are singular moduli such that
\[ \epsilon_1 x_1^a + \epsilon_2 x_2^a + \epsilon_3 x_3^a \in \Q.\]
Write $\Delta_i$ for the discriminant of $x_i$ and $h_i = h(\Delta_i)$ for the corresponding class number. We will show that we must be in one of cases (1)--(4) of Theorem~\ref{thm:sum}.

If $x_1 = x_2 = x_3$, then clearly $x_1, x_2, x_3 \in \Q$ by Proposition~\ref{prop:fieldpower}.

If $x_i = x_j \neq x_k$, then either $\epsilon_i = - \epsilon_j$ and so $x_k \in \Q$ by Proposition~\ref{prop:fieldpower}, or $\epsilon_i = \epsilon_j$ and so $2 \epsilon_i x_i^a + \epsilon_k x_k^a \in \Q$. The former possibility is case (2) of Theorem~\ref{thm:sum}. So suppose we are in the latter case. Then, by Theorem~\ref{thm:sum2}, either $x_i, x_k \in \Q$ (and we are in case (1) of Theorem~\ref{thm:sum}), or $\Q(x_i) = \Q(x_k)$ and this number field has degree $2$ over $\Q$. 

Suppose that we are in the second case. If $\epsilon_i = \epsilon_k$, then we showed how to eliminate this case at the beginning of Section~\ref{sec:sum}. So we may assume that $\epsilon_i \neq \epsilon_k$. Let $A \in \Q$ be such that
\[2 x_i^a - x_k^a = A.\]
Since $\Q(x_i)=\Q(x_k)$ has degree $2$, there exists a unique conjugate $(x_i', x_k')$ of $(x_i, x_k)$. One has that
\[2 (x_i')^a - (x_k')^a = A.\]

Suppose that $\Delta_i = \Delta_k$. Then $x_i' = x_k$ and $x_k' = x_i$. One thus obtains that
\[2x_i^a - x_k^a = 2 x_k^a -x_i^a,\]
and so $x_i^a = x_k^a$. This contradicts Theorem~\ref{thm:prod2}. So we may suppose that $\Delta_i \neq \Delta_k$. One then uses the equality
\[ 2 x_i^a - x_k^a = 2 (x_i')^a - (x_k')^a\]
to eliminate all the possibilities using PARI, in the same way as explained at the beginning of Section~\ref{sec:sum}.

So we may assume from now on that $x_1, x_2, x_3$ are pairwise distinct. Suppose next that some $x_i \in \Q$. Then $\epsilon_j x_j^a + \epsilon_k x_k^a \in \Q$. By Theorem~\ref{thm:fieldsumpower},
\[\Q = \Q(\epsilon_j x_j^a + \epsilon_k x_k^a) = \Q(x_j, x_k)\]
and so $x_j, x_k \in \Q$, unless $\epsilon_j = \epsilon_k$ and $\Delta_j = \Delta_k$. In the exceptional case, $[\Q(x_j, x_k) : \Q] \leq 2$, and so $x_j, x_k$ are distinct, conjugate singular moduli of degree $2$. Since $\epsilon_j = \epsilon_k$, this falls under case (3) of Theorem~\ref{thm:sum}. So we also assume subsequently that $x_1, x_2, x_3 \notin \Q$. 

If $\epsilon_1 = \epsilon_2 = \epsilon_3$, then we are done by the $\epsilon_1 = \epsilon_2 = \epsilon_3$ case of Theorem~\ref{thm:sum}, which was proved in Section~\ref{sec:sum}. So we may assume that the $\epsilon_i$ are not all equal. Clearly, it suffices to consider the case where $\epsilon_1 = \epsilon_2 = - \epsilon_3 = 1$. Let $A \in \Q$ be such that
\[ x_1^a + x_2^a - x_3^a = A.\]
By Theorem~\ref{thm:fieldsumpower}, we have that
\[ \Q(x_1) = \Q(x_1^a)= \Q(x_2^a - x_3^a) = \Q(x_2, x_3)\]
and 
\[\Q(x_2) = \Q(x_2^a)=\Q(x_1^a - x_3^a) = \Q(x_1, x_3).\]
So
\[ \Q(x_1, x_2)= \Q(x_1) = \Q(x_2) \supset \Q(x_3).\]
Further, $\Q(x_3) =\Q(x_3^a) = \Q(x_1^a + x_2^a)$ and $[\Q(x_1, x_2) : \Q(x_1^a + x_2^a)] \leq 2$.

\subsection{The case where $\Q(x_1) = \Q(x_2) = \Q(x_3)$}\label{subsec:diffeq}

Suppose first that $\Q(x_1) = \Q(x_2) = \Q(x_3)$. Write $h = h(\Delta_1) = h(\Delta_2) = h(\Delta_3)$. Without loss of generality, we may assume that $\lvert \Delta_1 \rvert \geq \lvert \Delta_2 \rvert$. If $\Q(\sqrt{\Delta_i}) \neq \Q(\sqrt{\Delta_j})$ for some $i \neq j$, then, by Lemma~\ref{lem:samefield}, all possible $(\Delta_1, \Delta_2, \Delta_3)$ are listed. 

We now explain how we eliminate these possibilities. To begin with, assume that $h = 2$. Then there is a unique non-trivial conjugate $(x_1', x_2', x_3')$ of $(x_1, x_2, x_3)$ and one has that
\[(x_1')^a + (x_2')^a - (x_3')^a = A.\]
If $\Delta_1 = \Delta_2$, then $x_1, x_2$ are distinct, conjugate singular moduli of degree $2$, so one has that $x_1 = x_2'$ and $x_2 = x_1'$. Thus, in this case, one obtains that $(x_3)^a - (x_3')^a=0$, but this contradicts Theorem~\ref{thm:prod2} since $x_3 \neq x_3'$. Hence, $\Delta_1 \neq \Delta_2$, and so $\lvert \Delta_1 \rvert > \lvert \Delta_2 \rvert$.

Suppose first that $\Delta_1 = \Delta_3$. Then $x_1' = x_3$ and $x_3' = x_1$. So
\[ 2 x_1^a = 2 x_3^a - x_2^a + (x_2')^a.\]
We may then use this equality, as at the beginning of Section~\ref{sec:sum}, to eliminate this choice of discriminants by taking $x_1$ to be dominant.

Suppose then that $\Delta_1 \neq \Delta_3$. Then 
\begin{align}\label{eq:2}
x_1^a = (x_1')^a - x_2^a + (x_2')^a + x_3^a - (x_3')^a
\end{align}
and
\begin{align}\label{eq:3}
x_3^a = x_1^a -(x_1')^a + x_2^a - (x_2')^a + (x_3')^a.
\end{align}
If $\lvert \Delta_1 \rvert > \lvert \Delta_3 \rvert$, then take $x_1$ dominant and use \eqref{eq:2}. If $\lvert \Delta_1 \rvert < \lvert \Delta_3 \rvert$, then take $x_3$ dominant and use \eqref{eq:3}. In each case, the approach is the same as that used in \S\ref{subsubsec1}.

So we may now assume that $h>2$. Then inspection of \cite[Table~2]{AllombertBiluMadariaga15} gives us that $h \geq 4$. Recall that, by assumption, $\lvert \Delta_1 \rvert \geq \lvert \Delta_2 \rvert$. 

Assume first that $\lvert \Delta_1 \rvert \geq \lvert \Delta_3 \rvert$ as well. Suppose that $x_1$ is dominant. Since $h \geq 4$, there exists a conjugate $(x_1', x_2', x_3')$ of $(x_1, x_2, x_3)$ such that no $x_i'$ is dominant. Proceeding as above, we have that 
\[ 1 = \Big(\frac{x_1'}{x_1}\Big)^a -\Big(\frac{x_2}{x_1}\Big)^a +\Big(\frac{x_2'}{x_1}\Big)^a +\Big(\frac{x_3}{x_1}\Big)^a -\Big(\frac{x_3'}{x_1}\Big)^a.\]
Further, $\lvert x_1' \rvert, \lvert x_2 \rvert, \lvert x_2' \rvert, \lvert x_3 \rvert, \lvert x_3' \rvert < \lvert x_1 \rvert$ by Proposition~\ref{prop:dominc}, since: $\lvert \Delta_1 \rvert \geq \lvert \Delta_2 \rvert, \lvert \Delta_3 \rvert$; no $x_i'$ is dominant; and, for $i \in \{2,3\}$, if $\Delta_i = \Delta_1$, then $x_i$ is not dominant since $x_i \neq x_1$. Consequently,
\begin{align*}
1=	&	\Big\lvert \Big(\frac{x_1'}{x_1}\Big)^a -\Big(\frac{x_2}{x_1}\Big)^a +\Big(\frac{x_2'}{x_1}\Big)^a +\Big(\frac{x_3}{x_1}\Big)^a -\Big(\frac{x_3'}{x_1}\Big)^a \Big\rvert\\
	\leq &\Big\lvert \Big(\frac{x_1'}{x_1}\Big) \Big\rvert + \Big\lvert \Big(\frac{x_2}{x_1}\Big) \Big\rvert +\Big\lvert \Big(\frac{x_2'}{x_1}\Big) \Big\rvert +\Big\lvert \Big(\frac{x_3}{x_1}\Big) \Big\rvert +\Big\lvert \Big(\frac{x_3'}{x_1}\Big) \Big\rvert.
\end{align*}
Once again, we may verify in PARI that this last expression is $<1$ for every possible choice of $x_2, x_3$ for each of the relevant triples $(\Delta_1, \Delta_2, \Delta_3)$. We may thereby eliminate this case.

Now suppose that $\lvert \Delta_3 \rvert > \lvert \Delta_1 \rvert$. Then we proceed in exactly the same way, but taking $x_3$ dominant instead. We may thus eliminate each of the relevant triples $(\Delta_1, \Delta_2, \Delta_3)$.

So suppose that $\Q(\sqrt{\Delta_1}) = \Q(\sqrt{\Delta_2}) = \Q(\sqrt{\Delta_3})$. Then, by Lemma~\ref{lem:samefield} again, $\Delta_i / \Delta_j \in \{1/4, 1, 4\}$ for all $i, j$. We thus have, since $\lvert \Delta_1 \rvert \geq \lvert \Delta_2 \rvert$ by assumption, one of:
\begin{enumerate}
	\item either $\Delta_1 = \Delta_2$ and one of:
	\begin{enumerate}
		\item $\Delta_3 = \Delta_1 / 4$,
		\item $\Delta_3 = \Delta_1$,
		\item $\Delta_3 = 4 \Delta_1$;
	\end{enumerate}
	\item or $\Delta_1 = 4 \Delta_2$ and one of:
	\begin{enumerate}
		\item $\Delta_3 = \Delta_1$,
		\item $\Delta_3 = \Delta_2$.
	\end{enumerate}
\end{enumerate}

Cases 1(a) and 2(a) reduce to the second case of \S\ref{subsubsec1} by means of the inequalities
\[ \lvert x_1^a + x_2^a - x_3^a \rvert \geq \lvert x_1 \rvert^a - \lvert x_2 \rvert^a - \lvert x_3 \rvert^a\]
and
\[\lvert x_1^a + x_2^a - x_3^a \rvert \leq \lvert x_1 \rvert^a + \lvert x_2 \rvert^a + \lvert x_3 \rvert^a.\]

In case 1(b), we must have that $h \geq 3$ since $x_1, x_2, x_3$ are three distinct Galois conjugates. If $h=3$, then $x_1, x_2, x_3$ are the three roots of the Hilbert class polynomial $H_{\Delta_1} \in \Z[x]$. Hence,
\[ x_1^a + x_2^a + x_3^a \in \Q.\]
Since $x_1^a + x_2^a - x_3^a \in \Q$ also, we obtain that
\[ x_3^a = \frac{(x_1^a + x_2^a + x_3^a)-(x_1^a+x_2^a-x_3^a)}{2} \in \Q.\]
But then Proposition~\ref{prop:fieldpower} implies that $x_3 \in \Q$, a contradiction. So we may assume that $h \geq 4$. We may now reduce case 1(b) to the case of \S\ref{subsubsec0} by using the inequalities from the previous paragraph.

Case 1(c) is new. Here, we may take $x_3$ to be dominant and use the lower bound coming from
\[ \lvert x_1^a + x_2^a - x_3^a \rvert \geq \lvert x_3 \rvert^a - \lvert x_1 \rvert^a - \lvert x_2 \rvert^a.\]
The resulting lower bound is incompatible 
for all $\lvert \Delta_1 \rvert \geq 3$ (i.e. for all $\Delta_1$) with the upper bound obtained from
\[\lvert (x_1')^a + (x_2')^a - (x_3')^a \rvert \leq \lvert x_1' \rvert^a + \lvert x_2' \rvert^a + \lvert x_3' \rvert^a,\]
where $(x_1', x_2', x_3')$ is a conjugate of $(x_1, x_2, x_3)$ with $x_3'$ not dominant. Such a conjugate exists since $h \geq 2$.

We treat case 2(b) in directly analogous fashion to case 1(c), swapping the roles of $x_1$ and $x_3$. We thus complete the proof in this case.

\subsection{The case where $\Q(x_1) = \Q(x_2) \supsetneq \Q(x_3)$}\label{subsec:diffdiff}

So now suppose that $\Q(x_1) = \Q(x_2) \supsetneq \Q(x_3)$. Then $[\Q(x_1, x_2) : \Q(x_3)]=2$. If $\Delta_1 \neq \Delta_2$, then 
\[\Q(x_3) = \Q(x_3^a) = \Q(x_1^a + x_2^a) = \Q(x_1, x_2) = \Q(x_1),\]
a contradiction. So $\Delta_1 = \Delta_2$. Observe also that $h_1 = h_2 \geq 4$ since $h_3 \geq 2$ and $h_1 = h_2 = 2 h_3$. Suppose first that $\Q(\sqrt{\Delta_1})=\Q(\sqrt{\Delta_3})$. Then Lemma~\ref{lem:subfieldsamefund} implies that $\Delta_1 \in \{9 \Delta_3/4, 4 \Delta_3, 9 \Delta_3, 16 \Delta_3\}$. These cases may be dealt with in exactly the same way as the corresponding cases in Section~\ref{sec:sum}.

So suppose that $\Q(\sqrt{\Delta_1}) \neq \Q(\sqrt{\Delta_3})$. Then Lemma~\ref{lem:subfielddiffund} implies that either one of $\Delta_1, \Delta_3$ is listed, or $h_1 \geq 128$. In the first case, we may find all possibilities for $(\Delta_1, \Delta_2, \Delta_3)$. Each of these may be eliminated in exactly the same way as was done in Section~\ref{sec:sum}. So we reduce to the second case, where $h_1 \geq 128$. This case may also be handled exactly as was done in Section~\ref{sec:sum}. The proof of Theorem~\ref{thm:sum} is thus complete.

\section{Fermat's last theorem for singular moduli}\label{sec:fermat}

In this section, we prove that there are no triples $(x_1, x_2, x_3)$ with $x_1, x_2, x_3$ all non-zero singular moduli which satisfy either
\[ x_1^a + x_2^a + x_3^a = 0\]
or
\[ x_1^a + x_2^a - x_3^a = 0,\]
where $a \in \Z_{>0}$. This proves a ``Fermat's last theorem'' for singular moduli and gives a completely explicit form of Andr\'e--Oort for the two corresponding algebraic surfaces. For fixed $a$, we note that effective bounds on the discriminants of singular moduli $x_1, x_2, x_3$ satisfying either of the above equations follow from Binyamini's effective Andr\'e--Oort result for ``hdnd hypersurfaces'' \cite[Corollary~4]{Binyamini19}.

\begin{cor}\label{cor:diff}
	Let $a \in \Z_{>0}$. Suppose that $x_1, x_2, x_3$ are singular moduli such that $x_1^a + x_2^a - x_3^a = 0$. Then $\{0, x_3\}=\{x_1, x_2\}$.
\end{cor}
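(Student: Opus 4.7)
The plan is to combine Corollary~\ref{cor:sum}, proved earlier in this section, with the product classification Theorem~\ref{thm:prod2} of Riffaut. Applying Corollary~\ref{cor:sum} to the relation $x_1^a + x_2^a = x_3^a$ gives immediately that $x_1 x_2 x_3 = 0$, so at least one coordinate vanishes. I would then split into two cases according to which coordinate is zero.

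If $x_3 = 0$, the relation becomes $x_1^a = -x_2^a$. If additionally $x_1 = 0$ or $x_2 = 0$, then both vanish and $\{x_1, x_2\} = \{0\} = \{0, x_3\}$. Otherwise, $x_1$ and $x_2$ are both nonzero singular moduli with $x_1^a x_2^{-a} = -1 \in \Q^\times$, so Theorem~\ref{thm:prod2} applies with the exponents $m = a$ and $n = -a$. Its case (2) would force $x_1 = x_2$ and hence $2 x_1^a = 0$, a contradiction; its case (3) requires $m = n$, that is $a = -a$, impossible for $a > 0$. Thus $x_1, x_2 \in \Q$ by case (1), and $(x_1/x_2)^a = -1$ with $x_1/x_2 \in \Q$ forces $x_1/x_2 = -1$ (in particular $a$ odd). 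But the thirteen rational singular moduli have pairwise distinct absolute values, so no nonzero rational singular modulus has its negative in the list; this subcase is therefore empty.

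If instead $x_1 = 0$ (the subcase $x_2 = 0$ being symmetric), the relation reduces to $x_2^a = x_3^a$. If $x_2 = 0$ or $x_3 = 0$ the conclusion again holds trivially. Otherwise, Theorem~\ref{thm:prod2} applied to $x_2^a x_3^{-a} = 1 \in \Q^\times$ rules out case (3) by $a \neq -a$, case (2) gives $x_2 = x_3$ and hence $\{x_1, x_2\} = \{0, x_3\}$, while case (1) gives $x_2, x_3 \in \Q$, whence $(x_2/x_3)^a = 1$ and $x_2/x_3 \in \{\pm 1\}$; the distinct-absolute-values inspection again excludes $-1$, so $x_2 = x_3$.

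No step presents a substantive obstacle: the argument is essentially a clean reduction to Corollary~\ref{cor:sum} and Theorem~\ref{thm:prod2}, together with the elementary observation (immediate from the explicit PARI list of the thirteen rational singular moduli) that no two distinct nonzero rational singular moduli are negatives of one another.
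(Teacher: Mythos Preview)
Your argument is circular. You invoke Corollary~\ref{cor:sum} as though it were already established, but in this paper Corollary~\ref{cor:sum} is \emph{derived from} Corollary~\ref{cor:diff} (together with Corollary~\ref{cor:vanish}); see the line immediately following the proof of Corollary~\ref{cor:vanish}. In particular, the half of Corollary~\ref{cor:sum} that you need---that $x_1^a + x_2^a = x_3^a$ forces $x_1 x_2 x_3 = 0$---is precisely a weakening of Corollary~\ref{cor:diff}, and no independent proof of it has been given at this point.

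The paper's proof instead appeals directly to Theorem~\ref{thm:sum}. Assuming the $x_i$ are pairwise distinct and nonzero, the relation $x_1^a + x_2^a - x_3^a = 0 \in \Q$ falls under Theorem~\ref{thm:sum}, and since the signs are not all equal, only cases (1) or (3) can occur. Case (1) (all $x_i \in \Q$, hence $\in \Z$) is handled by Fermat's Last Theorem for $a \geq 3$ and a finite PARI check for $a \leq 2$; case (3) ($x_3 \in \Q$ and $x_1, x_2$ degree~$2$ conjugates) is eliminated by the size inequalities $0.9\lvert x_1 \rvert \leq \lvert x_3 \rvert \leq \lvert x_1 \rvert + \lvert x_2 \rvert$, which a PARI check shows are never simultaneously satisfied. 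The non-pairwise-distinct cases and the case where some $x_i = 0$ are then dealt with separately, much as you do, via Theorem~\ref{thm:prod2} and inspection of the rational singular moduli. Your reduction steps after the appeal to Corollary~\ref{cor:sum} are fine; the problem is solely that the starting point is not available.
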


\begin{proof}
	Suppose that $x_1, x_2, x_3$ are singular moduli such that $x_1^a + x_2^a - x_3^a = 0$. If $x_3 \in \{x_1, x_2\}$, then clearly $\{x_1, x_2\} = \{0, x_3\}$. Suppose next that $x_1 = x_2 \neq x_3$. Then $2 x_1^a = x_3^a$. So $x_1, x_3 \neq 0$ and
	\[ \frac{x_1^a}{x_3^a} = \frac{1}{2}.\]
	We thus must have that $x_1, x_3 \in \Q$ by Theorem~\ref{thm:prod2}. An inspection of the list of rational singular moduli shows that this is impossible.
	
	We may thus assume that $x_1, x_2, x_3$ are pairwise distinct. We will show that this impossible. Suppose first that some $x_i = 0$. Then $x_j, x_k \neq 0$ and $x_j^a + \epsilon x_k^a =0$ for some $\epsilon \in \{\pm 1\}$. Hence one has that
	\[ \Big(\frac{x_j}{x_k}\Big)^{2a} = 1,\]
	and so, by Theorem~\ref{thm:prod2}, one has that $x_j, x_k \in \Q$. So $x_j/ x_k$ is a rational root of unity (i.e. $\pm 1$), so $x_j = - x_k$ since $x_j \neq x_k$. Inspecting the list of rational singular moduli, we see that this is impossible.
	
	Hence, we may assume that $x_1, x_2, x_3 \neq 0$ also. We will show that this leads to a contradiction. By Theorem~\ref{thm:sum}, we have that either $x_1, x_2, x_3 \in \Q$ or $x_3 \in \Q$ and $x_1, x_2$ are degree $2$ and conjugate. Suppose first that $x_1, x_2, x_3 \in \Q$ (and hence $\in \Z$). Then Fermat's last theorem (Wiles \cite{Wiles95} and Taylor--Wiles \cite{TaylorWiles95}) implies that $a \leq 2$. For $a \leq 2$, we may verify in PARI that the statement holds.
	
	Suppose then that $x_3 \in \Q$ and $x_1, x_2$ are degree $2$ and conjugate. Without loss of generality, we may assume that $x_1$ is dominant, and so $x_2$ is not dominant. So $\lvert x_2 \rvert \leq 0.1 \lvert x_1 \rvert$ by Proposition~\ref{prop:dom}. We thus have that
	\begin{align*} 
		\lvert x_3 \rvert^a &\geq \lvert x_1 \rvert^a - \lvert x_2 \rvert^a\\
		&\geq \lvert x_1 \rvert^a - (0.1 \lvert x_1 \rvert)^a\\
		&= (1- 0.1^a) \lvert x_1 \rvert^a\\
		&\geq (0.9 \lvert x_1 \rvert)^a.
	\end{align*}
Hence, $\lvert x_3 \rvert \geq  0.9 \lvert x_1 \rvert$. Clearly,
\[ \lvert x_3 \rvert^a \leq \lvert x_1 \rvert^a + \lvert x_2 \rvert^a \leq (\lvert x_1 \rvert + \lvert x_2 \rvert)^a,\]
and so $\lvert x_3 \rvert \leq \lvert x_1 \rvert + \lvert x_2 \rvert$. We may verify in PARI that these two inequalities are never both satisfied.	
	\end{proof}

\begin{remark}
	In particular, for every $a \in \Z_{>0}$, the sum (respectively, difference) of the $a$th powers of two non-zero (respectively, non-zero and distinct) singular moduli is never equal to the $a$th power of a singular modulus. Differences of singular moduli are themselves objects of considerable interest, see e.g. \cite{GrossZagier85}. 
\end{remark}

\begin{cor}\label{cor:vanish}
	Let $n \leq 3$ and let $x_1, \ldots, x_n$ be singular moduli (which are not necessarily pairwise distinct). Suppose that $x_1^a + \dots + x_n^a = 0$ for some $a \in \Z_{>0}$. Then $x_i=0$ for all $i \in \{1, \ldots, n\}$.
\end{cor}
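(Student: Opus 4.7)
The plan is to handle the three possible values of $n$ separately, reducing the hardest case ($n=3$) to the case $n=2$ via Corollary~\ref{cor:sum}, and reducing $n=2$ to the classification of rational singular moduli via Theorem~\ref{thm:prod2}.

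For $n=1$, the claim is immediate. For $n=2$, suppose $x_1^a + x_2^a = 0$ with $x_1, x_2$ singular moduli. If either $x_i = 0$ then the other vanishes as well, so assume both are nonzero. If $x_1 = x_2$, then $2 x_1^a = 0$, forcing $x_1 = 0$, a contradiction. So $x_1$ and $x_2$ are distinct nonzero singular moduli with $x_1^a \cdot x_2^{-a} = -1 \in \Q^{\times}$. Applying Theorem~\ref{thm:prod2} with $(m, n) = (a, -a)$, the only surviving option is $x_1, x_2 \in \Q$ (the case $x_1 = x_2$ is excluded, and $m = n$ would force $a = 0$). Then $x_1/x_2$ is a rational root of unity, hence equal to $\pm 1$, so $x_1 = -x_2$. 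A direct inspection of the $13$ rational singular moduli shows that no two distinct nonzero ones are negatives of each other, giving a contradiction. Hence the $n=2$ case holds.

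For $n=3$, suppose $x_1^a + x_2^a + x_3^a = 0$. By Corollary~\ref{cor:sum}, we have $x_1 x_2 x_3 = 0$, so at least one of the $x_i$ vanishes; without loss of generality $x_3 = 0$. Then $x_1^a + x_2^a = 0$, and the $n=2$ case just established gives $x_1 = x_2 = 0$.

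The only real work is the $n=2$ case, since $n=1$ is trivial and $n=3$ is a direct application of Corollary~\ref{cor:sum} followed by $n=2$. The main step in $n=2$ is the invocation of Theorem~\ref{thm:prod2}, after which one only needs to exclude a finite, explicit list of rational candidates.
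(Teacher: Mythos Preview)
Your argument for $n\le 2$ is fine and matches the paper. The problem is the $n=3$ step: you invoke Corollary~\ref{cor:sum}, but in this paper Corollary~\ref{cor:sum} is \emph{deduced from} Corollary~\ref{cor:vanish} (together with Corollary~\ref{cor:diff}); see the final line of Section~\ref{sec:fermat}. So as written your proof is circular.

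The paper instead handles $n=3$ directly from Theorem~\ref{thm:sum}. After disposing of the non-pairwise-distinct and some-$x_i=0$ subcases (the latter reducing to $n=2$ exactly as you do), one is left with pairwise distinct nonzero $x_1,x_2,x_3$ satisfying $x_1^a+x_2^a+x_3^a=0\in\Q$. Theorem~\ref{thm:sum} then forces one of: (i) all three rational, eliminated via Fermat's Last Theorem for $a\ge 3$ and a short finite check for $a\le 2$; (ii) one rational and the other two degree-$2$ conjugates, eliminated by the size inequalities $0.9\lvert x_1\rvert\le\lvert x_3\rvert\le\lvert x_1\rvert+\lvert x_2\rvert$ as in Corollary~\ref{cor:diff}; (iii) all three degree-$3$ conjugates, eliminated by taking $x_1$ dominant and using Proposition~\ref{prop:dom}. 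Replacing your appeal to Corollary~\ref{cor:sum} with this argument (or, equivalently, proving the ``$x_1^a+x_2^a+x_3^a=0$'' half of Corollary~\ref{cor:sum} independently of Corollary~\ref{cor:vanish}) would repair your proof.
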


\begin{proof}
	The case $n=1$ is trivial. Suppose $n=2$. If $x_1 = x_2$, then the result is immediate. So suppose $x_1 \neq x_2$ and $x_1^a + x_2^a =0$. The same argument as in the case where some $x_i=0$ in the proof of Corollary~\ref{cor:diff} shows that this is impossible.
	
	Now for the $n=3$ case. Suppose that $x_1, x_2, x_3$ are singular moduli such that $x_1^a + x_2^a + x_3^a = 0$ for some $a \geq 1$. If $x_1 = x_2 = x_3$, then clearly $x_1=x_2=x_3=0$. Suppose that $x_1=x_2 \neq x_3$. Then $2 x_1^a = -x_3^a$. So $x_1, x_3 \neq 0$. Then Theorem~\ref{thm:prod2} implies that $x_1, x_3 \in \Q$. An inspection of the list of rational singular moduli shows that this is impossible.
	
	So we may assume that $x_1, x_2, x_3$ are pairwise distinct. Suppose $x_3=0$ say. Then $x_1^a + x_2^a = 0$. By the same argument as before, this is impossible. So we may assume also that $x_1, x_2, x_3 \neq 0$. Theorem~\ref{thm:sum} implies that one of the following holds:
	\begin{enumerate}
		\item $x_1, x_2, x_3 \in \Q$,
		\item some $x_i, x_j$ are conjugate and of degree $2$ and the remaining $x_k \in \Q$,
		\item $x_1, x_2, x_3$ are conjugate and of degree $3$.
	\end{enumerate}

Suppose first that $x_1, x_2, x_3 \in \Q$. Then Fermat's last theorem \cite{Wiles95, TaylorWiles95} implies that $a \leq 2$. If $a =2$, then $- x_3^2 = x_1^2 + x_2^2 >0$, which is impossible. So we must have that $a=1$. But $a=1$ may be eliminated by a computation.

Suppose next that $x_1, x_2$ are conjugate and of degree $2$ and $x_3 \in \Q$. This case may be dealt with as in Corollary~\ref{cor:diff}. We assume that $x_1$ is dominant and use the inequalities
\[0.9 \lvert x_1 \rvert \leq \lvert x_3 \rvert \leq \lvert x_1 \rvert + \lvert x_2 \rvert. \]

Suppose finally that $x_1, x_2, x_3$ are conjugate and of degree $3$. We may assume that $x_1$ is dominant, so $x_2, x_3$ are not dominant. Then
\begin{align*} 
	0=\lvert x_1^a + x_2^a + x_3^a \rvert &\geq \lvert x_1 \rvert^a - \lvert x_2 \rvert^a - \lvert x_3 \rvert^a\\
	&\geq \lvert x_1 \rvert^a -  (0.1 \lvert x_1 \rvert)^a - (0.1 \lvert x_1 \rvert)^a\\
	&\geq (1- 2 \times 0.1^a) \lvert x_1 \rvert^a\\
	&>0,
	\end{align*}
which is a contradiction. The proof is thus complete.
	\end{proof}

Corollary~\ref{cor:sum} follows immediately from Corollaries~\ref{cor:diff} and \ref{cor:vanish}.

\section{Powers of a product of three singular moduli}\label{sec:prod}

Now we come to the proof of Theorem~\ref{thm:prod}. Again the ``if'' direction is immediate, so we just need to prove the ``only if''. Suppose that $x_1, x_2, x_3$ are singular moduli of respective discriminants $\Delta_1, \Delta_2, \Delta_3$ such that
\[(x_1 x_2 x_3)^a = A\]
for some $a \in \Z \setminus \{0\}$ and $A \in \Q^{\times}$. Write $h_i$ for the respective class numbers $h(\Delta_i)$. Clearly, we may assume that $a \geq 1$. We will show that one of cases (1)--(3) of Theorem~\ref{thm:prod} must hold.

\subsection{The trivial cases}\label{subsec:trivial}

If $x_1 = x_2 = x_3$, then we must have that $x_1, x_2, x_3 \in \Q^{\times}$ by Proposition~\ref{prop:fieldpower}. If the set $\{x_1, x_2, x_3\}$ has cardinality $2$, then we are done by Theorem~\ref{thm:prod2}. So we may and do assume that $x_1, x_2, x_3$ are pairwise distinct. 

If some $x_i \in \Q$, then again the result follows from Theorem~\ref{thm:prod2}. So we assume from now on that $x_1, x_2, x_3 \notin \Q$. In particular, $h_1, h_2, h_3 \geq 2$. If $\Delta_1 = \Delta_2 = \Delta_3$, then we must have that $h_1 = h_2 = h_3 \geq 3$, since $x_1, x_2, x_3$ are all distinct. If $\Delta_1 = \Delta_2 = \Delta_3$ and $h_1 = h_2 = h_3 = 3$, then we are in case (3) of Theorem~\ref{thm:prod}. So from now on, assume that we do not have both $\Delta_1 = \Delta_2 = \Delta_3$ and $h_1 = h_2 = h_3 = 3$.

\subsection{Bounding the non-trivial cases effectively}\label{subsec:bdnontriv}
We are not then in any of cases (1)--(3) of Theorem~\ref{thm:prod}. We will show that this cannot happen. Our first step is to reduce all the possibilities for $(\Delta_1, \Delta_2, \Delta_3)$ to a finite list. This we do in this subsection. In the next subsection, we will explain how we can eliminate each of the cases in this list in PARI.

Without loss of generality, we may assume that $h_1 \geq h_2 \geq h_3 \geq 2$. By Proposition~\ref{prop:fieldpower}, we have that 
\[ \Q(x_1) = \Q(x_1^{-a}) = \Q(x_2^a x_3^a).\]
Hence, by Theorem~\ref{thm:fieldprodpower}, either $\Q(x_1) = \Q(x_2, x_3)$ or $[\Q(x_2, x_3) : \Q(x_1)] = 2$. Obviously, we obtain the same swapping the roles of $x_1, x_2, x_3$. Since $[\Q(x_i) : \Q] = h_i$, we may thus conclude that one of the following holds:
\begin{enumerate}
	\item $h_1 = h_2 = h_3$,
	\item $h_1 = h_2 = 2 h_3$,
	\item $h_1 = 2 h_2 = 2 h_3$.
\end{enumerate} 

We may now proceed to bound $\lvert \Delta_1 \rvert, \lvert \Delta_2 \rvert, \lvert \Delta_3 \rvert$ exactly as in \cite{Fowler20}. In particular, we obtain the same bounds as there thanks to the elementary fact that, for $s, t\geq 0$, one has that $s^a > t^a$ if and only if $s > t$. We thus reduce to the same possibilities for $(\Delta_1, \Delta_2, \Delta_3)$ as in \cite{Fowler20}, namely those recorded in the list at the beginning of \S4 of \cite{Fowler20}. 

\subsection{Eliminating the non-trivial cases}\label{subsec:elimnontriv}

For a triple $(\Delta_1, \Delta_2, \Delta_3)$ belonging to this list, we may eliminate it using a PARI script in the following way. 

Suppose that $x_1, x_2, x_3$ are pairwise distinct, non-zero singular moduli of discriminants $\Delta_1, \Delta_2, \Delta_3$ such that $(x_1 x_2 x_3)^a = A$ for some $a \in \Z_{>0}$ and $A \in \Q^\times$. Let $L$ be a Galois extension of $\Q$ containing $x_1, x_2, x_3$. Then, for every $\sigma \in \Gal(L / \Q)$, we have that
\[(\sigma(x_1) \sigma(x_2) \sigma(x_3))^a = A,\]
and thus 
\[ \frac{x_1 x_2 x_3}{\sigma(x_1) \sigma(x_2) \sigma(x_3)}\]
is a root of unity in $L$.

Therefore, if we can find an automorphism $\sigma \in \Gal(L / \Q)$ such that 
\[ \frac{x_1 x_2 x_3}{\sigma(x_1) \sigma(x_2) \sigma(x_3)}\]
is not among the roots of unity in $L$ for each possible choice of $x_1, x_2, x_3$, then we may eliminate the triple $(\Delta_1, \Delta_2, \Delta_3)$. Using PARI, we eliminate in this way all the triples $(\Delta_1, \Delta_2, \Delta_3)$ belonging to the list from Subsection~\ref{subsec:elimnontriv}, and thereby complete the proof of Theorem~\ref{thm:prod}.

\begin{remark}\label{rmk:multfmt}
	The obvious multiplicative analogue of the results in Section~\ref{sec:fermat} would concern solutions in singular moduli to the equation $(x_1 x_2 x_3)^a =1$, where $a \in \Z \setminus \{0\}$. One may easily show, using Theorems~\ref{thm:prod} and \ref{thm:prod2} together with some easy computations, that there are no such solutions. However, the non-existence of such solutions is also an immediate consequence of Bilu, Habegger, and K\"uhne's result \cite{BiluHabeggerKuhne18} that no singular modulus is an algebraic unit. Indeed, the result of \cite{BiluHabeggerKuhne18} implies that there are no solutions in singular moduli to the equation $(\prod_{i=1}^n x_i)^a = 1$ for any $n \in \Z_{>0}$ and $a \in \Z \setminus \{0\}$.
\end{remark}

\section{The proof of Theorem~\ref{thm:quot}}\label{sec:quot}

Now suppose that $x_1, x_2, x_3$ are singular moduli of respective discriminants $\Delta_1, \Delta_2, \Delta_3$ such that
\[(x_1^{\epsilon_1} x_2^{\epsilon_2} x_3^{\epsilon_3})^a = A\] 
for some $a \in \Z \setminus \{0\}$, $A \in \Q^\times$, and $\epsilon_i \in \{\pm 1\}$. As usual, write $h_i = h(\Delta_i)$. Clearly, we may assume that $a \geq 1$. If $\epsilon_1 = \epsilon_2 = \epsilon_3$, then the desired result is just Theorem~\ref{thm:prod}. So we may assume that the $\epsilon_i$ are not all equal.

If the $x_i$ are not pairwise distinct, then the result follows easily from Proposition~\ref{prop:fieldpower} and Theorem~\ref{thm:prod2}. So we may suppose that $x_1, x_2, x_3$ are pairwise distinct. In addition, if some $x_i \in \Q$, then the result also follows immediately from Theorem~\ref{thm:prod2}. So we assume also that no $x_i$ is rational. In particular, $h_1, h_2, h_3 \geq 2$.  We are thus not in any of the trivial cases (1)--(4) of Theorem~\ref{thm:quot}. We will show this leads to a contradiction, apart from in the one exceptional case (5).

Without loss of generality, we may assume that $\epsilon_1 = \epsilon_2 = - \epsilon_3 = 1$, so that
\[\Big(\frac{x_1 x_2}{x_3}\Big)^a = A.\]
By Proposition~\ref{prop:fieldpower} and Theorem~\ref{thm:fieldprodpower}, we have that
\[ \Q(x_1) = \Q(x_1^a) = \Q\Big(\frac{x_2^a}{x_3^a}\Big) = \Q(x_2, x_3)\]
and 
\[ \Q(x_2) =  \Q(x_2^a) = \Q\Big(\frac{x_1^a}{x_3^a}\Big) = \Q(x_1, x_3).\]
Thus, $\Q(x_1) = \Q(x_2) \supset \Q(x_3)$.
Also, the field $\Q(x_3) =\Q(x_3^a) = \Q(x_1^a x_2^a)$ is equal to the field $\Q(x_1, x_2)$ if $\Delta_1 \neq \Delta_2$ and satisfies $[ \Q(x_1, x_2) : \Q(x_3)] \leq 2$ if $\Delta_1 = \Delta_2$. We now distinguish into the two cases: $\Q(x_1) = \Q(x_2) = \Q(x_3)$ and $\Q(x_1) = \Q(x_2) \supsetneq \Q(x_3)$. 

In our proof, we repeatedly use the following fact. Let $L$ be a Galois extension containing $x_1, x_2, x_3$. Then, for any $\sigma \in \Gal(L/\Q)$, we have that
\[ \Big ( \frac{\sigma(x_1) \sigma(x_2)}{\sigma(x_3)} \Big )^a = A\]
and hence
\[ \Big\lvert \frac{\sigma(x_1) \sigma(x_2)}{\sigma(x_3)} \Big\rvert = \Big\lvert \frac{x_1 x_2}{x_3} \Big\rvert = \lvert A \rvert^{1/a}.\]

In Subsections~\ref{subsecA} and \ref{subsecB}, we will show that either we are in case (5) of Theorem~\ref{thm:quot}, or the triple $(\Delta_1, \Delta_2, \Delta_3)$ belongs to a finite list of possible exceptions, which we may find effectively. In Subsection~\ref{subsecC}, we will then show that in fact none of these finitely many possible exceptions can occur. This will complete the proof of Theorem~\ref{thm:quot}.

\subsection{The case where $\Q(x_1) = \Q(x_2) = \Q(x_3)$.}\label{subsecA}

\subsubsection{The subcase where $\Delta_1 = \Delta_2 = \Delta_3$.}

Write $\Delta = \Delta_1 =  \Delta_2 = \Delta_3$ and $h = h_1 = h_2 = h_3$. Note that $h \geq 3$ since the $x_i$ are all distinct. Taking conjugates, we assume that $x_1$ is dominant, and thus obtain, using \eqref{eq:ineq1} and Lemma~\ref{lem:lower}, the lower bound
\[\lvert A \rvert^{1/a} \geq \frac{(e^{\pi \lvert \Delta \rvert^{1/2}}-2079)\min \{4.4 \times 10^{-5}, 3500 \lvert \Delta \rvert^{-3}\}}{e^{\pi \lvert \Delta \rvert^{1/2}/2}+2079}.\]
Conjugating again, we assume that $x_3$ is dominant. We obtain the upper bound
\[\lvert A \rvert^{1/a} \leq \frac{(e^{\pi \lvert \Delta \rvert^{1/2}/2}+2079)^2}{e^{\pi \lvert \Delta \rvert^{1/2}}-2079}.\]
These two bounds are incompatible when $\lvert \Delta \rvert \geq 43$.

\subsubsection{The subcase where $\Q(\sqrt{\Delta_i}) \neq \Q(\sqrt{\Delta_j})$ for some $i, j$.}
Then Lemma~\ref{lem:samefield} implies that $\Delta_1, \Delta_2, \Delta_3$ are all listed in \cite[Table~2]{AllombertBiluMadariaga15}.

\subsubsection{The subcase where $\Delta_1, \Delta_2, \Delta_3$ are not all equal, but $\Q(\sqrt{\Delta_1})= \Q(\sqrt{\Delta_2})= \Q(\sqrt{\Delta_3})$.}\label{ss}

Then Lemma~\ref{lem:samefield} implies that $\Delta_i / \Delta_j \in \{1/4, 1, 4\}$ for all $i, j$. Without loss of generality, assume that $\lvert \Delta_1 \rvert \geq \lvert \Delta_2 \rvert$. Then one of the following must hold:
\begin{enumerate}
	\item $\Delta_1 = \Delta_2 = 4 \Delta_3$,
	\item $4 \Delta_1 = 4 \Delta_2 = \Delta_3$,
	\item $\Delta_1 = 4 \Delta_2 = \Delta_3$,
	\item $\Delta_1 = 4 \Delta_2 = 4 \Delta_3$.
\end{enumerate}

First, suppose that $\Delta_1 = \Delta_2 = 4 \Delta_3$. Write $\Delta = \Delta_3$. Then Lemma~\ref{lem:samefield} implies also that $\Delta \equiv 1 \bmod 8$. Therefore, by Lemma~\ref{lem:dom}, there are no subdominant singular moduli of discriminant $4 \Delta$ and there are precisely two subdominant singular moduli of discriminant $\Delta$, provided $\Delta \notin \{-7, -15\}$. If $\Delta = -7$, then $x_3 \in \Q$, which is ruled out by assumption. Suppose that $\Delta = -15$. Then $h(4 \Delta) = h(\Delta) = 2$. So $x_1, x_2$ are the two roots of $H_{4 \Delta} \in \Z[z]$, and hence $x_1 x_2 \in \Q$. Thus $x_3^a \in \Q$, and so $x_3 \in \Q$ by Proposition~\ref{prop:fieldpower}. So we may assume that $\Delta \notin \{-7, -15\}$.

Since $\Q(x_1) = \Q(x_2) = \Q(x_3)$, each conjugate of $x_i$ occurs precisely once as the $i$th coordinate of a conjugate $(x_1', x_2', x_3')$ of $(x_1, x_2, x_3)$. In particular, for each $i$, there is exactly one conjugate $(x_1', x_2', x_3')$ with $x_i'$ dominant. There is therefore a conjugate $(x_1', x_2', x_3')$ with $x_3'$ not dominant and one of $x_1', x_2'$ dominant. This gives the lower bound
\[ \lvert A \rvert^{1/a} \geq \frac{(e^{2 \pi \lvert \Delta \rvert^{1/2}} - 2079)\min \{4.4 \times 10^{-5}, 3500 \times 4^{-3} \lvert \Delta \rvert^{-3}\}}{e^{\pi \lvert \Delta \rvert^{1/2}/2}+2079}.\]

Suppose for now that the conjugate $(x_1'', x_2'', x_3'')$ with $x_3''$ dominant has one of $x_1'', x_2''$ dominant. Since there are two subdominant singular moduli of discriminant $\Delta$, there is then another conjugate $(x_1''', x_2''', x_3''')$ with $x_3'''$ subdominant and neither $x_1''', x_2'''$ dominant. This conjugate $(x_1''', x_2''', x_3''')$ gives the upper bound
\[ \lvert A \rvert^{1/a} \leq \frac{(e^{2 \pi \lvert \Delta \rvert^{1/2}/3} +2079)^2}{e^{\pi \lvert \Delta \rvert^{1/2}/2} - 2079}.\]
These two bounds are incompatible whenever $\lvert \Delta \rvert \geq 29$.

Now suppose that the conjugate $(x_1'', x_2'', x_3'')$ with $x_3''$ dominant has neither of $x_1'', x_2''$ dominant. Then this conjugate gives the upper bound 
\[ \lvert A \rvert^{1/a} \leq \frac{(e^{2 \pi \lvert \Delta \rvert^{1/2}/3} +2079)^2}{e^{\pi \lvert \Delta \rvert^{1/2}} - 2079}.\]
This is incompatible with the previous lower bound if $\lvert \Delta \rvert \geq 14$. Thus, in either case, we must have that $\lvert \Delta \rvert \leq 28$.

Next suppose that $\Delta_3 = 4 \Delta_1 = 4 \Delta_2$. Write $\Delta = \Delta_1 = \Delta_2$. By Lemma~\ref{lem:samefield}, $\Delta \equiv 1 \bmod 8$. Thus, by Lemma~\ref{lem:dom}, there are no subdominant singular moduli of discriminant $4 \Delta$. Taking conjugates, we may assume that $x_3$ is dominant. Thus,
\[ \lvert A \rvert^{1/a} \leq \frac{(e^{\pi \lvert \Delta \rvert^{1/2}}+2079)(e^{\pi \lvert \Delta \rvert^{1/2}/2}+2079)}{e^{2 \pi \lvert \Delta \rvert^{1/2}} - 2079}.\]
There is also a conjugate $(x_1', x_2', x_3')$ with $x_3'$ not dominant and one of $x_1', x_2'$ dominant. We obtain that
\[ \lvert A \rvert^{1/a} \geq \frac{(e^{\pi \lvert \Delta \rvert^{1/2}}-2079)\min \{4.4 \times 10^{-5}, 3500 \lvert \Delta \rvert^{-3}\}}{e^{2 \pi \lvert \Delta \rvert^{1/2}/3} + 2079}.\]
The two bounds are incompatible whenever $\lvert \Delta \rvert \geq 19$.

Now suppose that $\Delta_1 = 4 \Delta_2 = \Delta_3$. Write $\Delta = \Delta_2$. By Lemma~\ref{lem:samefield}, $\Delta \equiv 1 \bmod 8$ and so, by Lemma~\ref{lem:dom}, there are no subdominant singular moduli of discriminant $4 \Delta$. Assuming that $x_1$ is dominant (and so $x_3$ is neither dominant nor subdominant), we have that
\[\lvert A \rvert^{1/a} \geq \frac{(e^{2 \pi \lvert \Delta \rvert^{1/2}}-2079)\min \{4.4 \times 10^{-5}, 3500 \lvert \Delta \rvert^{-3}\}}{e^{2 \pi \lvert \Delta \rvert^{1/2}/3} + 2079}.\]
Let $(x_1', x_2', x_3')$ be the conjugate such that $x_3'$ is dominant (and so $x_1'$ is neither dominant nor subdominant). Then we have that
\[ \lvert A \rvert^{1/a} \leq \frac{(e^{2 \pi \lvert \Delta \rvert^{1/2}/3}+2079)(e^{\pi \lvert \Delta \rvert^{1/2}}+2079)}{e^{2 \pi \lvert \Delta \rvert^{1/2}} - 2079}. \]
These bounds are incompatible whenever $\lvert \Delta \rvert \geq 8$.

Finally, suppose that $\Delta_1 = 4 \Delta_2 = 4 \Delta_3$.  Write $\Delta = \Delta_2 = \Delta_3$. By Lemma~\ref{lem:samefield}, $\Delta \equiv 1 \bmod 8$. Since $x_2 \notin \Q$, we have that $\Delta \neq -7$. Suppose that $\Delta = -15$. Then $h(4 \Delta) = h(\Delta) = 2$. The unique non-trivial Galois conjugate of $(x_1, x_2, x_3)$ is $(x_1', x_3, x_2)$, where $x_1'$ is the unique non-trivial conjugate of $x_1$. We thus have that $x_1^a = (x_1')^a$, which contradicts Proposition~\ref{prop:dom}. So $\Delta \neq -15$. So, by Lemma~\ref{lem:dom}, there are no subdominant singular moduli of discriminant $4 \Delta$, and there are precisely two subdominant singular moduli of discriminant $\Delta$.  

We may assume that $(x_1, x_2, x_3)$ has $x_3$ dominant (and so $x_2$ not dominant). Suppose for now that $x_1$ is not dominant. Then we have the upper bound
\[ \lvert A \rvert^{1/a} \leq \frac{(e^{2 \pi \lvert \Delta \rvert^{1/2}/3}+2079)(e^{\pi \lvert \Delta \rvert^{1/2}/2}+2079)}{e^{\pi \lvert \Delta \rvert^{1/2}} - 2079}.\]
And taking the conjugate $(x_1', x_2', x_3')$ with $x_1'$ dominant (and hence $x_3'$ not dominant since $(x_1', x_2', x_3') \neq (x_1, x_2, x_3)$), we have that
\[ \lvert A \rvert^{1/a} \geq \frac{(e^{2 \pi \lvert \Delta \rvert^{1/2}}-2079)\min \{4.4 \times 10^{-5}, 3500 \lvert \Delta \rvert^{-3}\}}{e^{ \pi \lvert \Delta \rvert^{1/2}/2} + 2079}.\]
These are incompatible when $\lvert \Delta \rvert \geq 13$. 

Now suppose that $x_1$ is dominant. Then $(x_1, x_2, x_3)$ has both $x_1, x_3$ dominant and gives the lower bound
\[ \lvert A \rvert^{1/a} \geq \frac{(e^{2 \pi \lvert \Delta \rvert^{1/2}}-2079)\min \{4.4 \times 10^{-5}, 3500 \lvert \Delta \rvert^{-3}\}}{e^{ \pi \lvert \Delta \rvert^{1/2}} + 2079}.\]
In this case, there exists a conjugate $(x_1'', x_2'', x_3'')$ with $x_3''$ subdominant, $x_1''$ not dominant, and $x_2''$ not dominant. Such a conjugate exists since there are two subdominant singular moduli of discriminant $\Delta$, so there are two conjugates of $(x_1, x_2, x_3)$ which have their third coordinate subdominant, and neither of these has its first conjugate dominant and at most one of these has its second coordinate dominant. This conjugate $(x_1'', x_2'', x_3'')$ gives rise to the bound
\[\lvert A \rvert^{1/a} \leq  \frac{(e^{2 \pi \lvert \Delta \rvert^{1/2}/3}+2079)(e^{\pi \lvert \Delta \rvert^{1/2}/2}+2079)}{e^{\pi \lvert \Delta \rvert^{1/2}/2} - 2079}.\]
These two bounds are incompatible when $\lvert \Delta \rvert \geq 92$. Thus, in either case, we obtain that $\lvert \Delta \rvert \leq 91$.

\subsection{The case where $\Q(x_1) = \Q(x_2) \supsetneq \Q(x_3)$.}\label{subsecB}

By the discussion at the start of the proof, we must have that $\Delta_1 = \Delta_2$ in this case.

\subsubsection{The subcase where $\Q(\sqrt{\Delta_1}) \neq \Q(\sqrt{\Delta_3})$.}

Suppose that $\Q(\sqrt{\Delta_1}) \neq \Q(\sqrt{\Delta_3})$. Then, by Lemma~\ref{lem:subfielddiffund}, either at least one of $\Delta_1, \Delta_3$ is listed in \cite[Table~1]{AllombertBiluMadariaga15} or some $h_i \geq 128$ and the corresponding field $\Q(x_i)$ is Galois. In the first case, we may find all the possible $(\Delta_1, \Delta_2, \Delta_3)$ using a PARI script. So suppose that neither of $\Delta_1, \Delta_3$ is listed. 

If $\Q(x_1)$ is Galois, then $\Q(x_3)$ is also Galois, and hence at least one of $\Delta_1, \Delta_3$ is listed in \cite[Table~1]{AllombertBiluMadariaga15} by \cite[Corollaries~2.2 \& 3.3]{AllombertBiluMadariaga15} since $\Q(\sqrt{\Delta_1}) \neq \Q(\sqrt{\Delta_3})$. We are thus actually in the first case again. So we may assume that $\Q(x_1)$ is not Galois, and hence $\Q(x_3)$ is Galois and $h(\Delta_3) \geq 128$. By Lemma~\ref{lem:tat}, we thus have that $\Q(\sqrt{\Delta_3})= K_*$, where $K_*$ is the exceptional field defined in Definition~\ref{def:tat}. We are thus in the exceptional case (5) of Theorem~\ref{thm:quot}.

\subsubsection{The subcase where $\Q(\sqrt{\Delta_1}) = \Q(\sqrt{\Delta_3})$.}

Suppose that $\Q(\sqrt{\Delta_1}) = \Q(\sqrt{\Delta_3})$. Then, by Lemma~\ref{lem:subfieldsamefund}, we have that $\Delta_1 \in \{9 \Delta_3 / 4, 4 \Delta_3, 9 \Delta_3, 16 \Delta_3\}$. Write $\Delta = \Delta_3$.

We will consider each of these four cases in turn. Our approach, which we now explain, will be the same for all of them. We may assume that $x_1$ is dominant, in order to obtain a lower bound for $\lvert A \rvert^{1/a}$. We will then show that if $h_3$ (equivalently, $h_1$) is sufficiently large, then there exists a conjugate $(x_1', x_2', x_3')$ of $(x_1, x_2, x_3)$ which gives rise to an upper bound for $\lvert A \rvert^{1/a}$ that is incompatible with the obtained lower bound for large enough values of $\lvert \Delta \rvert$. Thus, either $h_1$ is bounded, or $\lvert \Delta \rvert$ is bounded. In particular, there are only finitely many possibilities for $\Delta$, and we may find these.

In Subsection~\ref{subsecC}, we will eliminate each of these finitely many possibilities for $\Delta$ using a PARI script. The time taken to eliminate a possible value of $\Delta$ increases rapidly with $h_3 =h(\Delta)$. It is therefore computationally efficient to reduce the number of possible $\Delta$ with large class number as much as possible. 

For this reason, in each case we will obtain not one, but several, upper bounds for $\lvert A \rvert^{1/a}$, each of which is valid for a different range of $h_3$. Crucially, the implied bound on $\lvert \Delta \rvert$ becomes much sharper as $h_3$ increases. This allows us to reduce considerably the number of possible $\Delta$ with large class number, and so significantly reduces the computational task which faces us in Subsection~\ref{subsecC}. In particular, the final bound on $\lvert \Delta \rvert$ which we obtain in each case will be sufficiently sharp to rule out there being any $\Delta$ with a class number so large as to be computationally infeasible to handle.  Now we begin with the four cases themselves.

First, suppose that $\Delta_1 = \Delta_2 = 9 \Delta / 4$. Assuming that $x_1$ is dominant, we have that
\[\lvert A \rvert^{1/a} \geq \frac{(e^{3 \pi \lvert \Delta \rvert^{1/2}/2}-2079)\min \{4.4 \times 10^{-5}, 3500 \times (\frac{9}{4})^{-3}\lvert \Delta \rvert^{-3}\}}{e^{ \pi \lvert \Delta \rvert^{1/2}} + 2079}.\]
Let $k, m_1, m_2$ be as given in Table~\ref{tbl:9/4}. If $h_1 \geq k$, then we may (Lemma~\ref{lem:dom}) find a conjugate $(x_1', x_2', x_3')$ where the associated triples $(a_i', b_i', c_i') \in T_{\Delta_i}$ satisfy $a_1' \geq m_1$ and $a_2' \geq m_2$. This conjugate gives rise to the upper bound 
\[\lvert A \rvert^{1/a} \leq  \frac{(e^{3 \pi \lvert \Delta \rvert^{1/2}/2 m_1}+2079)(e^{3 \pi \lvert \Delta \rvert^{1/2}/2 m_2}+2079)}{\min \{4.4 \times 10^{-5}, 3500 \lvert \Delta \rvert^{-3}\}}.\]
These upper bounds are incompatible with the above lower bound for sufficiently large values of $\lvert \Delta \rvert$ (which are also recorded in Table~\ref{tbl:9/4}). Observe that the greater $h_1$ is, the sharper the upper bound on $\lvert \Delta \rvert$ we obtain is. We thus obtain that either $h_1 \leq 29$ (and so $h_3 \leq 14$) or $\lvert \Delta \rvert$ is bounded as in Table~\ref{tbl:9/4}. 

\begin{table}
	\caption{Conjugates when $\Delta_1 = \Delta_2 = 9 \Delta /4$.}
	\begin{tabular}{  c | c | c | c }
		
		$k$ & $m_1$ & $m_2$ & $\lvert \Delta \rvert \leq$ \\ \hline
		$29$ & $7$ & $8$ & $22443$\\
		$31$ & $8$ & $8$ & $11576$\\
		$35$ & $8$ & $9$ & $7484$\\
		$39$ & $9$ & $9$ & $5076$\\
		$42$ & $9$ & $10$ & $3820$\\
		$45$ & $10$ & $10$ & $2929$\\
		$49$ & $10$ & $11$ & $2384$\\
		$53$ & $11$ & $11$ & $1957$\\
		$55$ & $11$ & $12$ & $1669$\\
		$57$ & $12$ & $12$ & $1430$\\
	\end{tabular}
	\label{tbl:9/4}
\end{table}

Second, suppose that $\Delta_1 = \Delta_2 = 4 \Delta$. As in \cite[\S3.2.2]{BiluLucaMadariaga16}, we note that, for $m \in \Z_{>0}$, the class number formula \cite[Corollary~7.28]{Cox89} implies that
\[h(m^2 \Delta) = m \prod_{p \mid m} (1 - \frac{1}{p}\Big(\frac{\Delta}{p}\Big) ) h (\Delta),\]
where $(\Delta / \cdot)$ is the Kronecker symbol. Since $h_1 = 2 h_2$, we thus obtain that $(\Delta/2)=0$. In particular, $4 \Delta \equiv 0 \bmod 16$, and so there are no subdominant singular moduli of discriminant $4 \Delta$ by Lemma~\ref{lem:dom}. Assuming $x_1$ is dominant, we have that 
\[\lvert A \rvert^{1/a} \geq \frac{(e^{2 \pi \lvert \Delta \rvert^{1/2}}-2079)\min \{4.4 \times 10^{-5}, 3500 \times 4^{-3}\lvert \Delta \rvert^{-3}\}}{e^{ \pi \lvert \Delta \rvert^{1/2}} + 2079}.\]
Now let $k, m_1, m_2$ be as given in Table~\ref{tbl:4}. As before, provided that $h_1 \geq k$, we may find a conjugate $(x_1', x_2', x_3')$, where $x_i'$ has associated $(a_i', b_i', c_i') \in T_{\Delta_i}$, such that $a_1' \geq m_1$ and $a_2' \geq m_2$. Such a conjugate gives rise to the upper bound 
\[\lvert A \rvert^{1/a} \leq  \frac{(e^{2 \pi \lvert \Delta \rvert^{1/2}/m_1 }+2079)(e^{2 \pi \lvert \Delta \rvert^{1/2}/ m_2}+2079)}{\min \{4.4 \times 10^{-5}, 3500 \lvert \Delta \rvert^{-3}\}}.\]
We thus obtain that either $h_1 \leq 10$ (and so $h_3 \leq 5$) or $\lvert \Delta \rvert$ is bounded as in Table~\ref{tbl:4}.

\begin{table}
	\caption{Conjugates when $\Delta_1 = \Delta_2 = 4 \Delta$.}
	\begin{tabular}{  c | c | c | c }
		
		$k$ & $m_1$ & $m_2$ & $\lvert \Delta \rvert \leq$ \\ \hline
		$11$ & $5$ & $5$ & $3397$\\
		$13$ & $5$ & $6$ & $1393$\\
		$15$ & $6$ & $6$ & $650$\\
		$19$ & $6$ & $7$ & $403$\\
		$23$ & $7$ & $7$ & $293$\\
		$25$ & $7$ & $8$ & $236$\\
		$27$ & $8$ & $8$ & $194$\\
		
	\end{tabular}
	\label{tbl:4}
\end{table}

Third, suppose that $\Delta_1 = \Delta_2 = 9 \Delta$. Assuming that $x_1$ is dominant, we have that 
\[\lvert A \rvert^{1/a} \geq \frac{(e^{3 \pi \lvert \Delta \rvert^{1/2}}-2079)\min \{4.4 \times 10^{-5}, 3500 \times 9^{-3}\lvert \Delta \rvert^{-3}\}}{e^{ \pi \lvert \Delta \rvert^{1/2}} + 2079}.\]
Let $k, m_1, m_2$ be as given in Table~\ref{tbl:9}. Provided $h_1 \geq k$, we may as usual find a conjugate $(x_1', x_2', x_3')$, where $x_i'$ has associated $(a_i', b_i', c_i') \in T_{\Delta_i}$ with $a_1' \geq m_1$ and $a_2' \geq m_2$. This conjugate gives rise to the upper bound 
\[\lvert A \rvert^{1/a} \leq  \frac{(e^{3 \pi \lvert \Delta \rvert^{1/2}/m_1 }+2079)(e^{3 \pi \lvert \Delta \rvert^{1/2}/ m_2}+2079)}{\min \{4.4 \times 10^{-5}, 3500 \lvert \Delta \rvert^{-3}\}}.\]
We thus obtain that either $h_1 \leq 8$ (and so $h_3 \leq 4$) or $\lvert \Delta \rvert$ is bounded as in Table~\ref{tbl:9}.

\begin{table}
	\caption{Conjugates when $\Delta_1 = \Delta_2 = 9 \Delta $.}
	\begin{tabular}{  c | c | c | c }
		
		$k$ & $m_1$ & $m_2$ & $\lvert \Delta \rvert \leq$ \\ \hline
		$9$ & $3$ & $4$ & $2131$\\
		$11$ & $4$ & $4$ & $255$\\
		$13$ & $4$ & $5$ & $126$\\
		$15$ & $5$ & $5$ & $71$\\			
	\end{tabular}
	\label{tbl:9}
\end{table}

Finally, suppose that $\Delta_1 = \Delta_2 = 16 \Delta$. In this case, the class number formula implies that
\[h(16 \Delta) = 4 (1- \frac{1}{2}\Big(\frac{\Delta}{2}\Big))h(\Delta).\]
Since $h_1 = 2 h_3$, we obtain that $(\Delta/2)=1$ and so $\Delta \equiv 1 \bmod 8$. In particular, $16 \Delta \equiv 0 \bmod 16$, and hence there are no subdominant singular moduli of discriminant $16 \Delta$ by Lemma~\ref{lem:dom}.
Assuming that $x_1$ is dominant, we have that
\[\lvert A \rvert^{1/a} \geq \frac{(e^{4 \pi \lvert \Delta \rvert^{1/2}}-2079)\min \{4.4 \times 10^{-5}, 3500 \times 16^{-3}\lvert \Delta \rvert^{-3}\}}{e^{ \pi \lvert \Delta \rvert^{1/2}} + 2079}.\]
Let $k, m_1, m_2$ be as given in Table~\ref{tbl:16}. Provided $h_1 \geq k$, we may also find a conjugate $(x_1', x_2', x_3')$, where $x_i'$ has associated $(a_i', b_i', c_i') \in T_{\Delta_i}$ with $a_1' \geq m_1$ and $a_2' \geq m_2$. Such a conjugate gives rise to the upper bound 
\[\lvert A \rvert^{1/a} \leq  \frac{(e^{4 \pi \lvert \Delta \rvert^{1/2}/m_1 }+2079)(e^{4 \pi \lvert \Delta \rvert^{1/2}/ m_2}+2079)}{\min \{4.4 \times 10^{-5}, 3500 \lvert \Delta \rvert^{-3}\}}.\]
We thus obtain that either $h_1 \leq 4$ (and so $h_3 = 2$) or $\lvert \Delta \rvert$ is bounded as in Table~\ref{tbl:16}.

\begin{table}
	\caption{Conjugates when $\Delta_1 = \Delta_2 = 16 \Delta $.}
	\begin{tabular}{  c | c | c | c }
		
		$k$ & $m_1$ & $m_2$ & $\lvert \Delta \rvert \leq$ \\ \hline
		$5$ & $3$ & $4$ & $143$\\
		$7$ & $4$ & $4$ & $48$\\
		$9$ & $4$ & $5$ & $28$		
	\end{tabular}
	\label{tbl:16}
\end{table}

\subsection{Eliminating the exceptional cases}\label{subsecC}

If we are not in one of cases (1)--(5) of Theorem~\ref{thm:quot}, then our arguments in the previous two subsections have reduced the possibilities for $(\Delta_1, \Delta_2, \Delta_3)$ to a known finite list. We may eliminate all the entries on this list using a PARI script. The obvious modification of the algorithm described in Subsection~\ref{subsec:elimnontriv} may be used to do this. 

A slight technical difficulty arises in the case where $\Q(x_1) = \Q(x_2) \supsetneq \Q(x_3)$ and $\Delta_1 = \Delta_2 = 9 \Delta_3 /4$. Here PARI finds 30 possibilities for $(\Delta_1, \Delta_2, \Delta_3)$, of which $19$ have $h_1 \geq 30$. The run time of our usual algorithm increases with $h_1$, and it is therefore not practical to use this algorithm to check triples $(\Delta_1, \Delta_2, \Delta_3)$ with $h_1$ greater than around 30 (at least with modest computational resources).  

In this case, we therefore first filter these 30 triples with the following process. Fix such a triple $(\Delta_1, \Delta_2, \Delta_3)$. (Note that $\Delta_1 = \Delta_2$.) The singular moduli of discriminant $\Delta_1$ may be enumerated
\[ \{y_1, \ldots, y_{h_3}, y_{h_3 + 1}, \ldots, y_{2 h_3}\},\]
where $\lvert y_i \rvert \geq \lvert y_{i+1} \rvert$ for all $i$ (in particular, $y_1$ is dominant). We  may also enumerate the singular moduli of discriminant $\Delta_3$ as
\[\{w_1, \ldots, w_{h_3}\},\]
where $\lvert w_i \rvert \geq \lvert w_{i+1} \rvert$ for all $i$. This is straightforward in PARI.

Suppose then that $x_1, x_2, x_3$ is a pairwise distinct choice of singular moduli of respective discriminants $\Delta_1, \Delta_2, \Delta_3$. It must always be possible to find a conjugate $(\sigma(x_1), \sigma(x_2))$ of $(x_1, x_2)$ such that $(\sigma(x_1), \sigma(x_2)) = (y_i, y_j)$ with $i, j \geq h_3$ and $i \neq j$. We thus must have that
\[ \lvert A \rvert^{1/a} \leq \frac{\lvert y_{h_3} \rvert \lvert y_{h_3 + 1} \rvert}{\lvert w_{h_3} \rvert}.\]
We may also find a conjugate $(\sigma_0(x_1), \sigma_0(x_2))$ with $\sigma_0(x_1) = y_1$. Hence,
\[ \lvert A \rvert^{1/a} \geq \frac{\lvert y_{1} \rvert \lvert y_{2 h_3} \rvert}{\lvert w_{1} \rvert}.\]
We thus obtain upper and lower bounds for $\lvert A \rvert^{1/a}$. Note that we may obtain the same bounds for any choice of $x_1, x_2, x_3$ as pairwise distinct singular moduli of respective discriminants $\Delta_1, \Delta_2, \Delta_3$. Thus, if these upper and lower bounds for $\lvert A \rvert^{1/a}$ are incompatible, then we may reject the triple $(\Delta_1, \Delta_2, \Delta_3)$.

Implementing this approach, we are able to reject all but one of the 30 triples $(\Delta_1, \Delta_2, \Delta_3)$. The single triple $(\Delta_1, \Delta_2, \Delta_3)$ we cannot reject using this algorithm has $h_1 = 6$. This triple though has $h_1$ sufficiently small to be eliminated by the usual algorithm. The proof of Theorem~\ref{thm:quot} is thus completed in this way.

\bibliographystyle{amsplain}

\end{document}